\theoremstyle{plain}
\newtheorem{theorem}{Theorem}[section]
\newtheorem{corollary}[theorem]{Corollary}
\newtheorem{lemma}[theorem]{Lemma}
\newtheorem{proposition}[theorem]{Proposition}
\theoremstyle{definition}
\newtheorem{definition}[theorem]{Definition}
\theoremstyle{remark}
\newtheorem{remark}[theorem]{Remark}
\newtheorem{example}[theorem]{Example}
\newcommand{\N}{\mathbb{N}}
\newcommand{\Z}{\mathbb{Z}}
\newcommand{\R}{\mathbb{R}}
\newcommand{\C}{\mathbb{C}}
\newcommand{\T}{\mathbb{T}}
\newcommand{\D}{\mathbb{D}}
\newcommand{\SSS}{\mathbb{S}}
\newcommand{\K}{\mathcal{K}}
\newcommand{\A}{\mathrm{A}}
\newcommand{\AIII}{\mathrm{A\hspace{-.05em}I\hspace{-.05em}I\hspace{-.05em}I}}
\newcommand{\AI}{\mathrm{A\hspace{-.05em}I}}
\newcommand{\BDI}{\mathrm{BDI}}
\newcommand{\DD}{\mathrm{D}}
\newcommand{\DIII}{\mathrm{D\hspace{-.05em}I\hspace{-.05em}I\hspace{-.05em}I}}
\newcommand{\AII}{\mathrm{A\hspace{-.05em}I\hspace{-.05em}I}}
\newcommand{\CII}{\mathrm{C\hspace{-.05em}I\hspace{-.05em}I}}
\newcommand{\CC}{\mathrm{C}}
\newcommand{\CI}{\mathrm{CI}}
\newcommand{\I}{\mathcal{I}}
\newcommand{\cpt}{\mathrm{cpt}}
\newcommand{\pt}{\mathrm{pt}}
\newcommand{\id}{\mathrm{id}}
\newcommand{\sa}{\mathrm{sa}}
\newcommand{\ska}{\mathrm{sk}}
\newcommand{\diag}{\mathrm{diag}}
\newcommand{\CA}{$C^*$-algebra}
\newcommand{\TA}{$C^{*, \tau}$-algebra}
\DeclareMathOperator{\ind}{\mathrm{index}}
\DeclareMathOperator{\Ker}{\mathrm{Ker}}
\DeclareMathOperator{\Ad}{\mathrm{Ad}}
\newcommand{\tpi}{\pi'}
\newcommand{\HH}{\mathcal{H}}
\newcommand{\HHz}{\HH^{0}}
\newcommand{\HHi}{\HH^{\infty}}
\newcommand{\HHzi}{\HH^{0,\infty}}
\newcommand{\TT}{\mathcal{T}}
\newcommand{\TTz}{\TT^{0}}
\newcommand{\TTi}{\TT^{\infty}}
\newcommand{\TTzi}{\TT^{0,\infty}}
\newcommand{\Szi}{\mathcal{S}^{0,\infty}}
\newcommand{\sigmaz}{\sigma^0}
\newcommand{\sigmai}{\sigma^\infty}
\newcommand{\Pz}{P^0}
\newcommand{\PI}{P^{\infty}}
\newcommand{\Pzi}{P^{0,\infty}}
\newcommand{\bz}{{\bm z}}
\newcommand{\fc}{\mathfrak{c}}
\newcommand{\fC}{\mathcal{C}}
\newcommand{\fR}{\mathcal{R}}
\newcommand{\fr}{\mathfrak{r}}
\newcommand{\fJ}{\mathcal{J}}
\newcommand{\fs}{\mathfrak{s}}
\newcommand{\fe}{\mathfrak{e}}
\newcommand{\ft}{\mathfrak{t}}
\begin{document}

\title[Quarter-Plane Toeplitz Operators via Extended Symbol]{An Index Theorem for Quarter-Plane Toeplitz Operators via Extended Symbols and Gapped Invariants Related to Corner States}
\author[S. Hayashi]{Shin Hayashi}
\address{Japan Science and Technology Agency, 4-1-8 Honcho, Kawauchi, Saitama 332-0012 Japan}
\address{Advanced Institute for Materials Research, Tohoku University, 2-1-1 Katahira, Aoba, Sendai 980-8577 Japan}
\address{Mathematics for Advanced Materials - Open Innovation Laboratory, National Institute of Advanced Industrial Science and Technology, 2-1-1 Katahira, Aoba, Sendai 980-8577, Japan}
\email{{\tt shin.hayashi.e2@tohoku.ac.jp}}

\subjclass[2020]{Primary 19K56; Secondary 15A23, 47B35, 81V99.}
\keywords{Quarter-plane Toeplitz operator, matrix factorization, topological corner state, $K$-theory and index theory}

\begin{abstract}
In this paper, we discuss index theory for Toeplitz operators on a discrete quarter-plane of two-variable rational matrix function symbols.
By using Gohberg--Kre{\u \i}n theory for matrix factorizations, we extend the symbols defined originally on a two-dimensional torus to some three-dimensional sphere and derive a formula to express their Fredholm indices through extended symbols.
Variants for families of (self-adjoint) Fredholm quarter-plane Toeplitz operators and those preserving real structures are also included.
For some bulk-edge gapped single-particle Hamiltonians of finite hopping range on a discrete lattice with a codimension-two right angle corner, topological invariants related to corner states are provided through extensions of bulk Hamiltonians.
\end{abstract}

\maketitle
\setcounter{tocdepth}{2}
\tableofcontents

\section{Introduction}
\label{Sect.1}
Topological corner states took much interest in condensed matter physics as a characteristic of higher-order topological insulators.
Aimed at studies of topological corner states, we discuss index theory for some Toeplitz operators on a discrete quarter-plane.
Index theory for quarter-plane Toeplitz operators has been investigated by Simonenko, Douglas--Howe \cite{Sim67, DH71}, and a necessary and sufficient condition for these operators to be Fredholm is obtained in terms of the invertibility of two associated half-plane Toeplitz operators.
Index formulas for Fredholm quarter-plane Toeplitz operators are obtained by Coburn--Douglas--Singer, Dudu\v{c}ava, Park \cite{CDS72, Dud77, Pa90}.
Coburn--Douglas--Singer derived their formula by showing that there is a deformation to some quarter-plane Toeplitz operators of a standard form preserving Fredholm indices \cite{CDS72}.
Dudu\v{c}ava employed Gohberg--Kre{\u \i}n theory for the factorization of some matrix functions on a circle \cite{GK58r, Subin67, CG81} and obtained a formula by using a construction of parametrix \cite{Dud77}.
Park obtained an index formula by a construction of a cyclic cocycle and using a pairing between $K$-theory and cyclic cohomology \cite{Pa90}.

A characteristic feature of topological insulators is the existence of topological edge states.
Although the bulk is gapped (insulating), edge states exist that account for the metallic properties of the boundary of the system.
This appearance of edge states is known to originate from a topological invariant for the gapped bulk, called the {\em bulk-edge correspondence}.
A typical example is the integer quantum Hall system in which the bulk topological invariant is known to be the first Chern number of a complex vector bundle (Bloch bundle) over a two-dimensional torus (Brillouin torus).
Bellissard investigated the quantum Hall effect through noncommutative geometry \cite{BvES94}, and Kellendonk--Richter--Schulz-Baldes gave a proof of the bulk-edge correspondence based on index theory for Toeplitz operators \cite{KRSB02}.
$K$-theory was employed for the classification of topological insulators \cite{Kit09,FM13} (see also \cite{PSB16} and the references therein).
We note that matrix factorizations is also used in recent physical studies of (first-order) topological insulators \cite{Alase}.
For {\em higher-order topological insulators} \cite{BBH17a, Schindler18}, an actively studied topic in condensed matter physics, the bulk-edge correspondence is much generalized to include corner states.
For two-dimensional second-order topological insulators, for example, the bulk and two edges whose intersection form a codimension-two corner are gapped, though there exist topological corner states, and a relation between some gapped topology and corner states are much discussed.

In \cite{Hayashi2}, a mathematical approach to topological corner states is proposed based on index theory for quarter-plane Toeplitz operators, where topological invariants for bulk-edge gapped Hamiltonians are defined as elements of a $K$-group of some \CA \ and its relation with hinge states is proved.
Although this shows a relation between some gapped topology and corner states, gapped invariants are defined abstractly and much more geometric understanding is required, in order both for investigation from the physical point of view and its computation.
For this purpose, we investigate further index theory for quarter-plane Toeplitz operators,
especially Dudu\v{c}ava's idea of using matrix factorizations \cite{Dud77} from a topological point of view.
We consider Fredholm quarter-plane Toeplitz operators of two-variable rational matrix function symbols.
For each of them, there associates two invertible half-plane Toeplitz operators having the same symbol.
In Sect.~$3$, we investigate the geometric implications of this invertibility condition.
Through the Fourier transform in a direction parallel to the boundary, a half-plane Toeplitz operator corresponds to a one-parameter family of Toeplitz operators, and the problem reduces to a study of invertible Toeplitz operators.
For an invertible Toeplitz operator of a rational matrix function symbol, Gohberg--Kre{\u \i}n theory states that there is a decomposition of the symbol as a product of two matrix-valued functions such that each factor of the decomposition can be analytically continued to a disk.
By using analytic continuation, we see that the symbol of the quarter-plane Toeplitz operator defined originally on a two-dimensional torus can be extended as a continuous nonsingular matrix-valued function over some three sphere.
This extension is shown to be independent of the choice of the factorization, therefore is canonically associated with our operator.
We then show in Sect.~$4$ that the Fredholm index of the quarter-plane Toeplitz operator is given through the three-dimensional winding number of the extended symbol (Corollary~\ref{maincorollary}).
Note that a part of its proof is based on Coburn--Douglas--Singer's idea \cite{CDS72}.
Our formula can be extended to Fredholm quarter-plane Toeplitz operators which are self-adjoint, preserving real structures and families of them, and these variants are proved in a parallel way.
In this paper, we mainly discuss families of (self-adjoint) Fredholm quarter-plane Toeplitz operators for which we use complex $K$-theory (Theorem~\ref{mainfamily}), and the results for those operators preserving real structures are contained in Sect.~$5$.
Necessary results about quarter-plane Toeplitz operators and Gohberg--Kre{\u \i}n theory for matrix factorizations used in this paper are collected in Sect.~$2$.

Applications to topological corner states are discussed in Sect.~$6$.
We consider translation invariant single-particle Hamiltonians of finite hopping range on the lattice $\Z^n$ in each of the ten Altland--Zirnbauer classes \cite{AZ97}, and discuss its restrictions onto $(\Z_{\geq 0})^2 \times \Z^{n-2}$ assuming the Dirichlet boundary condition.
When the bulk Hamiltonian and its compressions onto two half-spaces $\Z \times \Z_{\geq 0} \times \Z^{n-2}$ and $\Z_{\geq 0} \times \Z \times \Z^{n-2}$ are gapped, we associates a nonsingular matrix function (over some three sphere for two-dimensional systems) through the matrix factorization which is an extension of the bulk Hamiltonian.
We define a topological invariant for such a bulk-edge gapped Hamiltonian as a $K$-class in some topological $K$-theory group of this extended bulk Hamiltonian (Definition~\ref{def6.1}).
A relation between this gapped topological invariant and corner/hinge states is given in Theorem~\ref{thm6.2}, which provides a geometric formulation for the relation between the abstractly defined gapped topological invariant and corner states in \cite{Hayashi2}.
In order to construct extensions of bulk Hamiltonians, we need to take matrix factorizations.
For matrix factorizations of rational matrix functions on the unit circle in the complex plane, an algorithm is known \cite{GK58r, CG81, GKS03} and the finite hopping range condition is assumed correspondingly.
In \cite{Hayashi4}, a classification of topological invariants related to corner states in each of the Altland--Zirnbauer classes are proposed based on index theory, where Boersema--Loring's formulation of $KO$-theory for real \CA s \cite{BL16} is employed.
Since topological corner states are one motivation of this work, some parts of the discussions in this paper are organized in this framework.
For example, the real symmetries discussed in Sect.~\ref{Sect.5} are taken from Boersema--Loring's picture.
Note that the integration formula for our gapped topological invariants, like the integration of the Berry curvature for the first Chern number, is still missing since our three sphere is not smooth, although our formulation provides a way to understand gapped topological invariants related to corner states in a geometric way.
For example, that for a two-dimensional class AIII system is given by the three-dimensional winding number of the extension of the bulk Hamiltonian (Example~\ref{2DAIII}) and that for a three-dimensional class A system is provided as a topological invariant for an extension of the Bloch bundle (Example~\ref{3DA}).

\section{Preliminaries}
\label{Sect.2}
In this section, we collect the necessary results and notations used in this paper.

\subsection{Quarter-Plane Toeplitz operators}
\label{Sect.2.1}
Let $\T$ be the unit circle in the complex plane equipped with the normalized Haar measure.
For $f \in C(\T^n)$, we write $M_f$ for the bounded linear operator on $l^2(\Z^n)$ corresponding to the multiplication operator on $L^2(\T^n)$ generated by $f$ through the Fourier transform $L^2(\T^n) \cong l^2(\Z^n)$.
For an integer $k$, we write $\Z_{\geq k}$ for the set of integers greater than or equal to $k$.
Let ${\bm \delta_n}$ be the characteristic function of a point $n \in \Z$ and let $l^2(\Z_{\geq 0})$ be the closed subspace of $l^2(\Z)$ spanned by $\{ {\bm \delta_n} \mid n \geq 0 \}$.
Let $P$ be the orthogonal projection of $l^2(\Z)$ onto $l^2(\Z_{\geq 0})$.
For $f \in C(\T)$, the operator on $l^2(\Z_{\geq 0})$ defined by $T_f \varphi = P M_f \varphi$ for $\varphi \in l^2(\Z_{\geq 0})$ is called the {\em Toeplitz operator} of continuous symbol $f$.
Let $\TT$ be the \CA \ generated by those Toeplitz operators.
We have the following Toeplitz extension:
\begin{equation}\label{tExt}
	0 \to \K \to \TT \overset{\sigma}{\to} C(\T) \to 0,
\end{equation}
where $\sigma$ is a $*$-homomorphism that maps $T_f$ to its symbol $f$.

Let ${\bm \delta_{m,n}}$ be the characteristic function of the point $(m,n)$ in $\Z^2$, and let $\HHz$, $\HHi$ and $\HHzi$ be closed subspaces of $l^2(\Z^2)$ spanned by
$\{ {\bm \delta_{m,n}} \mid n \geq 0 \}$,
$\{ {\bm \delta_{m,n}} \mid m \geq 0 \}$, and
$\{ {\bm \delta_{m,n}} \mid m \geq 0 \ \text{and}\ n\geq 0 \}$, respectively.
Let $\Pz$, $\PI$ and $\Pzi$ be the orthogonal projection of $l^2(\Z^2)$ onto $\HHz$, $\HHi$ and $\HHzi$, respectively.
Note that $\HHzi = \HHz \cap \HHi$ and $\Pzi = \Pz \PI = \PI \Pz$.
For $f \in C(\T^2)$, the operators $T^0_f$ on $\HHz$ and $T^\infty_f$ on $\HHi$ defined by 
 $T^0_f \varphi = \Pz M_f \varphi$ for $\varphi \in \HHz$ and $ T^\infty_f \varphi= \PI M_f \varphi$ for $\varphi \in \HHi$, respectively, are called {\em half-plane Toeplitz operators}.
The operator on $\HHzi$ defined by $T^{0,\infty}_f \varphi = \Pzi M_f \varphi$ for $\varphi \in \HHzi$ is called the {\em quarter-plane Toeplitz operator}.
Let $\TTz$ and $\TTi$ be the \CA s generated by half-plane Toeplitz operators of the form $T^0_f$ and $T^\infty_f$, respectively, and let $\TTzi$ be the \CA \ generated by quarter-plane Toeplitz operators.
Note that $\TTz \cong C(\T) \bm{\otimes} \TT$ and $\TTi \cong \TT \bm{\otimes} C(\T)$ by the Fourier transform in a direction parallel to the boundary of half-planes.
Corresponding to these isomorphisms, let $\sigmaz = 1_{C(\T)} \bm{\otimes} \sigma$ and $\sigmai = \sigma \bm{\otimes} 1_{C(\T)}$ which are $*$-homomorphisms from $\TTz$ and $\TTi$ to $C(\T^2)$.
Let $\Szi$ be the pullback \CA \ of these two $*$-homomorphisms,
\begin{equation*}
\vcenter{
\xymatrix{
\Szi \ar[r]^{p^\infty} \ar[d]_{p^0} & \TT^\infty \ar[d]^{\sigma^\infty} \\
\TT^0 \ar[r]^{\sigma^0} & C(\T^2)}}
\end{equation*}
and let $\sigma_\mathcal{S} = \sigma^0 \circ p^0 = \sigma^\infty \circ p^\infty$.
The following short exact sequence of \CA s is known \cite{Pa90}:
\begin{equation}\label{tqExt}
	0 \to \K \to \TTzi \overset{\gamma}{\to} \Szi \to 0,
\end{equation}
where $\K$ is the compact operator algebra, the map $\K \to \TTzi$ is the inclusion, and $\gamma$ is a $*$-homomorphism that maps $T^{0,\infty}_f$ to $(T^0_f, T^\infty_f)$ for $f \in C(\T^2)$.

\subsection{Matrix Factorizations}
\label{Sect.2.2}
In this subsection, we collect necessary results about Gohberg--Kre{\u \i}n theory for factorizations of rational matrix functions in cases on the unit circle in the complex plane.
For details, we refer the reader to \cite{GK58r, CG81, GKS03}.

Let us consider the Riemann sphere $\hat{\C} = \C \cup \{ \infty \}$.
The unit circle $\T$ is contained in $\hat{\C}$ and $\hat{\C} \setminus \T$ consists of two connected components.
Let $D_+ = \{ z \in \C \mid \lvert z \lvert < 1\}$ and $D_- = \{ z \in \C \mid \lvert z \lvert  > 1 \} \cup \{ \infty\}$ which are open disks.
We write $\D^2$ for the closed unit disk $\T \cup D_+$ in the complex plane.
Let $f \colon \T \to GL_n(\C)$ be a nonsingular rational matrix function, that is, a nonsingular matrix function of entries consisting of rational functions with poles off $\T$.
We have the following decomposition, called a {\em (right) matrix factorization}:
\begin{equation}\label{factorization}
	f = f_- \Lambda f_+,
\end{equation}
where $f_-$, $\Lambda$ and $f_+$ are rational matrix functions on $\T$ satisfying the conditions below.
\begin{itemize}
\item $f_+$ (resp. $f_-$) admits a continuous extension onto $\T \cup D_+ = \D^2$ (resp. $\T \cup D_-$) as a nonsingular matrix function and is analytic on $D_+$ (resp. $D_-$).
We write $f_+^e$ (resp. $f_-^e$) for the extension.
\item $\Lambda$ is the diagonal matrix function of the form $\Lambda(z) = \diag(z^{\kappa_1}, \ldots, z^{\kappa_n})$ with nonincreasing sequence of integers $\kappa_1 \geq \cdots \geq \kappa_n$ called {\em partial indices}.
\end{itemize}
Among the many results known for matrix factorizations, we note the following:
\begin{itemize}
\item The Toeplitz operator $T_f$ is invertible if and only if all of the partial indices are zero. In this case, factorization (\ref{factorization}) is called a {\em canonical factorization}. 
\item If we have two canonical factorizations $f = f_- f_+ = g_- g_+$, there exists an invertible matrix $B \in GL_n(\C)$, considered a constant matrix function, satisfying $f_+ = B g_+$ and $f_- = g_- B^{-1}$.
\end{itemize}

\begin{remark}\label{rem21}
There is known a general class of Banach algebras of functions on the circle that admits matrix factorizations called decomposing Banach algebras \cite{CG81}.
One example is the Wiener algebra over the circle consisting of all complex-valued functions $f$ on $\T$ admitting an absolutely convergent Fourier series.
The results in this paper are also valid for such continuous matrix functions admitting matrix factorizations.
Note that the algebra $C(\T)$ of continuous functions is not decomposing. 
In this paper, we mainly discuss rational matrix functions on the unit circle in view of our applications discussed in Sect~\ref{Sect.6}.
\end{remark}

\section{Extension of Symbols Through Matrix Factorizations}
\label{Sect.3}
Let $f \colon \T^2 \to GL_n(\C)$ be a continuous map.
The associated quarter-plane Toeplitz operator $T^{0,\infty}_f$ is Fredholm if and only if the half-plane Toeplitz operators $T^0_f$ and $T^\infty_f$ are invertible \cite{DH71}.
In this section, we discuss the geometric implication of this condition when the symbol $f$ is a rational matrix function for both of the two variables of $\T^2$. 
By using matrix factorizations, we provide a way to extend $f$ to a nonsingular matrix-valued continuous function on a three sphere (Sect.~\ref{Sect.3.1}).
We also discuss its variants for families of them (Sect.~\ref{Sect.3.2}) and the cases when matrix functions take value in (skew-)hermitian matrices (Sect.~\ref{Sect.3.3}).

\subsection{Extension of Nonsingular Matrix Functions of Trivial Partial Indices}
\label{Sect.3.1}
Let $f$ be a nonsingular rational matrix function on the circle $\T$, and assume that the associated Toeplitz operator $T_f$ is invertible.
Let $f = f_- f_+$ be a canonical factorization of $f$.
Let $f^e_+$ and $f^e_-$ be the associated extensions of $f_+$ and $f_-$ onto $\T \cup D_+$ and $\T \cup D_-$, respectively.
For two disks $\D^2 = \T \cup D_+$ and $\T \cup D_-$, we consider the following identification:
\begin{equation*}
	I \colon \T \cup D_+ \to \T \cup D_-, \ I(z) = \bar{z}^{-1},
\end{equation*}
where we set $I(0) = \infty$.
By using this identification, we associate the following nonsingular matrix-valued continuous map for a canonical factorization:
\begin{equation}\label{mainext1}
	f^e = (I^*f_-^e) \cdot f_+^e \colon \D^2 \to GL_n(\C).
\end{equation}
Explicitly, $f^e(z) = f_-^e(\bar{z}^{-1}) f_+^e(z)$ for $z \in \D^2$.
Since $\bar{z}^{-1} = z$ for $z \in \T$, $f^e$ coincides with $f$ on $\T$ and is its continuous extension onto $\D^2$.

\begin{lemma}\label{extension1}
	$f^e$ is independent of the choice of canonical factorization.
\end{lemma}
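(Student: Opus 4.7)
The plan is to exploit the uniqueness statement for canonical factorizations recalled at the end of Section~\ref{Sect.2.2}: if $f = f_- f_+ = g_- g_+$ are two canonical factorizations, then there exists a constant invertible matrix $B \in GL_n(\C)$ with $f_+ = B g_+$ and $f_- = g_- B^{-1}$ as rational matrix functions on $\T$. From this, independence of $f^e$ should follow by a short manipulation, and I do not expect a serious obstacle.

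First I would pass from the identity $f_+ = B g_+$ on $\T$ to the corresponding identity $f_+^e = B \cdot g_+^e$ on the entire closed disk $\D^2 = \T \cup D_+$. This is justified because both sides are continuous on $\D^2$, analytic on $D_+$, and agree on the boundary $\T$, so the identity theorem (or simply the uniqueness of continuous extension) forces them to coincide throughout $\D^2$. An analogous argument gives $f_-^e = g_-^e \cdot B^{-1}$ on $\T \cup D_-$.

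Next, I would substitute these two identities into the defining formula (\ref{mainext1}). For any $z \in \D^2$, one has $\bar{z}^{-1} \in \T \cup D_-$ under the identification $I$, so the extensions of $f_-$ and $g_-$ satisfy $f_-^e(\bar{z}^{-1}) = g_-^e(\bar{z}^{-1}) \cdot B^{-1}$, while on $\D^2$ one has $f_+^e(z) = B \cdot g_+^e(z)$. Multiplying gives
\begin{equation*}
f^e(z) = f_-^e(\bar{z}^{-1}) f_+^e(z) = g_-^e(\bar{z}^{-1}) B^{-1} B\, g_+^e(z) = g_-^e(\bar{z}^{-1}) g_+^e(z),
\end{equation*}
so the constant matrix $B$ cancels and the extension constructed from the second factorization agrees with that constructed from the first. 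The only delicate point worth double-checking is that the analytic-continuation step really applies to $f_-^e$ at the point $\infty \in D_-$ (so that the identity extends to all of $\T \cup D_-$ including $\infty$), but this is immediate from the nonsingular continuity of $f_-^e$ there together with the fact that $B^{-1}$ is a constant matrix.
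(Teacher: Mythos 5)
Your proof is correct and follows essentially the same route as the paper: invoke the uniqueness of canonical factorizations to get the constant matrix $B$, propagate the identities $f_+^e = B g_+^e$ and $f_-^e = g_-^e B^{-1}$ to the disks by uniqueness of analytic continuation, and observe that $B$ cancels in the product defining $f^e$. No gaps.
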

\begin{proof}
For two canonical factorizations $f = f_- f_+ = g_- g_+$, there exists $B \in GL_n(\C)$ satisfying $f_+ = B g_+$ and $f_- = g_- B^{-1}$.
Note that $f^e_+ = (B g_+)^e = B g^e_+$ and $f_-^e = g_-^e B^{-1}$, which follow from the uniqueness of analytic continuation.
Therefore, for $z \in \D^2$,
\begin{equation*}
	f_-^e(\bar{z}^{-1}) \cdot f_+^e(z) = g_-^e(\bar{z}^{-1})B^{-1} \cdot B g_+^e(z) = g_-^e(\bar{z}^{-1}) \cdot g_+^e(z).
\end{equation*}
\end{proof}
By Lemma~\ref{extension1}, for a rational matrix function $f \colon \T \to GL_n(\C)$ whose associated Toeplitz operator $T_f$ is invertible, there is a canonically associated extension $f^e$ onto the disk $\D^2$.
For a non-negative real number $t$, we write $\T_t = \{ z \in \C \mid \lvert z \lvert \ =t \}$.
For $0 \leq t \leq 1$, let $m_t \colon \T \to \T_t$ be the map defined by $m_t(z)=tz$.
We take the pullback $m_t^* (f^e\lvert_{\T_t})$ of $f^e\lvert_{\T_t}$ onto $\T$ by $m_t$.
Let us consider the Toeplitz operator $T_t := T_{m_t^* (f^e\lvert_{\T_t})}$ associated with this matrix function.
In other words, we consider $\D^2$ to be a family of circles of radius $0 \leq t \leq 1$ and consider the associated family of Toeplitz operators.
For these operators, the following holds:

\begin{lemma}\label{extension3}
Let $f$ be a nonsingular rational matrix function on $\T$ of trivial partial indices, and consider its extension $f^e \colon \D^2 \to GL_n(\C)$ in (\ref{mainext1}).
For $0 \leq t \leq 1$, the Toeplitz operator $T_t$ in is invertible.
\end{lemma}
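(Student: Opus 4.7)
The plan is to exhibit an explicit canonical factorization of the pulled-back symbol $m_t^*(f^e|_{\T_t})$ on $\T$ and then invoke the Gohberg--Kre{\u \i}n criterion recalled in Sect.~\ref{Sect.2.2}: trivial partial indices imply invertibility of the Toeplitz operator.

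First I would compute the symbol directly. For $z \in \T$ one has $\bar z = z^{-1}$, hence $\overline{tz}^{-1} = t^{-1} z$, so that for $0 < t \leq 1$,
\begin{equation*}
m_t^*(f^e|_{\T_t})(z) \;=\; f^e(tz) \;=\; f_-^e(t^{-1} z)\, f_+^e(tz) .
\end{equation*}
At $t = 0$ the same computation (using $I(0) = \infty$) collapses the expression to the constant matrix $f_-^e(\infty)\, f_+^e(0) \in GL_n(\C)$, and a constant invertible symbol obviously yields an invertible Toeplitz operator, so this boundary case is immediate.

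For $0 < t \leq 1$ I would then set $g_+(z) := f_+^e(tz)$ and $g_-(z) := f_-^e(t^{-1} z)$, so that $m_t^*(f^e|_{\T_t}) = g_- \cdot g_+$ on $\T$. The extension checks are routine: since $|tz| \leq t \leq 1$, the formula $z \mapsto f_+^e(tz)$ extends $g_+$ to a nonsingular analytic matrix function on $\T \cup D_+$; and since $z \mapsto t^{-1} z$ is a M\"obius automorphism of $\hat\C$ that sends $\T \cup D_-$ into itself (with $\infty \mapsto \infty$), the formula $z \mapsto f_-^e(t^{-1} z)$ extends $g_-$ to a nonsingular analytic matrix function on $\T \cup D_-$.

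This presents $g_- g_+$ as a canonical factorization of $m_t^*(f^e|_{\T_t})$ (all partial indices are zero by construction), so the cited criterion yields invertibility of $T_t$. The only point that takes a moment's care is that $z \mapsto t^{-1} z$ must be viewed as a M\"obius transformation on $\hat\C$ in order to transfer analyticity of $f_-^e$ at $\infty$ to $g_-$ on $D_-$; once this is granted the argument has no real obstacle. Note also that at $t = 1$ the construction reduces to the original canonical factorization $f = f_- f_+$, recovering invertibility of $T_f$ as a consistency check.
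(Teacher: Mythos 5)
Your proposal is correct and follows essentially the same route as the paper: the same computation $f^e(tz)=f_-^e(t^{-1}z)f_+^e(tz)$ on $\T$, the same splitting into the two factors $z\mapsto f_+^e(tz)$ and $z\mapsto f_-^e(t^{-1}z)$ with the observation that $z\mapsto tz$ and $z\mapsto t^{-1}z$ carry $\T\cup D_+$ and $\T\cup D_-$ into themselves, and the same appeal to the Gohberg--Kre{\u\i}n criterion that a canonical factorization forces invertibility of $T_t$. The treatment of the endpoints $t=0$ and $t=1$ also matches the paper's.
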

\begin{proof}
The invertibility of $T_1 = T_f$ is included in our assumption.
When $t=0$, the Toeplitz operator $T_0$ is associated with nonsingular constant matrix function and is invertible.
Let us consider the case in which $0 < t <1$.
For $z \in \T$, we have
\begin{equation*}
	m_t^* (f^e\lvert_{\T_t})(z) = f^e(tz) = f^e_-((\overline{tz})^{-1}) f^e_+(tz) = f^e_-(t^{-1}z) f^e_+(tz).
\end{equation*}
Therefore, we have the following decomposition for the symbol of $T_t$:
\begin{equation}\label{eqnpf1}
	m_t^* (f^e\lvert_{\T_t})= m^*_{t^{-1}} (f^e_-\lvert_{\T_{t^{-1}}}) \cdot m_t^*(f^e_+\lvert_{\T_t}).
\end{equation}
Each component of equation (\ref{eqnpf1}) is a rational matrix function on $\T$.
Let $D_{+,t} = \{ z \in \C \mid \lvert z \lvert \ < t\}$ and $D_{-,{t^{-1}}} = \{ z \in \C \mid \lvert z \lvert \ > t^{-1} \} \cup \{ \infty\}$.
Let us consider the maps $m_{+,t} \colon D_+ \to D_{+,t}$, $z \mapsto tz$, and $m_{-,t^{-1}} \colon D_- \to D_{-,t^{-1}}$, $z \mapsto t^{-1}z$.
Since $f^e_-$ and $f^e_+$ are extensions of $f_-$ and $f_+$ that are analytic on $D_-$ and $D_+$, pullbacks of their restrictions $m_{-,t^{-1}}^* (f^e_-\lvert_{D_{-,{t^{-1}}}})$ and $m_{+,t}^*(f^e_+\lvert_{D_{+,t}})$ onto $D_-$ and $D_+$ also provide such extensions of $m^*_{t^{-1}} (f^e_-\lvert_{\T_{t^{-1}}})$ and $m_t^*(f^e_+\lvert_{\T_t})$.
Therefore, equation (\ref{eqnpf1}) is a canonical factorization of $m_t^* (f^e\lvert_{\T_t})$, and the associated Toeplitz operator $T_t$ is invertible.
\end{proof}

\subsection{Extension of Families of Matrix Functions}
\label{Sect.3.2}
We next extend the discussions in Sect.~\ref{Sect.3.1} to families of rational matrix functions of trivial partial indices.
Matrix factorizations for such families are studied by \v{S}ubin \cite{Subin67}.

\begin{lemma}\label{extension2}
	Let $X$ be a topological space. Let $f \colon \T \times X \to GL_n(\C)$ be a continuous map such that for each $x \in X$, $f(x)$ is a rational matrix function on $\T$ of trivial partial indices.
Through the matrix factorization, there canonically associates a continuous map $f^e \colon \D^2 \times X \to GL_n(\C)$ that extends $f$.
\end{lemma}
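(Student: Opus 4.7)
The plan is to construct $f^e$ locally on $X$ via the formula from Sect.~\ref{Sect.3.1} applied to families of canonical factorizations, and then use the uniqueness underlying Lemma~\ref{extension1} to glue the local pieces into a globally well-defined continuous extension.

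First, for each point $x_0 \in X$, I would invoke \v{S}ubin's theorem on continuous families of matrix factorizations \cite{Subin67}: since every partial index is zero throughout $X$, there is an open neighborhood $U$ of $x_0$ and continuous maps $f_{-,U}, f_{+,U} \colon \T \times U \to GL_n(\C)$ such that, for each $x \in U$, the product $f_{-,U}(z,x) f_{+,U}(z,x) = f(z,x)$ is a canonical factorization on $\T$. Passing to the analytic extensions fibrewise, the Cauchy integral representation shows that the induced maps $f^e_{\pm,U} \colon (\T \cup D_\pm) \times U \to GL_n(\C)$ are jointly continuous in $(z,x)$ and analytic in $z$ on $D_\pm$ for each fixed $x$. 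Setting
\begin{equation*}
f^e_U(z,x) = f^e_{-,U}(\bar{z}^{-1}, x) \cdot f^e_{+,U}(z,x), \qquad (z,x) \in \D^2 \times U,
\end{equation*}
one obtains a continuous extension of $f|_{\T \times U}$ by exactly the pointwise formula (\ref{mainext1}).

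To see that the local extensions patch together, suppose $V$ is another such neighborhood equipped with factorizations $f_{\pm,V}$. For each $x \in U \cap V$, the uniqueness statement recalled in Sect.~\ref{Sect.2.2} produces $B(x) \in GL_n(\C)$ with $f_{+,U}(x) = B(x) f_{+,V}(x)$ and $f_{-,U}(x) = f_{-,V}(x) B(x)^{-1}$. The same calculation as in the proof of Lemma~\ref{extension1}, performed pointwise in $x$, gives $f^e_U(z,x) = f^e_V(z,x)$ on $\D^2 \times (U \cap V)$. Consequently the local extensions agree on overlaps and assemble into a continuous map $f^e \colon \D^2 \times X \to GL_n(\C)$; by construction it is canonically attached to the family $f$, since any two choices of local factorizations produce the same $f^e_U$.

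The main obstacle is ensuring joint continuity of the analytic extensions without ever globalizing the factorizations themselves: the maps $f_{\pm,U}$ need not patch into continuous maps on $\T \times X$, because the transition data $\{B(x)\}$ form a $GL_n(\C)$-valued cocycle that may be non-trivial. The key observation is that $f^e$ is the only object guaranteed to be canonical, so the strategy must rely entirely on \v{S}ubin's local existence combined with the pointwise independence from Lemma~\ref{extension1}, rather than on any attempt to produce a global continuous factorization.
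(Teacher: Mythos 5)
Your proposal is correct and follows essentially the same route as the paper: local continuous canonical factorizations from \v{S}ubin, the pointwise formula (\ref{mainext1}) on each $\D^2 \times U$, and gluing via the independence-of-factorization argument of Lemma~\ref{extension1}. Your added remarks on joint continuity of the analytic extensions and on the possible non-triviality of the transition cocycle $\{B(x)\}$ are sound details that the paper leaves implicit.
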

\begin{proof}
Following \cite{Subin67}, for each $x_0 \in X$, there exists an open neighborhood $U \subset X$ of $x_0$ and continuous matrix functions $f_+$ and $f_-$ on $\T \times U$ such that for $x \in U$, $f(x) = f_-(x) f_+(x)$ is a canonical factorization.
By using this factorization, we obtain a continuous extension $f^e \colon \D^2 \times U \to GL_n(\C)$ of $f$ as in equation (\ref{mainext1}).
As in Lemma~\ref{extension1}, this $f^e$ is independent of the choice of canonical factorization.
We cover $X$ by such open sets $\{ U_\alpha \}_{\alpha \in J}$.
When $U_\alpha \cap U_\beta \neq \emptyset$, we may consider two extensions of $f\lvert_{\T \times (U_\alpha \cap U_\beta)}$ onto $\D^2 \times (U_\alpha \cap U_\beta)$ corresponding to extensions onto $\D^2 \times U_\alpha$ and $\D^2 \times U_\beta$, and they coincide by Lemma~\ref{extension1}.
Therefore, we obtain the desired extension onto $\D^2 \times X$.
\end{proof}

We next consider families of two-variable rational matrix functions whose associated quarter-plane Toeplitz operators are Fredholm.
Let
\begin{equation*}
	\tilde{\SSS}^3 := \partial (\D^2 \times \D^2) = \T \times \D^2 \underset{\T^2}{\cup} \D^2 \times \T,
	\vspace{-1mm}
\end{equation*}
which is topologically a three-dimensional sphere.
Since $\D^2 \subset \C$, we consider $\tilde{\SSS}^3$ as a subspace of $\C^2$ and use complex variables $(z,w) \in \C^2$ to parametrize $\tilde{\SSS}^3$.

\begin{proposition}\label{ext3}
	Let $X$ be a topological space. Let $f \colon \T^2 \times X \to GL_n(\C)$
be a continuous map such that for each $x \in X$, $f(x)$ is a two-variable rational matrix function for which the associated quarter-plane Toeplitz operator $T^{0,\infty}_{f(x)}$ is Fredholm.
Through matrix factorization, there canonically associates a continuous map $f^E \colon \tilde{\SSS}^3 \times X \to GL_n(\C)$ that extends $f$.
\end{proposition}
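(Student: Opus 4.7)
The plan is to reduce to Lemma~\ref{extension2} applied separately in each of the two torus directions, and then glue the resulting extensions along their common intersection $\T^2 \times X$.

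First, I would translate the Fredholm hypothesis into a pointwise statement on partial indices. Since $T^{0,\infty}_{f(x)}$ is Fredholm, by \cite{DH71} both half-plane Toeplitz operators $T^0_{f(x)}$ and $T^\infty_{f(x)}$ are invertible. Under the isomorphism $\TT^0 \cong C(\T) \bm{\otimes} \TT$ coming from the Fourier transform in the direction parallel to the boundary, $T^0_{f(x)}$ corresponds to the continuous family $z \mapsto T_{f(x)(z, \cdot)}$ of one-variable Toeplitz operators; invertibility of this element of $C(\T, \TT)$, together with compactness of $\T$, forces $T_{f(x)(z, \cdot)}$ to be invertible for every $z \in \T$. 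Hence, for each pair $(z, x) \in \T \times X$, the one-variable rational matrix function $w \mapsto f(x)(z, w)$ has trivial partial indices. The symmetric argument applied to $T^\infty_{f(x)}$ gives, for each $(w, x) \in \T \times X$, triviality of the partial indices of $z \mapsto f(x)(z, w)$.

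Second, I would apply Lemma~\ref{extension2} twice. Viewing $f$ as a continuous family, parametrized by $\T \times X$ with parameter $(z, x)$, of rational matrix functions in the variable $w$ of trivial partial indices, Lemma~\ref{extension2} yields a canonical continuous extension $f^E_1 \colon \T \times \D^2 \times X \to GL_n(\C)$ of $f$ in the $w$-direction. Analogously, extending in the $z$-direction produces $f^E_2 \colon \D^2 \times \T \times X \to GL_n(\C)$. Both $f^E_1$ and $f^E_2$ restrict to $f$ on the common set $\T^2 \times X$, so they glue to a continuous map
\begin{equation*}
f^E \colon \tilde{\SSS}^3 \times X = \bigl( \T \times \D^2 \underset{\T^2}{\cup} \D^2 \times \T \bigr) \times X \to GL_n(\C)
\end{equation*}
that extends $f$. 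Canonicity of $f^E$ follows immediately from the canonicity of $f^E_1$ and $f^E_2$ already supplied by Lemma~\ref{extension2}.

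The step that requires the most care is the first one: confirming that invertibility of $T^0_{f(x)}$ (resp.\ $T^\infty_{f(x)}$) as a half-plane Toeplitz operator really is equivalent to the pointwise invertibility of the slice Toeplitz operators along $\T \times X$, and that this pointwise data is continuous in the parameter $(z, x)$ (resp.\ $(w, x)$) so that Lemma~\ref{extension2} applies verbatim. Once this pointwise reduction is secured, the remainder of the argument is a direct assembly from Lemma~\ref{extension2} and the patching along $\T^2 \times X$.
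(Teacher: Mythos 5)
Your proposal is correct and follows essentially the same route as the paper: invoke Douglas--Howe to get invertibility of the two half-plane operators, pass via the Fourier transform to families of invertible one-variable Toeplitz operators, apply Lemma~\ref{extension2} in each direction, and glue the two extensions along $\T^2 \times X$. (The only cosmetic remark is that deducing pointwise invertibility of the slices from invertibility in $C(\T,\TT)$ needs no compactness argument, since evaluation at each point is a unital homomorphism.)
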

\begin{proof}
For each $x \in X$, since $T^{0,\infty}_{f(x)}$ is Fredholm, both of the associated half-plane Toeplitz operators $T^0_{f(x)}$ and $T^\infty_{f(x)}$ are invertible \cite{DH71}.
Through a Fourier transform in a direction parallel to the boundary, the invertible half-plane Toeplitz operator $T^0_{f(x)}$ corresponds to a family of invertible Toeplitz operators $\{ T_{f(z, \cdot, x)} \}_{z \in \T}$ parametrized by the circle.
Therefore, by Lemma~\ref{extension2}, there canonically associates a continuous extension
$f^e \colon \T \times \D^2 \times X \to GL_n(\C)$
of $f$ through matrix factorization.
By the invertibility of $T^\infty_{f(x)}$, we also obtain an extension of $f$ onto $\D^2 \times \T \times X$ through matrix factorization.
Combined with them, we obtain an extension $f^E$ of $f$ as a nonsingular matrix-valued continuous function on $\tilde{\SSS}^3 \times X$.
\end{proof}

\subsection{Hermitian and Skew-Hermitian Matrix Functions}
\label{Sect.3.3}
In this subsection, we discuss the case in which the nonsingular rational matrix functions in Sect.~\ref{Sect.3.1} and \ref{Sect.3.2} take values in {\em hermitian} or {\em skew-hermitian} matrices.
Let $GL_n(\C)^\sa$ (resp. $GL_n(\C)^\ska$) be the space of $n$-by-$n$ hermitian (resp. skew-hermitian) invertible matrices.

\begin{lemma}\label{ffh}
Let $\mathfrak{I}$ be $GL_n(\C)^\sa$ or $GL_n(\C)^\ska$.
Let $X$ be a topological space and $f \colon \T \times X \to \mathfrak{I}$ be a continuous map such that, for each $x \in X$, $f(x)$ is a rational matrix function on $\T$ with trivial partial indices.
Then the extension $f^e$ of $f$ in Lemma~\ref{extension2} is also a hermitian or skew-hermitian matrix function; that is, $f^e \colon \D^2 \times X \to \mathfrak{I}$.
\end{lemma}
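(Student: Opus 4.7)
The plan is to exploit the uniqueness of the canonical factorization together with the $*$-involution. Hermiticity of $f$ produces, by taking adjoints of a canonical factorization $f=f_-f_+$, a second canonical factorization $f=\tilde f_-\tilde f_+$ whose factors are essentially the adjoints of $f_+$ and $f_-$ swapped across the unit circle via $w\mapsto \bar w^{-1}$. Because Lemma~\ref{extension1} (and its family version used inside Lemma~\ref{extension2}) guarantees that the extension is independent of the chosen factorization, applying the construction in \eqref{mainext1} to the second factorization is forced to produce $(f^e)^*$, hence $f^e=(f^e)^*$. The skew-hermitian case is obtained by absorbing a sign into one of the two factors.

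\textbf{Main steps.} First, work locally in $X$ as in the proof of Lemma~\ref{extension2}, fixing a canonical factorization $f(x)=f_-(x)f_+(x)$ on a neighborhood $U\subset X$ (I suppress $x$ henceforth). Second, for $w\in \T\cup D_-$ and $z\in \T\cup D_+$ set
\begin{equation*}
\tilde f_-^e(w) := f_+^e(\bar w^{-1})^{*}, \qquad \tilde f_+^e(z) := f_-^e(\bar z^{-1})^{*}.
\end{equation*}
These are continuous on their respective closed disks; entrywise each has the form $\overline{h(\bar w^{-1})}$ with $h$ holomorphic on $D_+$ (resp.\ $D_-$), which equals $h^{\sharp}(w^{-1})$ for the Schwarz reflection $h^{\sharp}(\zeta):=\overline{h(\bar\zeta)}$, and is therefore holomorphic in $w$ on the appropriate disk. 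Third, since $\bar z^{-1}=z$ for $z\in\T$ and $f^{*}=f$, restricting to $\T$ gives $\tilde f_-^e(z)\tilde f_+^e(z)=f_+(z)^{*}f_-(z)^{*}=(f_-(z)f_+(z))^{*}=f(z)$, so $f=\tilde f_-\tilde f_+$ is a second canonical factorization of $f$ (the partial indices remain zero since the adjoint interchanges the roles of $D_+$ and $D_-$).

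\textbf{Conclusion.} Apply Lemma~\ref{extension1} (locally, inside the proof of Lemma~\ref{extension2}): the extension built from this second factorization must coincide with the one built from $f=f_-f_+$. Computing it directly,
\begin{equation*}
f^e(z)\,=\,\tilde f_-^e(\bar z^{-1})\,\tilde f_+^e(z)\,=\,f_+^e(z)^{*}\,f_-^e(\bar z^{-1})^{*}\,=\,\bigl(f_-^e(\bar z^{-1})\,f_+^e(z)\bigr)^{*}\,=\,f^e(z)^{*},
\end{equation*}
so $f^e$ is hermitian-valued. For the skew-hermitian case the same argument applies to the factorization $f=(-f_+^{*})(f_-^{*})$, yielding $f^e(z)=-f^e(z)^{*}$. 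The continuous dependence on $x\in X$ is automatic since the adjoint and the map $w\mapsto\bar w^{-1}$ act pointwise in $x$, and the local extensions glue as in Lemma~\ref{extension2}.

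\textbf{Main obstacle.} The only non-formal step is verifying that the adjoint of a matrix function analytic on $D_+$ composed with $w\mapsto\bar w^{-1}$ yields a function analytic on $D_-$; this is the Schwarz-reflection identity recorded in Step~2 and is the sole analytic input. Once that is in place, uniqueness of the canonical factorization reduces the lemma to the displayed algebraic identity above.
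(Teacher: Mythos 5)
Your proof is correct and follows essentially the same route as the paper: both produce a second canonical factorization from $f=f^*$ (resp. $f=-f^*$) via the adjoint and the reflection $w\mapsto\bar w^{-1}$, and then use the uniqueness of the resulting extension to conclude $f^e=(f^e)^*$. The only cosmetic difference is that you invoke Lemma~\ref{extension1} as a black box, whereas the paper re-runs its argument explicitly by producing the constant matrix $B$ with $f_+^*=f_-B^{-1}$, $f_-^*=Bf_+$ and cancelling it; the analytic input (Schwarz reflection) is identical.
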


\begin{proof}
We consider hermitian matrix functions, that is, the case in which $\mathfrak{I} = GL_n(\C)^\sa$.
By Lemma~\ref{extension2}, it is sufficient to show that $f^e(x)$ for each $x \in X$ is a hermitian matrix-valued function.
Therefore, it is sufficient to consider the case when $X$ is one point set and we assume this condition.
Let $f = f_- f_+$ be a canonical factorization of $f$.
Since $f$ is hermitian, we have 
$f_- f_+ = f = f^* = f_+^* f_-^*$.
The equation $f = f_+^* f_-^*$ is also a canonical factorization since $(f^e_+)^* \circ I^{-1}$ (resp. $(f^e_-)^* \circ I$) provides a continuous extension of $f_+^*$ onto $\T \cup D_-$ (resp. $f_-^*$ onto $\T \cup D_+$), which is analytic on $D_-$ (resp. $D_+$).
Therefore, there exists $B \in GL_n(\C)$ such that $f_+^* = f_- B^{-1}$ and $f_-^* = B f_+$.
The uniqueness of analytic continuation leads to the relations
$(f^e_+)^* \circ I^{-1} = (f_+^*)^e = f_-^e B^{-1}$ and
$(f^e_-)^* \circ I = (f_-^*)^e = B f_+^e$.
Therefore, for $z \in \D^2$,
\begin{equation*}
	(f^e(z))^* = (f^e_-(\bar{z}^{-1}) \cdot f_+^e(z))^* 
	  = (f_+^e(z))^* (f^e_-(\bar{z}^{-1}))^*
	  = f_-^e(\bar{z}^{-1})B^{-1} B f_+^e(z)
	  = f^e(z).
\end{equation*}

The result for skew-hermitian matrix functions is proved in a similar manner.
\end{proof}

By Proposition~\ref{ext3} and Lemma~\ref{ffh}, we obtain the following result:
\begin{proposition}\label{exthf}
Let $\mathfrak{I}$ be $GL_n(\C)^\sa$ or $GL_n(\C)^\ska$.
Let $X$ be a topological space. Let $f \colon \T^2 \times X \to \mathfrak{I}$
be a continuous map such that, for each $x \in X$, $f(x)$ is a two-variable rational matrix function for which the associated quarter-plane Toeplitz operator $T^{0,\infty}_{f(x)}$ is Fredholm.
Through matrix factorization, there canonically associates a continuous map
$f^E \colon \tilde{\SSS}^3 \times X \to \mathfrak{I}$
that extends $f$.
\end{proposition}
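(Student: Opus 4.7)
The plan is a direct combination of Proposition~\ref{ext3} with Lemma~\ref{ffh}. First I would apply Proposition~\ref{ext3} to obtain a continuous nonsingular extension $f^E \colon \tilde{\SSS}^3 \times X \to GL_n(\C)$. All that then remains is to verify that this extension is in fact $\mathfrak{I}$-valued.

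To verify this, I would revisit the construction of $f^E$ given in the proof of Proposition~\ref{ext3}. That construction proceeds by producing $f^E$ separately on $\T \times \D^2 \times X$ and on $\D^2 \times \T \times X$, and then gluing the two pieces along $\T^2 \times X$. Each piece is produced by viewing $f$ as a continuous family of one-variable nonsingular rational matrix functions on $\T$, parametrized by $\T \times X$, every member of which has trivial partial indices (the invertibility of the corresponding half-plane Toeplitz operator follows from the Fredholm property of $T^{0,\infty}_{f(x)}$ via \cite{DH71}), and then invoking Lemma~\ref{extension2} to extend in the remaining disk direction.

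Since by hypothesis $f$ takes values in $\mathfrak{I}$, each of these one-parameter families consists of $\mathfrak{I}$-valued rational matrix functions on $\T$. Applying Lemma~\ref{ffh} with the enlarged parameter space $\T \times X$, the extensions onto $\T \times \D^2 \times X$ and onto $\D^2 \times \T \times X$ produced by Lemma~\ref{extension2} are themselves $\mathfrak{I}$-valued. On the overlap $\T^2 \times X$ both extensions reduce to $f$, so the gluing already performed in Proposition~\ref{ext3} delivers the required continuous map $f^E \colon \tilde{\SSS}^3 \times X \to \mathfrak{I}$.

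I do not expect a serious obstacle here, as the statement is essentially a bookkeeping exercise in tracking the hermitian/skew-hermitian symmetry through the matrix factorization already performed in the previous results. The only point that warrants a sentence of explicit comment is that the two partial extensions must match on $\T^2 \times X$ before gluing; this is automatic because each of them agrees with $f$ there, and the canonical nature of the extension (Lemma~\ref{extension1}, as used inside Lemma~\ref{extension2}) guarantees no ambiguity in the construction.
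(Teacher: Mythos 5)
Your proposal is correct and is essentially the paper's own argument: the paper derives Proposition~\ref{exthf} directly by combining Proposition~\ref{ext3} with Lemma~\ref{ffh}, applying the latter to the families of one-variable symbols parametrized by $\T \times X$ exactly as you describe. The consistency of the two partial extensions on $\T^2 \times X$ is handled the same way, via the canonicity established in Lemma~\ref{extension1}.
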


\section{Index Theorem for Quarter-Plane Toeplitz Operators via Extended Symbols}
\label{Sect.4}
In this section, we give a formula to express family indices for (self-adjoint) Fredholm quarter-plane Toeplitz operators of two-variable rational matrix function symbols by using extended symbols obtained through the matrix factorizations in Sect.~\ref{Sect.3}.
The main theorem in this section is Theorem~\ref{mainfamily} which is formulated by using complex $K$-theory and we start from preliminaries of $K$-theory.

Let $n_0=2$ and $n_1 = 1$.
Let $M^{(0)}_n(\C) = M_{2n}(\C)^\sa$, that is, the set of $2n$-by-$2n$ hermitian matrices, and $M^{(1)}_n(\C) = M_n(\C)$.
For $i =0,1$, let $GL^{(i)}_n(\C)$ be the subspace of $M^{(i)}_n(\C)$ consisting of invertible matrices, therefore, $GL^{(0)}_n(\C) = GL_{2n}(\C)^\sa$ and $GL^{(1)}_n(\C) = GL_n(\C)$.
Let $I^{(0)} = \diag(1, -1)$ and $I^{(1)} = 1$.
We write $I^{(i)}_m$ for the $n_i m$-by-$n_i m$ diagonal matrix $\diag(I^{(i)}, \ldots, I^{(i)})$.
For a unital \CA \ $A$,
let $GL^{(0)}_n(A)$ be the set of self-adjoint invertible elements in $M_{2n}(A)$, and let $GL^{(1)}_n(A)$ be the set of invertible elements in $M_n(A)$.
For $i=0,1$, let $U^{(i)}_n(A)$ be the subspace of $GL^{(i)}_n(A)$ consisting of unitary elements.
The $K$-group $K_i(A)$ for $i=0,1$ is defined as $K_i(A) = \cup_{n=1}^\infty GL^{(i)}_n(A)/\sim_i$, where the equivalence relation $\sim_i$ is generated by homotopy and stabilization by $I^{(i)}$.
For a compact Hausdorff space $X$, complex topological $K$-groups are defined as $K^{-i}(X) = K_i(C(X))$.
For a locally compact Hausdorff space $Y$, we denote $K^{-i}_\cpt(Y)$ for the compactly supported $K$-group of $Y$.
For the basics of $K$-theory used in this paper, we refer the reader to \cite{At67, At68, AS69, HR00, RLL00, BL16}.

Let $i = 0$ or $1$, and let $X$ be a compact Hausdorff space.
Let $f \colon \T^2 \times X \to GL^{(i)}_N(\C)$ be a continuous map, such that for each $x \in X$, $f(x)$ is a two-variable rational matrix function, and the associated quarter-plane Toeplitz operator $T^{0,\infty}_{f(x)}$ is Fredholm.
When $i=1$, $\{ T^{0,\infty}_{f(x)} \}_{x \in X}$ is a family of Fredholm operators which defines an element of the even complex $K$-group $K^0(X)$.
When $i=0$, $\{ T^{0,\infty}_{f(x)} \}_{x \in X}$ is a family of self-adjoint Fredholm operators.
In this case, we may stabilize $f$ if necessary (i.e., take a direct sum with $I^{(0)}$) and assume that the essential spectrum of $T^{0,\infty}_{f(x)}$ is not contained in the set of positive real numbers $\R_{>0}$ or the set of negative real numbers $\R_{<0}$ for any $x \in X$.
Under this assumption, $\{ T^{0,\infty}_{f(x)} \}_{x \in X}$ defines an element of the odd complex $K$-group $K^{1}(X)$.
In both of these cases, we write $[T^{0,\infty}_f]$ for the $K$-class of the family of (self-adjoint) Fredholm\footnote{Self-adjoint Fredholm operators when $i=0$ and Fredholm operators when $i=1$. In this section, the term {\em (self-adjoint)} should be read when $i = 0$.} quarter-plane Toeplitz operators.
By using matrix factorization, there is an associated continuous extension $f^E$ of $f$ onto $\tilde{\SSS}^3 \times X$ that takes values in hermitian invertible matrices (when $i=0$; see Proposition~\ref{exthf}) or invertible matrices (when $i=1$; see Proposition~\ref{ext3}).
This matrix function $f^E$ defines an element $[f^E]$ of the $K$-group $K^0(\tilde{\SSS}^3 \times X)$ when $i=0$, or $K^{-1}(\tilde{\SSS}^3 \times X)$ when $i=1$.
Let us consider the following isomorphism,
\begin{equation}\label{Kdecomp}
	K^{-i}(\tilde{\SSS}^3 \times X) \cong K^{-i}(X) \oplus K^{-i}_\cpt(\R^3 \times X) \cong K^{-i}(X) \oplus K^{-i-3}(X).
\end{equation}
We write $\beta \colon K^{-i}(\tilde{\SSS}^3 \times X) \to K^{-i+1}(X)$ for the composite of the projection
$K^{-i}(\tilde{\SSS}^3 \times X) \to K^{-i-3}(X)$ through the above decomposition and the Bott periodicity isomorphism $K^{-i-3}(X) \cong K^{-i+1}(X)$.
The following is the main theorem in this paper.

\begin{theorem}\label{mainfamily}
Let $X$ be a compact Hausdorff space.
Let $f \colon \T^2 \times X \to GL^{(i)}_N(\C)$ be a continuous map such that for each $x \in X$, $f(x)$ is a two-variable rational matrix function, and the associated quarter-plane Toeplitz operator $T^{0,\infty}_{f(x)}$ is Fredholm.
Let $f^E \colon \tilde{\SSS}^3 \times X \to GL^{(i)}_N(\C)$ be the extension of $f$ through matrix factorization in the hermitian case of Proposition~\ref{exthf} (when $i=0$) or Proposition~\ref{ext3} (when $i=1$).
Then, $ [T^{0,\infty}_f] = \beta([f^E])$ in the $K$-group $K^{-i+1}(X)$.
\end{theorem}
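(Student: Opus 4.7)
The plan is to exhibit $[T^{0,\infty}_f]$ as the image of the symbol pair $(T^0_f, T^\infty_f) \in \Szi$ under the connecting map of the extension (\ref{tqExt}), recognize the matrix factorization construction as a natural $K$-theoretic map that recovers $(T^0_f, T^\infty_f)$ from $f^E$, and then identify the resulting composition with $\beta$ via Bott periodicity combined with a Coburn--Douglas--Singer-type deformation.

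First, the family-index construction applied to (\ref{tqExt}) tensored with $C(X)$ gives
\begin{equation*}
	[T^{0,\infty}_f] = \partial[(T^0_f, T^\infty_f)],
\end{equation*}
where $\partial \colon K_i(\Szi \otimes C(X)) \to K^{-i+1}(X)$ is the connecting homomorphism. Using Proposition~\ref{ext3} (or Proposition~\ref{exthf} in the self-adjoint case) I would then define a natural homomorphism
\begin{equation*}
	\Phi \colon K^{-i}(\tilde{\SSS}^3 \times X) \to K_i(\Szi \otimes C(X))
\end{equation*}
by sending a matrix function $g$ to the pair of half-plane Toeplitz operators built from the canonical factorizations of $g|_{\T \times \D^2 \times X}$ and $g|_{\D^2 \times \T \times X}$. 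The required invertibility of these half-plane operators is exactly what Lemma~\ref{extension3} supplies after Fourier-transforming in the direction parallel to the boundary. Since $\Phi([f^E]) = [(T^0_f, T^\infty_f)]$ by construction, the theorem reduces to the identity $\partial \circ \Phi = \beta$ as maps $K^{-i}(\tilde{\SSS}^3 \times X) \to K^{-i+1}(X)$.

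For this identity I would use the splitting $K^{-i}(\tilde{\SSS}^3 \times X) \cong K^{-i}(X) \oplus K^{-i-3}(X)$ from (\ref{Kdecomp}) and treat each summand separately. On the $K^{-i}(X)$ summand, classes are represented by symbols constant along $\tilde{\SSS}^3$, for which $T^{0,\infty}_g$ is itself invertible; hence both $\partial\circ\Phi$ and $\beta$ vanish. On the $K^{-i-3}(X) \cong K^{-i+1}(X)$ summand, naturality in $X$ reduces the problem to checking a single generator, essentially the Bott class concentrated near a base point of $\tilde{\SSS}^3$. For such a generator I would follow Coburn--Douglas--Singer \cite{CDS72} and use the radial scalings $m_s$ and $m_t$ from Lemma~\ref{extension3} to deform $(T^0_f, T^\infty_f)$ to a standard product form; the connecting map of (\ref{tqExt}) applied to this standard form is the iterated composition of the index maps of the Toeplitz extensions (\ref{tExt}) in the $z$ and $w$ variables, which by the standard identification of Bott periodicity with the Toeplitz index equals the second component of $\beta$.

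The main obstacle will be carrying out this last step rigorously: tracking the family index through a combined two-parameter radial deformation while keeping both half-plane Toeplitz operators invertible — so that the quarter-plane operator remains Fredholm and the $K$-class is preserved — requires care in the order of the scalings, since Lemma~\ref{extension3} controls each radial direction only individually. In the self-adjoint case ($i=0$) the spectral-gap condition avoiding $\R_{>0}$ and $\R_{<0}$ must additionally be maintained along the homotopy, which may require a preliminary stabilization by $I^{(0)}$. Pinning down the precise normalization of the Bott generator on $\tilde{\SSS}^3$ matching the $\beta$ of (\ref{Kdecomp}) is the most delicate point, and will likely be verified by a direct low-dimensional computation with an explicit rational matrix realizing this generator.
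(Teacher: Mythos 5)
There is a genuine gap at the center of your argument: the map $\Phi \colon K^{-i}(\tilde{\SSS}^3 \times X) \to K_i(\Szi \bm{\otimes} C(X))$ is not well defined, and the reduction to ``$\partial \circ \Phi = \beta$, checked on generators'' therefore never gets off the ground. First, a general class in $K^{-i}(\tilde{\SSS}^3 \times X)$ is represented by an arbitrary continuous nonsingular matrix function $g$ on $\tilde{\SSS}^3 \times X$; such a $g$ need not restrict to a rational symbol on $\T^2 \times X$, and even when it extends continuously and nonsingularly over $\T \times \D^2 \times X$ there is no canonical factorization attached to it --- a continuous nonsingular extension to the disk only forces the winding number (hence the index) of each Toeplitz operator $T_{g(z,\cdot,x)}$ to vanish, not its invertibility. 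So the recipe ``send $g$ to the pair of half-plane Toeplitz operators built from canonical factorizations'' only makes sense on the special representatives $f^E$ produced by Proposition~\ref{ext3}, not on all of $K^{-i}(\tilde{\SSS}^3 \times X)$. Second, and more seriously, even if you restrict attention to such representatives, the claim that $\Phi$ (equivalently $\partial \circ \Phi$) descends to $K$-classes is exactly the assertion that $[T^{0,\infty}_f]$ depends only on the homotopy class of the extended symbol $f^E$ --- which is essentially the content of the theorem itself. Assuming it in order to run a ``naturality plus single generator'' argument is circular: you cannot split $K^{-i}(\tilde{\SSS}^3\times X)$ into summands and evaluate a homomorphism on each until you know you have a homomorphism.

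The paper avoids this by never mapping from the sphere back to the operator algebra. Instead it maps \emph{both} sides into an intermediate group $K^{-i+1}([-2,2]\times\T\times X, \{\pm2\}\times\T\times X)/\Ker(\alpha)$: a map $\psi$ from $K_i(\Szi\bm{\otimes}C(X))$ built via the Coburn--Douglas--Singer homotopy-lifting argument (Lemma~\ref{lemma2}, Proposition~\ref{CDSthm}), and a map $\phi$ from $K^{-i}(\tilde{\SSS}^3\times X)$ built from families of Toeplitz operators with \emph{arbitrary continuous} symbols, so that no factorization or invertibility is needed to define it; the substantial work is then proving $\phi$ well defined (Proposition~\ref{welldefphi}) and checking $\psi([(T^0_f,T^\infty_f)])=\phi([f^E])$ on the specific factorization representatives, where Lemma~\ref{extension3} guarantees invertibility and the two families differ by compacts. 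Your closing observations (vanishing on the $K^{-i}(X)$ summand, identification of the other summand with Bott periodicity via the iterated Toeplitz index) do match the paper's computation of $\bar{\alpha}\circ\phi=\beta$, but they are the easy part; to repair your proof you would need to supply a well-definedness argument of the same strength as the paper's construction of $\phi$, or find another way to establish homotopy invariance of $[f^E]\mapsto[T^{0,\infty}_f]$ independently.
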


For the $f$ in Theorem~\ref{mainfamily}, the associated family of half-plane Toeplitz operators $\{ T^0_{f(x)}\}_{x \in X}$ and $\{ T^\infty_{f(x)}\}_{x \in X}$ are (self-adjoint) invertible and define an element $[(T^0_f, T^\infty_f)]$ of the $K$-group $K_i(\Szi \bm{\otimes} C(X))$.
Let
\begin{equation*}
\partial^{\text{qT}} \colon K_i(\Szi \bm{\otimes} C(X)) \to K_{i-1}(C(X)) \cong K^{-i+1}(X)
\end{equation*}
be the boundary map of the six-term exact sequence for $K$-theory of \CA s associated with the following extension obtained by taking a tensor product of the sequence (\ref{tqExt}) with $C(X)$,
\begin{equation}\label{exttens1}
	0 \to \K \bm{\otimes} C(X) \to \TTzi \bm{\otimes} C(X) \overset{\gamma \otimes 1}{\longrightarrow} \Szi \bm{\otimes} C(X) \to 0.
\end{equation}
Note that $\partial^{\text{qT}}([(T^0_f, T^\infty_f)]) = [T^{0,\infty}_f]$.
Let us consider the following diagram:
\begin{equation}\label{diagpsi}
\vcenter{
\xymatrix{
K^{-i}(\T \times X) \oplus K^{-i}(\T \times X) \ar[d]_{\partial^{\text{pair}}} \ar[rd]^{\hspace{3mm} \partial^{\text{T}} \oplus - \partial^{\text{T}}}& \\
K^{-i+1}([-2,2] \times \T \times X, \{ \pm 2 \} \times \T \times X) \ar[r]^{\hspace{2cm}\alpha} & K^{-i+1}(X)
}}
\end{equation}
where $\partial^{\text{T}}$ is the boundary map associated with the Toeplitz extension (\ref{tExt}) and $\partial^{\text{pair}}$ is the boundary map of the six-term exact sequence of topological $K$-theory associated with the pair $([-2,2] \times \T \times X, \{ \pm 2 \} \times \T \times X)$.
Maps $\partial^{\text{T}} \oplus - \partial^{\text{T}}$ and $\partial^{\text{pair}}$ are surjective, and
the kernel of $\partial^{\text{pair}}$ is the diagonals that are contained in the kernel of $\partial^{\text{T}} \oplus - \partial^{\text{T}}$.
Therefore, the horizontal map $\alpha$ making the diagram commutative is induced.
To show Theorem~\ref{mainfamily}, we will construct the following key diagram:
\begin{equation}
\label{maindiagfamily}
\vcenter{
\xymatrix{
K_i(\Szi \bm{\otimes} C(X)) \ar[d]_{\psi} \ar[rrd]^{\partial^{\text{qT}}} & &\\
K^{-i+1}([-2,2] \times \T \times X, \{ \pm 2 \} \times \T \times X) \bigl/\Ker(\alpha) \ar[rr]^{\hspace{2cm}\bar{\alpha}} & & K^{-i+1}(X) \\
K^{-i}(\tilde{\SSS}^3 \times X) \ar[u]^{\phi} \ar[rru]_{\beta} & &
}}
\end{equation}
The map $\alpha$ is surjective and the induced homomorphism $\bar{\alpha}$ in the above diagram is an isomorphism.
Note that there is the following isomorphism,
\begin{equation*}
	K^{-i+1}([-2,2] \times \T\times X, \{ \pm 2 \} \times \T \times X)
		\cong K^{-i}(\T \times X)
		\cong K^{-i}(X) \oplus K^{-i-1}(X).
\end{equation*}
Through this decomposition, $\Ker(\alpha) \cong K^{-i}(X)$ and we have the isomorphism,
\begin{equation}\label{identifyK}
	K^{-i+1}([-2,2] \times \T\times X, \{ \pm 2 \} \times \T \times X) / \Ker(\alpha) \cong K^{-i-1}(X).
\end{equation}
Through this isomorphism, the map $\bar{\alpha}$ correspond to the Bott periodicity isomorphism $K^{-i-1}(X) \cong K^{-i+1}(X)$.

\subsection{Construction of the Map $\psi$}
\label{Sect4.1}
In this subsection, we construct the homomorphism $\psi$ in the diagram (\ref{maindiagfamily}).
For its construction, Coburn--Douglas--Singer's idea to use homotopy lifting property for some fibrations plays a key role, and we introduce them first.

Let $U_n(\TT)$ and $U_n(C(\T))$ be subspaces of $M_n(\TT)$ and $M_n(C(\T))$, respectively, consisting of unitary elements,
let $U_n(\TT)^\sa$ and $U_n(C(\T))^{\sa}$ be subspaces of self-adjoint unitaries.
The map $\sigma$ in (\ref{tExt}) induces the following maps, which we also denote as $\sigma$:
\begin{equation}\label{fib1}
	\sigma \colon U_n(\TT) \to U_n(C(\T)).
\end{equation}
\begin{equation}\label{fib2}
	\sigma \colon U_n(\TT)^{\sa} \to U_n(C(\T))^{\sa}.
\end{equation}
The map (\ref{fib1}) is a Hurewicz fibration, which is used in \cite{CDS72}.
We also have its variants (Lemma~\ref{fiblem} and Proposition~\ref{refib}) which will be well-known \cite{Wood, AS69}, though we briefly contain its proof since, as in \cite{CDS72}, they will play a key role in our discussion.
The proof of Lemma~\ref{fiblem} is simply an application of discussions around Proposition~$4.1$ of \cite{AS69} to the Toeplitz extension.
For a space $Y$ and its element $y$, we write $Y_y$ for the connected component of $Y$ containing $y$.
For a Banach algebra $A$, its subset $S \subset A$ and an $\R$-linear operator $\fs$ on $A$ of order two, we write $S^\fs$ and $S^{-\fs}$ for the subsets of $S$ that are pointwise fixed by $\fs$ and $-\fs$, respectively.

\begin{lemma}\label{fiblem}
	The map (\ref{fib2}) is a Hurewicz fibration.
\end{lemma}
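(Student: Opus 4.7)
The plan is to reduce this to the fibration property of (\ref{fib1}), following the strategy of Coburn--Douglas--Singer \cite{CDS72} and Atiyah--Singer \cite{AS69}. First I would observe that the Cayley-type homeomorphism $u \leftrightarrow \tfrac{1}{2}(1-u)$ identifies $U_n(\TT)^{\sa}$ with the space of projections in $M_n(\TT)$ (and similarly for the base), transforming (\ref{fib2}) into the projection-version of $\sigma$. It therefore suffices to verify the homotopy lifting property for the latter.

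Given an arbitrary test space $Y$, a map $F \colon Y \to M_n(\TT)$ taking values in projections, and a homotopy $H \colon Y \times [0,1] \to M_n(C(\T))$ taking values in projections with $\sigma \circ F = H(\cdot, 0)$, the strategy is to produce a family of unitaries $V \colon Y \times [0,1] \to U_n(\TT)$ with $V(\cdot,0) = 1$ and symbol $v := \sigma \circ V$ satisfying $v(y,t)\, \sigma(F(y))\, v(y,t)^{*} = H(y,t)$; the lifted homotopy will then be $\tilde{H}(y,t) := V(y,t) F(y) V(y,t)^{*}$. This decomposes into two subproblems: first, construct $v$ in $U_n(C(\T))$ with the prescribed conjugation property; second, lift $v$ to $V$ in $U_n(\TT)$ using the already-known fibration (\ref{fib1}), starting the lift at the identity.

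For the family $v$, I would use the classical formula $w(p,q) = (1-(p-q)^{2})^{-1/2}(qp + (1-q)(1-p))$, which yields a unitary conjugating $p$ to $q$ whenever $\|p-q\| < 1$. Using uniform continuity of $H$ on neighborhoods in $Y \times [0,1]$, one subdivides $[0,1]$ locally in $y$ into short sub-intervals on which the formula applies, and defines $v$ as the resulting ordered product along the subdivision. The main obstacle I anticipate is the parametrized gluing: verifying that this locally-defined $v$ assembles into a single globally continuous map on $Y \times [0,1]$ for an arbitrary topological space $Y$ (this is precisely the distinction between the Hurewicz and the Serre fibration property). This is exactly the type of patching addressed by the discussion surrounding Proposition~4.1 of \cite{AS69}, which when applied to the Toeplitz extension (\ref{tExt}) delivers the required $v$; once $v$ is in hand, (\ref{fib1}) yields the lift $V$ and the conjugation $\tilde{H} = V F V^{*}$ completes the argument.
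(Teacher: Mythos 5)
Your reduction at both ends is sound: the identification $u \leftrightarrow \tfrac12(1-u)$ of self-adjoint unitaries with projections, and the final step of lifting the conjugating family $v$ through the Hurewicz fibration (\ref{fib1}) with initial lift $1$ and setting $\tilde H = VFV^*$, both work. The gap is exactly the one you flag, and the citation you offer does not close it. The ordered product $w(H(y,t_{j}),H(y,t))\cdots w(H(y,t_0),H(y,t_1))$ depends on the chosen subdivision: two subdivisions yield unitaries that both conjugate $H(y,0)$ to $H(y,t)$, hence differ by a unitary commuting with $H(y,0)$ that need not be $1$. Since for an arbitrary test space $Y$ the subdivision can only be chosen locally in $y$ (there is no uniform modulus of continuity in $t$), the locally defined $v$'s genuinely disagree on overlaps, and a merely norm-continuous path of projections has no canonical, choice-free holonomy that would make them agree. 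This is precisely the Hurewicz-versus-Serre distinction. The discussion around Proposition~4.1 of \cite{AS69} does not supply a patching of conjugating families; what it supplies (and what the paper actually uses) is a local product structure for the map itself.

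The paper's proof runs the other way around the difficulty: since the base $U_n(C(\T))^{\sa}$ is metrizable, hence paracompact, it suffices by \cite{Spanier66} to show (\ref{fib2}) is a fiber bundle; local triviality is obtained from an explicit continuous local section of $\sigma$ (the linear section $f \mapsto T_f + k$ followed by $T \mapsto T\lvert T\lvert^{-1}$) combined with Wood's homogeneous-space description $U_n(\TT)^{\sa}_s \cong (U_n(\TT)/U_n(\TT)^{\fs})_1$ and Michael's selection theorem \cite{Wood, Michael59}, and then \cite{Steenrod51}. If you want to keep your direct lifting-function strategy, the cleanest repair is to prove only the \emph{local} homotopy lifting property: over an open set $U$ in the base of diameter less than $1$, every $H(y,t)$ lies within distance $1$ of $H(y,0)$, so the single-step formula $v(y,t) = w(H(y,0),H(y,t))$ is globally continuous on $Y \times [0,1]$ with no subdivision at all, and your argument goes through verbatim to show $p^{-1}(U) \to U$ has the lifting property for all $Y$. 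The uniformization theorem (a map that is a Hurewicz fibration over each member of an open cover of a paracompact base is a Hurewicz fibration, see \cite{Spanier66}) then finishes the proof. Either way, an appeal to paracompactness of the base is unavoidable, and your proposal as written never invokes it.
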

\begin{proof}
Since $U_n(C(\T))^{\sa}$ is a paracompact Hausdorff space, it is sufficient to show that (\ref{fib2}) is a fiber bundle \cite{Spanier66}.
Let $u$ be a self-adjoint unitary in $M_n(C(\T))$.
There exists a self-adjoint invertible lift $x \in M_n(\TT)$ of $u$ since a self-adjoint Fredholm operator can be perturbed to a self-adjoint invertible operator by a self-adjoint compact operator.
Then, the self-adjoint unitary $x\lvert x \lvert^{-1} \in M_n(\TT)$ satisfies $\sigma(x \lvert x\lvert^{-1}) = u$, and the map $\sigma \colon U_n(\TT)^{\sa} \to U_n(C(\T))^{\sa}$ is surjective.
It is now sufficient to show that, for any $s \in U_n(\TT)^\sa$, the map $\sigma \colon U_n(\TT)^{\sa}_s \to U_n(C(\T))^{\sa}_{\sigma(s)}$ is a fiber bundle.
Note that the map $\sigma \colon M_n(\TT) \to M_n(C(\T))$ has a continuous linear section given by compression, that is, mapping $f$ to $T_f$.
Let $k = s - T_{\sigma(s)}$ which is a self-adjoint compact operator, and let $l \colon M_n(C(\T)) \to M_n(\TT)$ be a map given by $l(f) = T_f +k$.
This map $l$ preserves self-adjoint elements and satisfies $l(\sigma(s)) = s$.
Combined with the map
$d \colon GL_n(\TT) \to U_n(\TT)$, $d(T) = T\lvert T\lvert^{-1}$,
we obtain a continuous local section of the map $\sigma \colon U_n(\TT)^{\sa}_s \to U_n(C(\T))^{\sa}_{\sigma(s)}$ in a neighborhood of $\sigma(s)$ that maps $\sigma(s)$ to $s$.
Let $\fs$ be an operator on $M_n(\TT)$ given by the conjugation of $s$.
Then, as in \cite{Wood}, the quotient map $GL_n(\TT)_1 \to (GL_n(\TT) / (GL_n(\TT))^\fs)_{1}$ has a continuous local section which follows from \cite{Michael59}.
Combined with the map $d$,
we obtain a continuous local section of the quotient map $U_n(\TT)_1 \to (U_n(\TT)/U_n(\TT)^\fs)_{1}$.
Let us consider the map
$U_n(\TT) \to U_n(\TT)^\sa$ given by $T \mapsto T s T^*$,
that comes from the action of unitaries to self-adjoint unitaries by conjugation.
Its stabilizer subgroup at $s$ is $U_n(\TT)^\fs$.
As in Lemma~$4.1$ of \cite{Wood}, each orbit of this action is open and induces a homeomorphism
$(U_n(\TT)/U_n(\TT)^\fs)_{1} \overset{\cong}{\longrightarrow} U_n(\TT)^\sa_s$.
Let $\ft$ be an operator on $M_n(C(\T))$ given by the conjugation of $\sigma(s)$.
We consider similar discussion for the algebra $C(\T)$ and obtain the following diagram,
\[\xymatrix{
 & U_n(\TT)^\sa_s \ar[r]^\sigma & U_n(C(\T))^\sa_{\sigma(s)} \ar@/_13pt/[l]_-{\text{loc. sect.}}  \\
U_n(\TT)_1 \ar[r] \ar@<-1ex>@{^{(}->}[d]& (U_n(\TT)/U_n(\TT)^\fs)_{1} \ar@/_13pt/[l]_-{\text{loc. sect.}} \ar[r]^{\bar{\sigma} \hspace{0.5cm}} \ar@{^{(}->}[d] \ar[u]^\cong_{\text{conj.}} & (U_n(C(\T))/U_n(C(\T))^{\ft})_{1} \ar[u]_\cong^{\text{conj.}} \\
GL_n(\TT)_1 \ar[r] \ar@<-1ex>[u]_-d & (GL_n(\TT) / (GL_n(\TT))^\fs)_{1} \ar@/^13pt/[l]^-{\text{loc. sect.}} & &
}\]
where $\bar{\sigma}$ is the map induced by $\sigma$.
Therefore, $U_n(\TT)_1 \to (U_n(C(\T))/U_n(C(\T))^{\ft})_{1}$ has a continuous local section and the result follows (see Sect.~$7.4$ of \cite{Steenrod51}).
\end{proof}

Let $i = 0$ or $1$, and let $(T^0, T^\infty)$ be an element in $GL^{(i)}_N(\Szi \bm{\otimes} C(X))$ for some $N \in \N$, and let $[(T^0, T^\infty)] \in K_i(\Szi \bm{\otimes} C(X))$ be its $K$-class.
Let $f = (\sigmaz \bm{\otimes} 1_{C(X)})(T^0) = (\sigmai \bm{\otimes} 1_{C(X)})(T^\infty)$, which is an element in $C(\T^2 \times X, GL^{(i)}_N(\C))$.
Let $\pi \colon \T^2 \times X \to X$ be the projection.

\begin{lemma}\label{lemma2}
For a sufficiently large integer $N' \geq N$, there exists a continuous map $g \colon X \to GL^{(i)}_{N'}(\C)$ such that $f \oplus I^{(i)}_{N'-N}$ is homotopic to $\pi^* g$ in $C(\T^2 \times X, GL^{(i)}_{N'}(\C))$.
We take a path $\{ g_t \}_{0 \leq t \leq 1}$ from $g_0 = f \oplus I^{(i)}_{N'-N}$ to $g_1 = \pi^* g$ in $C(\T^2 \times X, GL^{(i)}_{N'}(\C))$.
Then, there also exists a path $\{ (T^0_t,T^\infty_t) \}_{0 \leq t \leq 1}$ in $GL^{(i)}_{N'}(\Szi \bm{\otimes} C(X))$ satisfying the following conditions:
\begin{enumerate}
\renewcommand{\labelenumi}{(\roman{enumi})}
\item $(T^0_0, T^\infty_0) = (T^0 \oplus I^{(i)}_{N'-N}, T^\infty \oplus I^{(i)}_{N'-N})$.
\item $(\sigmaz \bm{\otimes} 1_{C(X)})(T^0_t) = (\sigmai \bm{\otimes} 1_{C(X)})(T^\infty_t) = g_t$ for $0 \leq t \leq 1$.
\item There exist $K \in \N$ and a continuous map $t^0 \colon \T \times X \to GL^{(i)}_{KN'}(\C)$ such that for $(z, x) \in \T \times X$,
$T^0_1(z,x) = t^0(z,x) \oplus g(x)$ as an operator on $l^2(\Z_{\geq 0}; \C^{n_iN'}) \cong \C^{n_iKN'} \oplus l^2(\Z_{\geq K}; \C^{n_iN'})$.
\item There exist $L \in \N$ and a continuous map $t^\infty \colon \T \times X \to GL^{(i)}_{LN'}(\C)$ such that for $(w, x) \in \T \times X$,
$T^\infty_1(w,x) = t^\infty(w,x) \oplus g(x)$ as an operator on $l^2(\Z_{\geq 0}; \C^{n_iN'}) \cong \C^{n_iLN'} \oplus l^2(\Z_{\geq L}; \C^{n_iN'})$.
\end{enumerate}
In $\rm(iii)$, we consider $T^0_1 \in GL^{(i)}_{N'}(\TTz \bm{\otimes} C(X)) \cong GL^{(i)}_{N'}(C(\T) \bm{\otimes} \TT \bm{\otimes} C(X))$ as a family of Toeplitz operators parametrized by $\T \times X$, and similarly for $\rm(iv)$.
\end{lemma}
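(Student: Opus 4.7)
The plan is to prove the three items by a three-stage strategy: first, deform the symbol $f$ to a pullback from $X$ via a homotopy $g_t$; second, lift $g_t$ to a homotopy of invertibles in $\Szi \bm{\otimes} C(X)$ via a fibration argument; and third, modify the endpoint within the fiber over $\pi^* g$ to attain the standard form in (iii) and (iv).

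For the first step, I would show that the class $[f] \in K^{-i}(\T^2 \times X) = [\T^2 \times X, GL^{(i)}_{N'}(\C)]$ (for $N'$ large) lies in $\pi^* K^{-i}(X)$. The invertibility of $T^0$ furnishes a class in $K_i(\TTz \bm{\otimes} C(X))$ mapping to $[f]$ under $(\sigmaz \bm{\otimes} 1)_*$. Since $\TTz \cong C(\T) \bm{\otimes} \TT$ with $K_0(\TT) = \Z$ and $K_1(\TT) = 0$, the image of $(\sigmaz \bm{\otimes} 1)_*$ in the decomposition $K^{-i}(\T^2 \times X) = K^{-i}(\T \times X) \oplus K^{-i-1}(\T \times X)$ (coming from the Toeplitz $\T$-factor of $\TTz$) is exactly the first summand, i.e., classes trivial in the boundary-perpendicular $\T$-direction of $\TTz$. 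The analogous statement for $T^\infty$ constrains $[f]$ to be trivial in the other $\T$-direction. The refined Künneth-type decomposition
\[
K^{-i}(\T^2 \times X) = K^{-i}(X) \oplus K^{-i-1}(X) \oplus K^{-i-1}(X) \oplus K^{-i-2}(X)
\]
shows that the intersection of these two partial-triviality subgroups is exactly $K^{-i}(X)$, so $[f] = \pi^* [g]$ for some $g : X \to GL^{(i)}_{N'}(\C)$, yielding a path $g_t$ from $f \oplus I^{(i)}_{N'-N}$ to $\pi^* g$.

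For the second step, I would apply the Hurewicz fibration property of $\sigma_\mathcal{S} \bm{\otimes} 1$ on (self-adjoint) invertibles, the natural extension of Lemma~\ref{fiblem} (and Proposition~\ref{refib}) to the pullback algebra $\Szi \bm{\otimes} C(X)$: compression furnishes continuous linear sections of $\sigmaz$ and $\sigmai$, hence of $\sigma_\mathcal{S}$, and the proof of Lemma~\ref{fiblem} via polar decomposition and Michael selection carries through verbatim. The HLP then lifts $g_t$ to a path $(T^0_t, T^\infty_t)$ starting at $(T^0 \oplus I^{(i)}_{N'-N}, T^\infty \oplus I^{(i)}_{N'-N})$, yielding (i) and (ii). For the third step, at $t = 1$ view $T^0_1$ through the isomorphism $\TTz \bm{\otimes} C(X) \cong C(\T \times X, \TT)$ as a continuous family of Toeplitz operators whose symbol is the $\T$-constant matrix $g(x)$; the difference $k^0(z, x) := T^0_1(z, x) - g(x) \otimes I_{l^2(\Z_{\geq 0})}$ is then a continuous $\K$-valued map on $\T \times X$. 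Using the norm-density in $\K$ of operators supported on $\C^{n_i K N'} \subset l^2(\Z_{\geq 0}; \C^{n_i N'})$ as $K \to \infty$, I would approximate $k^0$ uniformly by a continuous finite-corner $\tilde{k}^0$; for $K$ large enough that the linear interpolation $(1 - s) T^0_1 + s (g \otimes I + \tilde{k}^0)$ stays invertible for all $s \in [0,1]$, concatenating the interpolation onto the lift produces a new endpoint of the form $t^0(z, x) \oplus g(x)$ required in (iii). The parallel construction on the $T^\infty$-side gives (iv); since both concluding homotopies preserve the common symbol $\pi^* g$, condition (ii) is preserved.

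The hard part will be the third step: ensuring the finite-corner approximation remains uniformly invertible over the compact family $\T \times X$ (resp.\ $X$) and carrying out the $T^0_1$ and $T^\infty_1$ modifications simultaneously without interfering with each other at the symbol level. The combination of norm-density of finite-corner perturbations in $\K$, openness of the invertible set in $GL^{(i)}_{N'}(\TT)$, and compactness of $\T \times X$ should make this work, but the approximation and interpolation arguments must be made uniform in the parameter rather than only pointwise.
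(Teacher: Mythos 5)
Your proposal is correct and follows essentially the same three-stage argument as the paper: a K\"unneth computation showing $[f]\in\mathrm{Im}(\pi^*)$, the homotopy lifting property for the symbol map, and a finite-corner (finite-rank) perturbation of the compact part at the endpoint, made uniform over the compact parameter space. The only cosmetic difference is that the paper lifts $g_t$ separately through $\sigmaz \bm{\otimes} 1$ and $\sigmai \bm{\otimes} 1$ (compatibility being automatic from the pullback structure of $\Szi$), which avoids having to verify the fibration property for $\sigma_{\mathcal S} \bm{\otimes} 1$ on the pullback algebra itself, and it first retracts the invertibles to unitaries so that the fibrations (\ref{fib1}) and (\ref{fib2}) apply directly.
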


\begin{proof}
We first take the deformation retraction of invertible elements $T^0$ and $T^\infty$ to unitaries; that is, for $0 \leq t \leq \frac{1}{3}$ and $j \in \{0, \infty\}$, let
$Q^j_t = (1-3t)T^j + 3t T^j \lvert T^j \lvert^{-1}$
and $f_t = (\sigma^0 \bm{\otimes} 1_{C(X)})(Q^0_t) = (\sigma^\infty \bm{\otimes} 1_{C(X)})(Q^\infty_t)$.
We next consider the following two homomorphisms,
\begin{equation*}
	(\sigma_\mathcal{S} \bm{\otimes} 1_{C(X)})_* \colon K_i(\Szi \bm{\otimes} C(X)) \to K_i(C(\T^2 \times X)) \cong K^{-i}(\T^2 \times X),
\end{equation*}
\begin{equation}\label{piast}
	\pi^* \colon K^{-i}(X) \to K^{-i}(\T^2 \times X).
\end{equation}
Note that $K_i(\Szi) = \Z$ for $i=0,1$, and that the map $(\sigma_\mathcal{S} \bm{\otimes} 1_{C(X)})_*$ is computed by using the K\"unneth theorem \cite{Sc82,Pa90}.
The images of the above two maps coincide.
Since
\begin{equation*}
	[f] = (\sigma_\mathcal{S} \bm{\otimes} 1_{C(X)})_*([T^0,T^\infty]) \in \mathrm{Im}((\sigma_\mathcal{S} \bm{\otimes} 1_{C(X)})_*) = \mathrm{Im}(\pi^*),
\end{equation*}
there exist $g \colon X \to U^{(i)}_{N'}(\C)$ for some $N'$ satisfying
$[f] = [f_{\frac{1}{3}}] = \pi^*[g]$ in $K^{-i}(\T^2 \times X)$.
Note that $\pi^*$ is injective, and the $K$-class $[g]$ of $g$ is uniquely determined.
Since the equivalence relation used to define the $K$-group $K^{-i}(\T^2 \times X)$ is generated by homotopy and stabilization, if we take $N'$ to be sufficiently large, there also exists a homotopy in $C(\T^2 \times X, U^{(i)}_{N'}(\C))$ from $f_{\frac{1}{3}} \oplus I^{(i)}_{N'-N}$ to $\pi^* g$.
Let $T^0_t = Q^0_t \oplus I^{(i)}_{N'-N}$, $T^\infty_t = Q^\infty_t \oplus I^{(i)}_{N'-N}$ and $g_t = f_t \oplus I^{(i)}_{N'-N}$ for $0 \leq t \leq \frac{1}{3}$, and for $\frac{1}{3} \leq t \leq \frac{2}{3}$, let $g_t$ be this homotopy from $f_{\frac{1}{3}} \oplus I^{(i)}_{N'-N}$ to $\pi^* g$.
By the homotopy lifting property of fibration (\ref{fib1}), there exists a homotopy $T^0_t \colon \T \times X \to U^{(i)}_{N'}(\TT)$ for $\frac{1}{3} \leq t \leq \frac{2}{3}$ that lifts $g_t$ starting from $T^0_{\frac{1}{3}} = Q^0_{\frac{1}{3}} \oplus I^{(i)}_{N'-N}$.
Note that $\sigma^0(T^0_{\frac{2}{3}}(x))(z) = g_{\frac{2}{3}}(z,x) = g(x)$ is independent of $z \in \T$.
By a similar discussion for $T^\infty$, we obtain a homotopy $T^\infty_t \colon \T \times X \to U^{(i)}_{N'}(\TT)$ for $\frac{1}{3} \leq t \leq \frac{2}{3}$ starting from $T^\infty_{\frac{1}{3}} = Q^\infty_{\frac{1}{3}} \oplus I^{(i)}_{N'-N}$ and lifts $g_t$.
Since $\sigma^0(T^0_t(x)) = \sigma^\infty(T^\infty_t(x))$ and $\sigma^0(T^0_1(x))(z) = \sigma^\infty(T^\infty_1(x))(w) = g(x)$, we obtain the following:
\begin{itemize}
\item	$(T^0_t, T^\infty_t) \in GL^{(i)}_{N'}(\Szi \bm{\otimes} C(X))$ for $0 \leq t \leq \frac{2}{3}$,
\item	$T^0_\frac{2}{3}(z,x) = g(x) + k^0(z,x)$, where $k^0(z,x)$ is a compact operator,
\item	$T^\infty_\frac{2}{3}(w,x) = g(x) + k^\infty(w,x)$, where $k^\infty(w,x)$ is a compact operator.
\end{itemize}
For $\frac{2}{3} \leq t \leq 1$, let $g_t = g_{\frac{2}{3}}$.
We perturb $k^0$ and $k^\infty$ to families of (self-adjoint) finite rank operators to obtain homotopies $T^0_t$ and $T^\infty_t$ for $\frac{2}{3} \leq t \leq 1$ in the space of (self-adjoint) invertible half-plane Toeplitz operators and obtain the desired result.
\end{proof}

For simplicity, in what follows, we assume that our representative $(T^0,T^\infty)$ of the $K$-class $[(T^0,T^\infty)]$ in $K_i(\Szi \bm{\otimes} C(X))$ is sufficiently stabilized to take $N = N'$ in Lemma~\ref{lemma2}.
Under this assumption, we take a continuous map $g$, a homotopy $\{ g_t \}_{0 \leq t \leq 1}$ from $f$ to $\pi^* g$ in $C(\T^2 \times X, GL_N^{(i)}(\C))$, a homotopy $\{ (T^0_t, T^\infty_t) \}_{0 \leq t \leq 1}$ and continuous maps $t^0$ and $t^\infty$ that satisfy the conditions in Lemma~\ref{lemma2}.
$t^0$ and $t^\infty$ provide elements $[t^0]$ and $[t^\infty]$ in $K^{-i}(\T \times X)$.
Let $r \colon \Z \times \Z_{\geq 0} \to \Z_{\geq 0} \times \Z$ be the map defined by $r(m,n)=(n,-m)$ and $r^* \colon l^2(\Z_{\geq 0} \times \Z, \C^{n_iN}) \to l^2(\Z \times \Z_{\geq 0}, \C^{n_iN})$ be the induced Hilbert space isomorphism.
For an operator $T \in M_{n_iN}(\TT^\infty)$, we write
\begin{equation}\label{tilde1}
	\widetilde{T} = r^* \circ T \circ (r^*)^{-1} \in M_{n_iN}(\TT^0).
\end{equation}
By taking this transformation for $T^\infty_1(w,x) = t^\infty(w,x) \oplus g(x)$ in Lemma~\ref{lemma2}, we obtain $\tilde{T}^\infty_1(z,x) = \tilde{t}^\infty(z,x) \oplus g(x)$ where $\tilde{t}^\infty \colon \T \times X \to GL^{(i)}_{LN'}(\C)$.
Then, we have $\partial^{\text{T}}([\tilde{t}^\infty]) = - \partial^{\text{T}}([t^\infty])$ in $K^{-i+1}(X)$ since the orientation of $\T$ is reversed through (\ref{tilde1}).
For these elements, the following holds \cite{CDS72}:

\begin{proposition}\label{CDSthm}
	$\partial^{\mathrm{qT}}([(T^0,T^\infty)]) = (\partial^{\mathrm{T}} \oplus - \partial^{\mathrm{T}})([t^0], [\tilde{t}^\infty])$.
\end{proposition}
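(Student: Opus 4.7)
The plan is to implement the Coburn--Douglas--Singer strategy \cite{CDS72}: use the trivialization produced by Lemma~\ref{lemma2} to write down an explicit quarter-plane lift of $(T^0_1, T^\infty_1)$ and compute its index directly. By homotopy invariance of the boundary map, I may replace $(T^0, T^\infty)$ with the endpoint $(T^0_1, T^\infty_1)$ of the path from Lemma~\ref{lemma2} without changing $\partial^{\mathrm{qT}}$. After this homotopy, over every $z \in \T$ (resp.\ $w \in \T$) the Toeplitz operator $T^0_1(z, x)$ (resp.\ $T^\infty_1(w, x)$) literally splits as $t^0(z, x) \oplus g(x)$ (resp.\ $t^\infty(w, x) \oplus g(x)$) on $\C^{n_i K N'} \oplus l^2(\Z_{\geq K}; \C^{n_i N'})$, and the common Calkin image is the pulled-back symbol $\pi^* g$.

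Next I would construct a natural lift $Q \in M_{N'}(\TTzi \bm{\otimes} C(X))$ of $(T^0_1, T^\infty_1)$ by an orthogonal decomposition of $\HHzi$ into a bulk region $\{(m, n) : m \geq L,\ n \geq K\}$ on which $Q$ acts as multiplication by the (constant and invertible) matrix $g(x)$; a horizontal strip $\{0 \leq n < K\}$ on which $Q$ is built from $t^0$ regarded as a family of Toeplitz operators in the $m$-variable; a vertical strip $\{0 \leq m < L\}$ on which $Q$ is built from $t^\infty$ after the rotation $r$ of~(\ref{tilde1}) so that the strip is in Toeplitz form; and a finite corner $\{0 \leq m < L,\ 0 \leq n < K\}$ on which $Q$ is chosen arbitrarily (self-adjointly when $i = 0$) invertibly, hence contributing only a finite-rank perturbation. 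One must verify that these blocks genuinely assemble into an element of $\TTzi \bm{\otimes} C(X)$ whose image under $\gamma \bm{\otimes} 1_{C(X)}$ recovers $(T^0_1, T^\infty_1)$, with the cross-terms between blocks lying in $\K \bm{\otimes} C(X)$.

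With $Q$ in hand, $\partial^{\mathrm{qT}}([(T^0_1, T^\infty_1)])$ is by definition the $K$-theoretic index of $Q$, which by additivity decomposes as the sum of the three block contributions. The bulk block is invertible and contributes zero; the horizontal strip, after peeling off the invertible constant tail $g(x)$ (possible without residue in $K^{-i+1}(X)$ because $\pi^* g$ itself lifts to the manifestly invertible multiplication operator on $\HHzi$), is the family Toeplitz operator with symbol $t^0$ and contributes $\partial^{\mathrm{T}}([t^0])$; the vertical strip contributes $\partial^{\mathrm{T}}([\tilde{t}^\infty]) = -\partial^{\mathrm{T}}([t^\infty])$, where the minus sign is precisely the orientation reversal on the boundary circle $\T$ produced by the rotation $r$. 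Summing the two nonzero contributions reproduces the claimed identity.

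The main obstacle I expect is the careful bookkeeping at the corner and at the interfaces between the strips and the bulk: one must confirm that the patched operator really lies in $\TTzi \bm{\otimes} C(X)$ rather than merely in the bounded operators, that the interface cross-terms are genuinely compact, and that the residues produced when stripping off the shared tail $g(x)$ from the two half-plane pieces cancel rather than accumulate. All of these are clean consequences of the direct-sum structure of $T^0_1$ and $T^\infty_1$ furnished by Lemma~\ref{lemma2}, but they need to be checked by hand. In the self-adjoint case $i = 0$, one additionally uses the self-adjoint form of Lemma~\ref{lemma2} together with the fibration in Lemma~\ref{fiblem} to ensure that $Q$ can be constructed self-adjoint throughout, so that the same decomposition computes the $K^1(X)$-valued index.
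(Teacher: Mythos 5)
Your proposal is essentially the paper's proof: the paper decomposes $\HHzi \otimes \C^{n_iN}$ into a horizontal strip $\HH_1$, a vertical strip $\HH_2$ and an invertible bulk $\HH_3$, defines the block-diagonal lift $A(x) = T_{t^0(x)} \oplus T_{t^\infty(x)} \oplus g(x)$ with $\gamma(A) = (T^0_1, T^\infty_1)$, and sums the three index contributions exactly as you describe (your extra finite corner block and the compactness checks at the interfaces are harmless, since $A$ is a genuine direct sum). One sign slip in your last step: in its natural coordinates the vertical strip contributes $\partial^{\mathrm{T}}([t^\infty]) = -\partial^{\mathrm{T}}([\tilde{t}^\infty])$, not $\partial^{\mathrm{T}}([\tilde{t}^\infty])$; with that correction the total $\partial^{\mathrm{T}}([t^0]) + \partial^{\mathrm{T}}([t^\infty])$ is precisely $(\partial^{\mathrm{T}} \oplus -\partial^{\mathrm{T}})([t^0],[\tilde{t}^\infty])$ as claimed.
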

\begin{proof}
We decompose the Hilbert space $l^2(\Z_{\geq 0} \times \Z_{\geq 0}, \C^{n_iN})$ into a direct sum of three closed subspaces,
\begin{equation*}
	l^2(\Z_{\geq 0} \times \Z_{\geq 0}, \C^{n_iN}) = \HH_1 \oplus \HH_2 \oplus \HH_3,
\end{equation*}
where
$\HH_1 = l^2(\Z_{\geq 0} \times \{ 0, \ldots, K-1 \}, \C^{n_iN})$,
$\HH_2 = l^2(\{ 0, \ldots, L-1 \} \times \Z_{\geq K}, \C^{n_iN})$ and
$\HH_3 = l^2(\Z_{\geq K} \times \Z_{\geq L}, \C^{n_iN})$.
We consider the following family of quarter-plane Toeplitz operators:
\begin{equation*}
A(x) =
\left\{
\begin{aligned}
T_{t^0(x)} & \hspace{3mm} \text{on} \ \HH_1 \cong l^2(\Z_{\geq 0}, \C^{n_iKN}),\\
T_{t^\infty(x)} & \hspace{3mm} \text{on} \ \HH_2 \cong l^2(\Z_{\geq 0}, \C^{n_iLN}),\\
g(x) & \hspace{3mm} \text{on} \ \HH_3,
\end{aligned}
\right.
\end{equation*}
where $x \in X$ (see Figure~\ref{Fig5}).
\begin{figure}
  \centering
  \includegraphics[width=5cm]{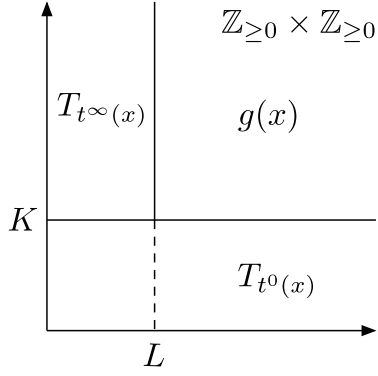}
  \caption{Schematic picture for the quarter-plane Toeplitz operator $A(x)$}
    \label{Fig5}
\end{figure}
$A$ is a family of (self-adjoint) Fredholm operators that satisfies $\gamma(A) = (T^0_1, T^\infty_1)$.
Since $g(x)$ on $\HH_3$ is (self-adjoint) invertible, the following equality holds in the $K$-group $K^{-i+1}(X)$.
\begin{gather*}
	(\partial^{\text{T}} \oplus - \partial^{\text{T}})([t^0], [\tilde{t}^\infty])
		= \partial^{\text{T}}([t^0]) + \partial^{\text{T}}([t^\infty])
		= [T_{t^0}] + [T_{t^\infty}]
		= [T_{t^0}] + [T_{t^\infty}] + [g]\\
		= [A]
		= \partial^{\text{qT}}([(T^0_1,T^\infty_1)])
		= \partial^{\text{qT}}([(T^0,T^\infty)]).
\end{gather*}
\end{proof}

Let $\{a_t\}_{0 \leq t \leq 1}$ (resp. $\{b_t\}_{0 \leq t \leq 1}$) be a path from $t^0$ (resp. $\tilde{t}^\infty$) to $\overbrace{g \oplus \cdots \oplus g}^{K \ (\text{resp.} \ L)}$ in $C(\T \times X, M^{(i)}_{n_iKN}(\C))$ (resp. $C(\T \times X, M^{(i)}_{n_iLN}(\C))$).
These paths exist since $M^{(i)}_{n}(\C)$ is contractible.
We now consider the family of Toeplitz operators $F \colon [-2,2] \times \T \times X \to M_{n_i N}(\TT)$
defined by
\begin{equation*}
F(s, z, x) =
\left\{
\begin{aligned}
T^0_{s+2}(z,x), &\hspace{3mm} -2 \leq s \leq -1,\\
a_{s+1}(z,x) \oplus g(x),
& \hspace{3mm} -1 \leq s \leq 0,\\
b_{1-s}(z,x) \oplus g(x),
& \hspace{3mm} 0 \leq s \leq 1,\\
\widetilde{T}^\infty_{2-s}(z,x), & \hspace{3mm} 1 \leq s \leq 2.
\end{aligned}
\right.
\end{equation*}
Note that $F(0,z,x) = g(x)$ is (self-adjoint) invertible and independent of $z \in \T$.
$F$ is a family of (self-adjoint) Fredholm operators on $[-2,2] \times \T \times X$, which is invertible on $([-2,-1] \sqcup [1,2]) \times \T \times X$,
therefore, it defines the following element of the relative $K$-group,
\begin{equation*}
	[F] \in K^{-i+1}([-2,2] \times \T \times X, ([-2,-1] \sqcup [1,2]) \times \T \times X).
\end{equation*}
We write
\begin{equation*}
	[[F]] \in K^{-i+1}([-2,2] \times \T \times X, ([-2,-1] \sqcup [1,2]) \times \T \times X) \bigl/ \Ker(\alpha)
\end{equation*}
for the class of $[F]$ in the quotient group, and we define
\begin{equation*}
	\psi \colon K_i(\Szi \bm{\otimes} C(X)) \to K^{-i+1}([-2,2] \times \T \times X, ([-2,-1] \sqcup [1,2]) \times \T \times X) \bigl/ \Ker(\alpha)
\end{equation*}
by $\psi[(T^0,T^\infty)] = [[F]]$.

\begin{proposition}
	$\psi$ is a well-defined group homomorphism.
\end{proposition}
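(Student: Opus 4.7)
The plan is to establish the identity $\alpha([F]) = \partial^{\text{qT}}([(T^0, T^\infty)])$ in $K^{-i+1}(X)$ for any valid choice of auxiliary data $(g, g_t, T^0_t, T^\infty_t, a_t, b_t)$ used to construct $F$. Granting this, both well-definedness and the homomorphism property follow at once: $\psi$ coincides with $\bar{\alpha}^{-1} \circ \partial^{\text{qT}}$ (since $\bar{\alpha}$ is an isomorphism by the paragraph immediately following diagram (\ref{maindiagfamily})), and this composite is a group homomorphism depending only on $[(T^0,T^\infty)]$. In particular, two choices yielding $F$ and $F'$ satisfy $\alpha([F] - [F']) = 0$, hence $[F] - [F'] \in \Ker(\alpha)$ and $[[F]] = [[F']]$; invariance under changing the representative follows because $\partial^{\text{qT}}$ is already defined on $K$-classes.

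To prove $\alpha([F]) = \partial^{\text{qT}}([(T^0, T^\infty)])$, I would combine Proposition~\ref{CDSthm} with the commutativity $\partial^{\text{T}} \oplus -\partial^{\text{T}} = \alpha \circ \partial^{\text{pair}}$ from diagram (\ref{diagpsi}). It then suffices to prove the relative $K$-class identity
\begin{equation*}
	[F] = \partial^{\text{pair}}([t^0], [\tilde{t}^\infty]) \quad \text{in} \quad K^{-i+1}([-2,2] \times \T \times X, \{\pm 2\} \times \T \times X).
\end{equation*}
Using the deformation retract $\{\pm 2\} \hookrightarrow [-2,-1] \sqcup [1,2]$, the target is isomorphic to $K^{-i+1}([-2,2] \times \T \times X, ([-2,-1] \sqcup [1,2]) \times \T \times X)$, where $F$ is already invertible on the two outer intervals. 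On the central region $(-1, 1) \times \T \times X$, $F$ is a finite-rank perturbation of the invertible operator $g(x) \otimes 1_{l^2(\Z_{\geq 0})}$; the perturbation is governed by the matrix families $a_{s+1}$ and $b_{1-s}$, which realize paths in the ambient contractible matrix space connecting $t^0$ and $\tilde{t}^\infty$ to multiples of $g$. Unwinding the standard clutching description of $\partial^{\text{pair}}$ then identifies this Fredholm data with $\partial^{\text{pair}}([t^0], [\tilde{t}^\infty])$; the invertible deformations on $[-2,-1]$ and $[1,2]$ do not alter the class.

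For the homomorphism property, one can alternatively give a direct representative-level argument: given the direct sum $(T^0_1 \oplus T^0_2, T^\infty_1 \oplus T^\infty_2)$, choose the full auxiliary data as the direct sum of the data selected for each summand, so that the resulting $F_{1\oplus 2}$ equals $F_1 \oplus F_2$ up to a unitary rearrangement of the Hilbert space. Direct sum descends to addition in the relative $K$-group and to its quotient, giving $\psi([(T^0_1, T^\infty_1)] + [(T^0_2, T^\infty_2)]) = \psi([(T^0_1, T^\infty_1)]) + \psi([(T^0_2, T^\infty_2)])$.

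The main obstacle is the geometric identification $[F] = \partial^{\text{pair}}([t^0], [\tilde{t}^\infty])$: one must carefully trace the Fredholm family $F$ through the excision-type isomorphism that cuts off the invertible collars on $[-2,-1]$ and $[1,2]$ and then recognize the remaining central data as the image of the connecting map of the pair. A more laborious alternative is to verify well-definedness directly by interpolating between two sets of choices—using Lemma~\ref{fiblem} and the Hurewicz fibration property of (\ref{fib1}), (\ref{fib2}) to connect different lifts $T^0_t, T^\infty_t$, and the contractibility of $M^{(i)}_n(\C)$ to connect different paths $a_t, b_t$—and checking at each step that the change in $[F]$ lies in $\Ker(\alpha)$.
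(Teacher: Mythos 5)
Your proposal is correct and follows essentially the same route as the paper: you reduce everything to the identity $\bar{\alpha}([[F]])=\partial^{\text{qT}}([(T^0,T^\infty)])$, obtained by excising the invertible collars, recognizing the central data $\tilde F$ as a lift of $t^0\sqcup\tilde t^\infty$ computing $\partial^{\text{pair}}([t^0],[\tilde t^\infty])$, and invoking Proposition~\ref{CDSthm} together with the injectivity of $\bar{\alpha}$. The paper's proof is exactly this argument, so no further comment is needed.
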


\begin{proof}
We show that the element $[[F]]$ depends only on the $K$-class $[(T^0,T^\infty)]$ in $K_1(\Szi \bm{\otimes} C(X))$.
By the excision property of topological $K$-theory, we can remove components of $F$ corresponding to subspaces $[-2,-1)$ and $(1, 2]$ of $[-2,2]$.
That is, the inclusion $i \colon [-1,1] \hookrightarrow [-2,2]$ induces an isomorphism
\begin{gather*}
	(i \times \id_{\T \times X})^* \colon K^{-i+1}([-2,2] \times \T \times X, ([-2,-1] \sqcup [1,2]) \times \T \times X)\\
		\hspace{1.5cm} \overset{\cong}{\longrightarrow} K^{-i+1}([-1,1] \times \T \times X, \{\pm1\} \times \T \times X),
\end{gather*}
and satisfies $(i \times \id_{\T \times X})^*[F] = [F\lvert_{[-1,1] \times \T \times X}]$.
We identify these two $K$-groups by this map.
For simplicity of description, we take $K$ and $L$ in Lemma~\ref{lemma2} sufficiently large and assume $K=L$.
Let us consider the map $\tilde{F} \colon [-1,1] \times \T \times X \to M_{n_i K N}^{(i)}(\C)$ given by
\begin{equation*}
\tilde{F}(s, z, x) =
\left\{
\begin{aligned}
a_{s+1}(z,x),
& \hspace{3mm} -1 \leq s \leq 0,\\
b_{1-s}(z,x),
& \hspace{3mm} 0 \leq s \leq 1.
\end{aligned}
\right.
\end{equation*}
Then, $F\lvert_{[-1,1] \times \T \times X} = \tilde{F} \oplus g$, and $[F\lvert_{[-1,1] \times \T \times X}] = [\tilde{F} \oplus g] = [\tilde{F} \oplus I^{(i)}_N]$
in the group $K^{-i+1}(([-1,1], \{\pm1\}) \times \T \times X)$ by Kuiper's theorem.
Since $\tilde{F}(s=-1) = t^0$ and $\tilde{F}(s=1) = \tilde{t}^\infty$, $\tilde{F}$ is a (self-adjoint) lift of $t^0 \sqcup \tilde{t}^\infty \colon \{\pm1\} \times \T \times X \to GL_{KN}^{(i)}(\C)$ through the restriction map,
\begin{equation*}
	\mathrm{res} \colon M_{n_iKN}(C([-1,1] \times \T \times X)) \to M_{n_iKN}(C(\{ \pm 1\} \times \T \times X)).
\end{equation*}
Therefore, we have
$\partial^{\text{pair}}([t^0], [\tilde{t}^\infty]) = [F\lvert_{[-1,1] \times \T \times X}]$.
By the definition of the map $\bar{\alpha}$ and Proposition~\ref{CDSthm}, we have
\begin{equation*}
	\bar{\alpha}([[F]]) = (\partial^{\text{T}} \oplus - \partial^{\text{T}})([t^0], [\tilde{t}^\infty]) = \partial^{\text{qT}}([(T^0,T^\infty)]).
\end{equation*}
Since $\partial^{\text{qT}}([(T^0,T^\infty)])$ depends only on the $K$-class $[(T^0,T^\infty)]$ and $\bar{\alpha}$ is an isomorphism, the element $[[F]]$ is independent of the choice made to construct $F$, and depends only on the $K$-class $[(T^0,T^\infty)]$.
\end{proof}

\subsection{Construction of the Map $\phi$}
\label{Sect4.2}
In this subsection, we construct the following group homomorphism $\phi$ in the diagram (\ref{maindiagfamily}).

Let $f \colon \tilde{\SSS}^3 \times X \to GL^{(i)}_N(\C)$ be a continuous map that represents $[f] \in K^{-i}(\tilde{\SSS}^3 \times X)$, and let $f_r = f\lvert_{\T^2 \times X}$.
The inclusion $\T^2 \times X \hookrightarrow \tilde{\SSS}^3 \times X$ induces the homomorphism $K^{-i}(\tilde{\SSS}^3 \times X) \to K^{-i}(\T^2 \times X)$, and the image of this map is the same as that of $\pi^*$ in (\ref{piast}).
Therefore, as in the proof of Lemma~\ref{lemma2}, there exists $N' \geq N$, a continuous map $g \colon X \to GL^{(i)}_{N'}(\C)$ and a homotopy $\{ g_t \}_{0 \leq t \leq 1}$ from $g_0 = f_r \oplus I^{(i)}_{N'-N}$ to $g_1 = \pi^* g$ in $C(\T^2 \times X, GL^{(i)}_{N'}(\C))$.
As in Sect.~\ref{Sect4.1}, we assume that the representative $f$ of the $K$-class $[f]$ is sufficiently stabilized to take $N' = N$ for simplicity.

Let $R \colon \T^2 \to \T^2$ be an orientation-preserving homeomorphism given by $R(z,w) = (w, \bar{z})$.
Corresponding to the notation (\ref{tilde1}) for half-plane Toeplitz operators, we write $\tilde{k} = (R \times \id_X)^* k$ for a matrix-valued continuous function $k$ on $\T^2 \times X$; that is $\tilde{k}(z,w,x)=k(w,\bar{z},x)$.
By using this notation, let $G \colon [-2,2] \times \T^2 \times X \to GL^{(i)}_N(\C)$ be a continuous map defined as follows:
\begin{equation}\label{Tfamily}
G(s,z,w,x) =
\left\{
\begin{aligned}
f(z, (2+s)w,x), &\hspace{3mm} -2 \leq s \leq -1,\\
g_{1+s}(z,w,x), &\hspace{3mm} -1 \leq s \leq 0,\\
\tilde{g}_{1-s}(z,w,x) = g_{1-s}(w,\bar{z},x), & \hspace{3mm} 0 \leq s \leq 1,\\
\tilde{f}(z,(2-s)w,x) = f((2-s)w,\bar{z},x), & \hspace{3mm} 1 \leq s \leq 2.
\end{aligned}
\right.
\end{equation}
Since $g_1(z,w,x) = g(x)$ does not depend on $z$ and $w$, this family $G$ defines a continuous map on $[-2,2] \times \T^2 \times X$.
We consider a family of Toeplitz operators $T_G$ by using second $\T$ parametrized by $w$ for the symbols of Toeplitz operators; that is, for $(s,z,x) \in [-2,2] \times \T \times X$, we set
\begin{equation*}
	T_G(s,z,x) = T_{G(s,z,\cdot,x)}.
\end{equation*}
$T_G$ is a family of Toeplitz operators of invertible symbols and therefore a family of Fredholm operators parametrized by $[-2,2] \times \T \times X$.
When $s=\pm 2$, these Toeplitz operators have constant invertible symbols and are invertible.
This family $T_G$ provides the element $[T_G]$ of the $K$-group $K^{-i+1}(([-2,2],\{ \pm 2 \}) \times \T \times X)$.
We define 
\begin{equation*}
	\phi \colon K^{-i}(\tilde{\SSS}^3 \times X) \to K^{-i+1}([-2,2] \times \T \times X, \{ \pm 2\} \times \T \times X) \bigl/ \Ker(\alpha)
\end{equation*}
by $\phi([f]) = [[T_G]]$.

\begin{proposition}\label{welldefphi}
	$\phi$ is a well-defined group homomorphism.
\end{proposition}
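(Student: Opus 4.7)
The plan is to show that $[[T_G]]$ depends only on $[f]$, so that $\phi$ is well-defined; additivity then reads off from block-diagonal representatives. Since $\bar{\alpha}$ is an isomorphism, it suffices to show that $\bar{\alpha}([[T_G]]) \in K^{-i+1}(X)$ depends only on $[f]$. In direct analogy with the identity $\bar{\alpha}([[F]])=\partial^{\text{qT}}([(T^0,T^\infty)])$ established in the preceding proof of well-definedness of $\psi$, I aim to prove
\[
\bar{\alpha}([[T_G]]) = \beta([f]),
\]
whose right-hand side is manifestly a function of $[f]$.

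The key observation is that $G(0,z,w,x)=g(x)$ is constant in $(z,w)$, so $T_G(0,z,x)=g(x)\cdot \id$ is invertible; combined with invertibility at $s=\pm 2$, this gives invertibility of $T_G$ on $\{-2,0,2\}\times\T\times X$. The long exact sequence of the triple then splits $[T_G]$ as a sum $j^-_*[T_G|_{[-2,0]}] + j^+_*[T_G|_{[0,2]}]$ of relative classes on the two halves. Each half is identified with a Toeplitz index coming from one ``hemisphere'' of $\tilde{\SSS}^3$: on $[-2,-1]\times\T^2$ the assignment $(s,z,w)\mapsto(z,(2+s)w)$ is a quotient map onto $\T\times\D^2$ collapsing $\{-2\}\times\T$ in the $w$-direction to $\T\times\{0\}$, and under this collapse the symbol of $T_G$ is the pullback of $f|_{\T\times\D^2\times X}$; on $[-1,0]\times\T^2$ the symbol $g_{1+s}$ is a null-homotopy from $f_r$ to $\pi^*g$. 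Applying $\partial^{\text{T}}$, which annihilates constant-in-$w$ endpoint data, yields an element of $K^{-i+1}(X)$ depending only on $[f|_{\T\times\D^2\times X}]$. By the orientation reversal $R(z,w)=(w,\bar z)$ built into $G$ on $[0,2]$, an analogous analysis identifies $[T_G|_{[0,2]}]$ with a class depending only on $[f|_{\D^2\times\T\times X}]$.

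The two half-contributions are then assembled via Mayer--Vietoris for $\tilde{\SSS}^3=(\T\times\D^2)\cup_{\T^2}(\D^2\times\T)$: the overlap $\T^2\times X$ contributes only pulled-back classes from $X$, which lie in $\Ker(\partial^{\text{T}})$, so the total equals $\beta([f])$. This proves $\bar{\alpha}([[T_G]])=\beta([f])$ and hence well-definedness. Additivity follows by representing $[f_1]+[f_2]$ as $f_1\oplus f_2$ and taking $g := g_1\oplus g_2$, $g_t:=g_{1,t}\oplus g_{2,t}$: then $G=G_1\oplus G_2$ and $T_G = T_{G_1}\oplus T_{G_2}$, yielding $\phi([f_1]+[f_2])=\phi([f_1])+\phi([f_2])$.

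The main obstacle is the second paragraph's identification: rigorously matching the relative $K$-class of the Toeplitz family on each half-interval with the restriction of $[f]$ to the corresponding half of $\tilde{\SSS}^3$. This requires tracking the radius-deformation in the $w$-direction at $s=-2,0$, the twist by $R$ on $[0,2]$, and the vanishing of $\partial^{\text{T}}$ on classes pulled back from $X$.
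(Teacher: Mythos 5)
There is a genuine gap at the heart of your second paragraph. You assert that, after applying $\partial^{\text{T}}$, the contribution of the half $[-2,0]$ depends only on $[f\lvert_{\T\times\D^2\times X}]$. It does not: it also depends on the choice of the null-homotopy $\{g_t\}$ used on $[-1,0]$. Two homotopies $g_t$, $h_t$ from $f_r$ to $\pi^*g$ differ by a loop, i.e.\ a map $\T\times\T^2\times X\to GL^{(i)}_N(\C)$ based at $\pi^*g$, whose class in $K^{-i}(\T^3\times X)$ can have a nontrivial component in the top summand $K^{-i-3}(X)$; that component survives $\partial^{\text{T}}$ and lands exactly in the summand $K^{-i-1}(X)$ that the quotient by $\Ker(\alpha)$ retains. (Concretely, for $X=\{\pt\}$ and $i=1$, a loop of maps $\T^2\to GL_N(\C)$ based at a constant represents an arbitrary element of $\pi_3(GL_N(\C))\cong\Z$ and changes the index of the Toeplitz family over that half-interval.) Your Mayer--Vietoris step does not repair this, because the ambiguity is not a class on the overlap $\T^2\times X$ pulled back from $X$: it is a genuinely three-dimensional class. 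Consequently $\bar{\alpha}([[T_G]])=\beta([f])$ cannot be established one hemisphere at a time.

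The paper's proof confronts exactly this issue and resolves it by a cancellation between the two halves rather than by controlling each half separately: the same homotopy $g_t$ appears on both sides of $G$ (once composed with the orientation-preserving $R(z,w)=(w,\bar z)$ and with the interval direction reversed); one glues $G$ to the reversed family $H'$ to obtain a class on $\T^3\times X$, reduces it to the map $\mathfrak{a}$, and uses the homeomorphism $q\times R\times\id_X$, which identifies the two pieces $\mathfrak{b}$ and $\mathfrak{d}$ while acting by $-1$ on the summand $K^{-i-3}(X)$, so their projections cancel. Any correct proof must exploit this pairing or an equivalent device; your outline omits it. Note also that your target identity $\bar{\alpha}([[T_G]])=\beta([f])$ is precisely the paper's subsequent lemma, whose proof there relies on well-definedness having been established first and then evaluates $\phi$ on specially chosen representatives, so it is not available as an input here. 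Your treatment of additivity via block sums is fine, and the remaining independence statements (of the representative $g$ of $[g]$ and of the representative $f$ of $[f]$), which the paper handles in steps (ii) and (iii) by explicit collapsing homotopies, are not addressed in your outline.
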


\begin{proof}
We show that $[[T_G]]$ depends only on the $K$-class $[f]$ in $K^{-i}(\tilde{\SSS}^3 \times X)$.
The proof is divided into three steps.

(i) We first fix $f$ and $g$ and show that $[[T_G]]$ is independent of the choice of homotopy $g_t$.
For this purpose, we take another homotopy $\{h_t\}_{0 \leq t \leq 1}$ from $h_0 = f_r$ to $h_1 = \pi^* g$ in $C(\T^2 \times X, GL_N^{(i)}(\C))$.
As in (\ref{Tfamily}), we define
$H \colon [-2,2] \times \T^2 \times X \to GL^{(i)}_N(\C)$
by using $h_t$ in place of $g_t$, and we consider a family of Toeplitz operators $T_H$ defined as in $T_G$.
In what follows, we show that
\begin{equation}\label{phiproof}
	[[T_G]] = [[T_H]] \in K^{-i+1}([-2,2] \times \T \times X, \{ \pm 2 \} \times \T \times X) \bigl/ \Ker(\alpha).
\end{equation}
Let $q \colon [-2,2] \to [-2,2]$ be the map given by $q(s) = -s$, and let $H' = (q \times \id_{\T^2 \times X})^*(H)$.
Note that $(q \times \id_{\T \times X})^*(T_H) = T_{H'}$ and the isomorphism
$(q \times \id_{\T \times X})^*$ on $K^{-i+1}(([-2,2],\{ \pm 2 \}) \times \T \times X)$
is a multiplication by $-1$.
Therefore, 
	$[T_{H'}] = -[T_H]$ in $K^{-i+1}(([-2,2],\{ \pm 2 \}) \times \T \times X)$.
Let us consider the following map,
\begin{gather*}
	K^{-i+1}(([-2,2], \{ \pm 2 \})  \times \T\times X) \times K^{-i+1}(([-2,2], \{ \pm 2 \})  \times \T\times X)\\
	\hspace{5cm} \overset{\mathcal{G}}{\longrightarrow} K^{-i+1}(\T^2 \times X),
\end{gather*}
which is constructed as follows:
\begin{itemize}
\item For spaces, two endpoints of two intervals $[-2,2]$ are connected (connect $\pm 2$ with $\mp 2$) to construct the circle $\T$.
\item For two families of Fredholm operators that are invertible on $\{\pm 2\} \times \T \times X$, we connect invertible operators on two endpoints correspondingly by a continuous path of (self-adjoint) invertible families. This is possible and unique up to homotopy by Kuiper's theorem.
\item We then obtain a family of Fredholm operators on $\T^2 \times X$, which defines an element of $K^{-i+1}(\T^2 \times X)$.
\end{itemize}
Note that we have the following isomorphism:
\begin{align}
\label{Kgroup}
\begin{aligned}
	K^{-i+1}(\T^2\times X)
		&\cong K^{-i+1}(\T \times X) \oplus K^{-i+1}([-2,2] \times \T\times X, \{ \pm 2 \} \times \T \times X)\\
		&\cong K^{-i+1}(X) \oplus K^{-i}(X)^{\oplus 2} \oplus K^{-i-1}(X).
\end{aligned}
\end{align}
Let $\iota \colon K^{-i+1}([-2,2] \times \T\times X, \{ \pm 2 \} \times \T \times X) \hookrightarrow K^{-i+1}(\T^2\times X)$ be the inclusion corresponding to the direct sum decomposition in (\ref{Kgroup}).
The map $\mathcal{G}$ is the same as the composite of the addition of the group $K^{-i+1}(([-2,2],\{ \pm 2 \}) \times \T \times X)$ followed by $\iota$.
Let us consider the projection $\varpi \colon K^{-i+1}(\T^2\times X) \to K^{-i-1}(X)$
corresponding to the decomposition in (\ref{Kgroup}).
Through the identification (\ref{identifyK}), $[[T_G]]$ and $\varpi \circ \iota([T_G])$ corresponds.
Therefore, in order to show (\ref{phiproof}), it is sufficient to show that
\begin{equation}\label{targeteqn}
	\varpi \circ \iota([T_G]) = \varpi \circ \iota([T_H]) \in K^{-i-1}(X).
\end{equation}

Let us discuss the element $\mathcal{G}([T_G], [T_{H'}])$.
The families of Toeplitz operators $T_G$ and $T_{H'}$ have symbols $G$ and $H'$, respectively, which are shown in Figure~\ref{Fig2}.
\begin{figure}
  \centering
  \includegraphics[width=12.5cm]{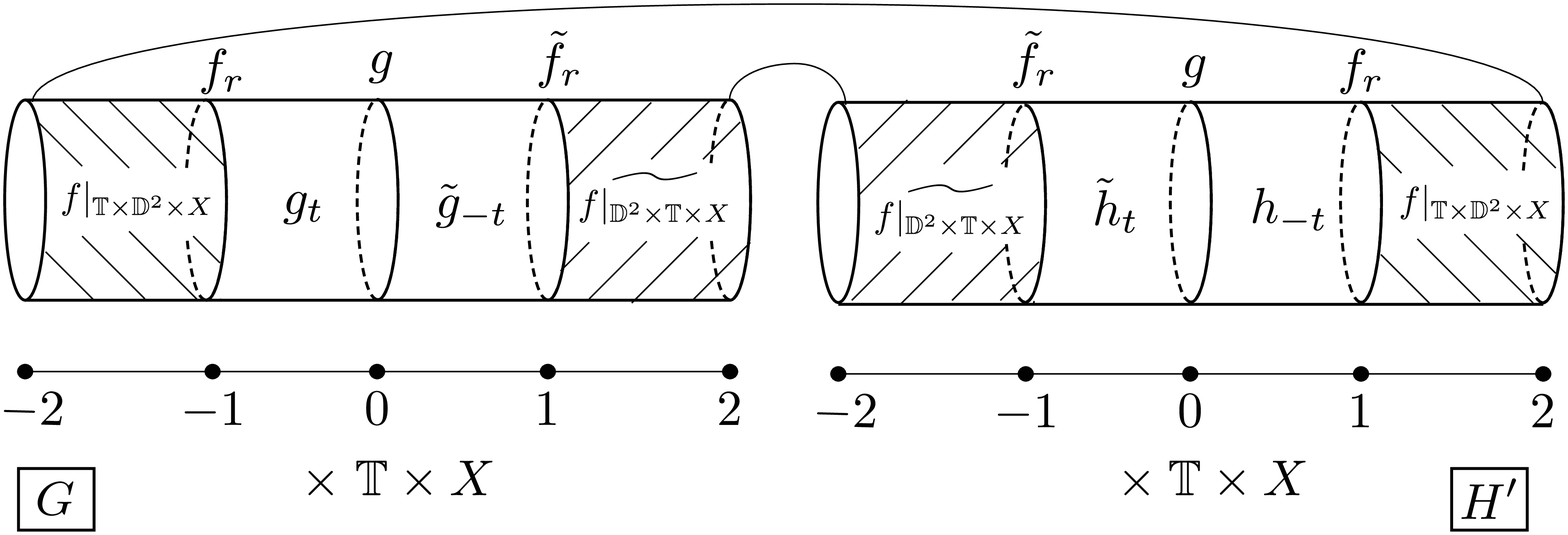}
  \caption{Schematic picture for matrix functions $G$ and $H'$. The signs of the subscripts of homotopies (e.g. $g_t$ or $h_{-t}$) indicate that the orientation of the interval $[-2,2]$ is consistent with that for each homotopy (e.g., $g_t$) or not (e.g., $h_{-t}$)}
    \label{Fig2}
\end{figure}
Since these two have the same matrix functions at the boundaries, we can glue $G$ and $H'$ as indicated in Figure~\ref{Fig2}, and obtain a continuous map $\T^3 \times X \to GL^{(i)}_N(\C)$ that defines an element of the group $K^{-i}(\T^3 \times X)$.
Let $\partial^{\text{T}} \colon K^{-i}(\T^3 \times X) \to K^{-i+1}(\T^2 \times X)$ be the push-forward map through Toeplitz operators
with respect to the last variable of $\T^3$.
Then, this $K$-group element maps to $\mathcal{G}([T_G], [T_{H'}])$ through $\partial^{\text{T}}$.
On the two pairs of dashed areas in Figure~\ref{Fig2}, there are the same matrix functions with opposite parameter directions; therefore these dashed areas can be cancelled by a homotopy, and this $K$-class is the same as the $K$-class of the continuous map $\mathfrak{a} \colon [-2,2]/\{\pm 2\} \times \T^2 \times X \to GL^{(i)}_N(\C)$ given by,
\begin{equation*}
\mathfrak{a}(s,z,w,x) =
\left\{
\begin{aligned}
g_{2+s}(z,w,x), &\hspace{3mm} -2 \leq s \leq -1,\\
\tilde{g}_{-s}(z,w,x) = g_{-s}(w,\bar{z},x), &\hspace{3mm} -1 \leq s \leq 0,\\
\tilde{h}_s(z,w,x) = h_s(w,\bar{z},x), & \hspace{3mm} 0 \leq s \leq 1,\\
h_{2-s}(z,w,x), & \hspace{3mm} 1 \leq s \leq 2,
\end{aligned}
\right.
\end{equation*}
as indicated in Figure~\ref{Fig3}.
\begin{figure}
  \centering
  \includegraphics[width=8cm]{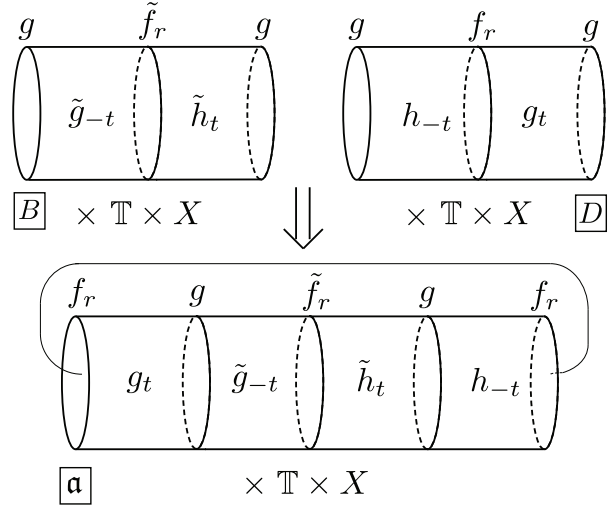}
  \caption{Schematic picture for $\mathfrak{a}$, $B$ and $D$}
    \label{Fig3}
\end{figure}
In summary, we have
\begin{equation*}
	\partial^{\text{T}}([\mathfrak{a}]) = [T_\mathfrak{a}] = \mathcal{G}([T_G],[T_{H'}]) = \iota([T_G]) + \iota([T_{H'}]) = \iota([T_G]) - \iota([T_H]).
\end{equation*}
Let $B, D \colon [-1,1] \times \T^2 \times X \to GL^{(i)}_N(\C)$ be given as $B = \mathfrak{a}\lvert_{-1 \leq s \leq 1}$ and
\begin{equation*}
D(s,z,w,x) =
\left\{
\begin{aligned}
h_{-s}(z,w,x), & \hspace{3mm} -1 \leq s \leq 0,\\
g_s(z,w,x) = g_{-s}(z,w,x), &\hspace{3mm} 0 \leq s \leq 1.
\end{aligned}
\right.
\end{equation*}
As shown in Figure~\ref{Fig3}, $B$ and $D$ correspond to two pieces obtained by cut $\mathfrak{a}$ along $s = \pm 1$.
Both  $B$ and $D$ have the same value at boundaries $s = \pm 1$ and reduce to the map on $[-1,1]/\{ \pm 1\} \times \T^2 \times X$, for which we write $\mathfrak{b}$ and $\mathfrak{d}$, respectively.
\begin{equation*}
	\mathfrak{b}, \mathfrak{d} \colon [-1,1]/\{ \pm 1\} \times \T \times \T \times X \to GL^{(i)}_N(\C).
\end{equation*}
For $B$, $D$, $\mathfrak{b}$ and $\mathfrak{d}$, we associate families of Toeplitz operators $T_B$, $T_D$, $T_\mathfrak{b}$ and $T_\mathfrak{d}$ by taking right $\T$ for the symbols.
These are families of (self-adjoint) Fredholm operators, and since $B(s = \pm 1)$ and $D(s = \pm 1)$ are given by (the pull-back of) $g$, both $T_B(s=\pm1)$ and $T_D(s=\pm1)$ are invertible.
Therefore, they define the following elements of $K$-groups:
\begin{equation*}
	[T_B], [T_D] \in K^{-i+1}([-1,1] \times \T \times X, \{ \pm 1 \} \times \T \times X),
\end{equation*}
\begin{equation*}
	[T_\mathfrak{b}], [T_\mathfrak{d}] \in K^{-i+1}([-1,1]/\{ \pm 1\} \times \T \times X).
\end{equation*}
Since $\mathfrak{a}$ is obtained by gluing $B$ and $D$, we have
\begin{equation*}
	[T_\mathfrak{a}] = \mathcal{G}([T_B], [T_D]) = \iota([T_B]) + \iota([T_D]) = [T_\mathfrak{b}] + [T_\mathfrak{d}].
\end{equation*}
Let us consider the homeomorphism $q \times R$ on $[-1,1]/\{ \pm 1 \} \times \T^2$ given by $(q \times R)(s, z, w) = (-s, w, \bar{z})$,
and consider the map $(q \times R \times \id_X)^*$ on the $K$-group $K^{-i}(\T^3 \times X)$.
Then $(q \times R \times \id_X)^*[\mathfrak{d}] = [\mathfrak{b}]$ holds.
Furthermore, corresponding to the direct sum decomposition
\begin{equation}\label{Kgroup2}
	K^{-i}(\T^3 \times X) \cong
		K^{-i}(X) \oplus K^{-i-1}(X)^{\oplus 3} \oplus K^{-i-2}(X)^{\oplus 3} \oplus K^{-i-3}(X),
\end{equation}
the map $(q \times R \times \id_X)^*$ acts on the direct summand $K^{-i-3}(X)$ as multiplication by $-1$.
By the map $\partial^{\text{T}}$, this direct summand $K^{-i-3}(X)$ in (\ref{Kgroup2}) maps isomorphically to the direct summand $K^{-i-1}(X)$ in (\ref{Kgroup}).
Therefore, we have $\varpi ([T_\mathfrak{b}]) = - \varpi ([T_\mathfrak{d}])$,
which leads to equation (\ref{targeteqn}); that is,
\begin{equation*}
	\varpi \circ \iota([T_G]) - \varpi \circ \iota([T_H]) 
	= \varpi \circ \mathcal{G}([T_G], [T_{H'}]) 
	= \varpi ([T_\mathfrak{a}])  = \varpi ([T_\mathfrak{b}]) + \varpi ([T_\mathfrak{d}]) = 0.
\end{equation*}

(ii) We next fix $f$ and show that $[[T_G]]$ is independent of the choice of a representative $g$ of the $K$-class $[g]$ in $K^{-i}(X)$ satisfying $[f_r] = \pi^*[g]$.
Note that the $K$-class $[g]$ is uniquely determined since $\pi^*$ is injective.
Let $g' \colon X \to GL^{(i)}_{N'}(\C)$ be another representative; that is, $[g]=[g']$.
By taking $N$ and $N'$ sufficiently large, we assume that $N = N'$ and that there exists a homotopy $\{k_t\}_{0 \leq t \leq 1}$ from $k_0 = g$ to $k_1 = g'$ in $C(X, GL^{(i)}_N(\C))$.
Combined with the homotopies $\{g_t\}_{0 \leq t \leq 1}$ (from $f_r$ to $\pi^* g$) and $\{\pi^* k_t\}_{0 \leq t \leq 1}$, we obtain a homotopy from $f_r$ to $\pi^* g'$ in $C(\T^2 \times X, GL^{(i)}_N(\C))$.
The associated family of Toeplitz operators is expressed as $T_K$, where
$K \colon [-2,2] \times \T^2 \times X \to GL^{(i)}_N(\C)$ is a continuous map given by
\begin{equation*}
K(s,z,w,x) =
\left\{
\begin{aligned}
f(z, (2+s)w,x), &\hspace{3mm} -2 \leq s \leq -1,\\
g_{2+2s}(z,w,x), &\hspace{3mm} -1 \leq s \leq -\frac{1}{2},\\
\pi^*k_{1+2s}(z,w,x) = k_{1+2s}(x), &\hspace{3mm} -\frac{1}{2} \leq s \leq 0,\\
\widetilde{\pi^* k}_{1-2s}(z,w,x) = k_{1-2s}(x), & \hspace{3mm} 0 \leq s \leq \frac{1}{2},\\
\tilde{g}_{2-2s}(z,w,x), & \hspace{3mm} \frac{1}{2} \leq s \leq 1,\\
\tilde{f}(z,(2-s)w,x), & \hspace{3mm} 1 \leq s \leq 2.
\end{aligned}
\right.
\end{equation*}
Within $-\frac{1}{2} \leq s \leq \frac{1}{2}$, this family $K$ has $k_t$ of the opposite parameter direction, and it can be collapsed by a homotopy to obtain $G$.
This homotopy preserves $s = \pm 2$ and provides a homotopy from $T_G$ to $T_K$ in the space of families of (self-adjoint) Fredholm operators that are invertible at $s = \pm 2$; therefore, $[T_G] = [T_K]$.

(iii) Finally, we show that $[[T_G]]$ is independent of the choice of a representative $f$ of the $K$-class $[f]$ in $K^{-i}(\tilde{\SSS}^3 \times X)$.
Let $f' \colon \tilde{\SSS}^3 \times X \to GL^{(i)}_{N'}(\C)$ be another representative; that is, $[f'] = [f]$.
As in (2), we assume $N = N'$ and that there exists a homotopy $\{f_t\}_{0 \leq t \leq 1}$ from $f_0 = f$ to $f_1 = f'$ in $C(\tilde{\SSS}^3 \times X, GL^{(i)}_N(\C))$.
In this case, $\{(f_t)_r\}_{0 \leq t \leq 1}$ provides a path from $f_r$ to $f'_r$ in $C(\T^2 \times X, GL^{(i)}_N(\C))$.
Combined with the homotopy $\{g_t\}_{0 \leq t \leq 1}$, we obtain a homotopy from $f_r$ to $\pi^* g$, which provides the family of Toeplitz operators $T_{G'}$ and a homotopy from $T_G$ to $T_{G'}$.
Explicitly, for $0 \leq u \leq 1$, let $G_u \colon [-2,2] \times \T^2 \times X \to GL^{(i)}_N(\C)$
be a continuous map defined as follows:
\begin{equation*}
G_u(s,z,w,x) =
\left\{
\begin{aligned}
f_u(z, (2+s)w,x), &\hspace{3mm} -2 \leq s \leq -1,\\
f_{-2s+u-2}(z,w,x), &\hspace{3mm} -1 \leq s \leq -1 + \frac{u}{2},\\
g_{\frac{2}{2-u}s + 1}(z,w,x), &\hspace{3mm} -1+ \frac{u}{2} \leq s \leq 0,\\
\tilde{g}_{\frac{-2}{2-u}s + 1}(z,w,x), & \hspace{3mm} 0 \leq s \leq 1-\frac{u}{2},\\
\tilde{f}_{2s+u-2}(z,w,x), & \hspace{3mm} 1-\frac{u}{2} \leq s \leq 1,\\
\tilde{f}(z,(2-s)w,x), & \hspace{3mm} 1 \leq s \leq 2.
\end{aligned}
\right.
\end{equation*}
Then, the associated family of Toeplitz operators $\{T_{G_u}\}_{u \in [0,1]}$ provides a path of families of (self-adjoint) Fredholm operators invertible at $s = \pm 2$ satisfying $T_{G_0} = T_G$ and $T_{G_1} = T_{G'}$; therefore, $[T_G] = [T_{G'}]$.
\end{proof}
The following lemma states that the diagram (\ref{maindiagfamily}) commutes:
\begin{lemma}
	$\bar{\alpha} \circ \phi = \beta$.
\end{lemma}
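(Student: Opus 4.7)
My plan is to compute both $\bar\alpha(\phi([f^E]))$ and $\beta([f^E])$ concretely for an arbitrary class $[f^E] \in K^{-i}(\tilde{\SSS}^3 \times X)$ and verify that they coincide, so that the equality of group homomorphisms $\bar\alpha \circ \phi = \beta$ follows.

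I would start by extracting the boundary data of the family $T_G$ that appears in the definition $\phi([f^E]) = [[T_G]]$. From the explicit formula (\ref{Tfamily}), at $s = -2$ the symbol $G$ is constant in $w$ and equals $g^-(z,x) := f^E(z,0,x)$, while at $s = +2$ it is constant in $w$ and equals $g^+(z,x) := f^E(0,\bar z,x)$. Consequently, $T_G|_{\{-2\}\times\T\times X}$ and $T_G|_{\{+2\}\times\T\times X}$ are families of invertible multiplication operators whose classes in $K^{-i}(\T\times X)$ are $[g^-]$ and $[g^+]$ respectively, and $T_G$ itself provides a Fredholm interpolation across $[-2,2]\times\T\times X$ between them. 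By the defining property of the connecting map $\partial^{\mathrm{pair}}$ of the long exact sequence of the pair, this identifies
\begin{equation*}
    [T_G] = \partial^{\mathrm{pair}}([g^-], [g^+]).
\end{equation*}
Combined with the commutativity of diagram (\ref{diagpsi}) and the defining relation $\alpha \circ \partial^{\mathrm{pair}} = \partial^T \oplus -\partial^T$, this yields
\begin{equation*}
    \bar\alpha(\phi([f^E])) = \partial^T([g^-]) - \partial^T([g^+]).
\end{equation*}

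Second, I would show that $\beta([f^E])$ equals precisely the same expression. Here I would invoke Mayer--Vietoris for the cover $\tilde{\SSS}^3 = (\T\times\D^2) \cup_{\T^2} (\D^2\times\T)$ together with the deformation retracts of the two solid tori onto their core circles $\T\times\{0\}$ and $\{0\}\times\T$. In this picture, the restrictions of $[f^E]$ to the two core circles are $[g^-]$ and the pullback of $[g^+]$ by the orientation-reversing map $\bar z \mapsto z$ (the latter reflecting how $\tilde{\SSS}^3$ is assembled using the rotation $R$ in the gluing). Tracing the top-cell projection $K^{-i}(\tilde{\SSS}^3\times X) \to K^{-i-3}(X)$ followed by Bott periodicity $K^{-i-3}(X) \cong K^{-i+1}(X)$ through the Mayer--Vietoris sequence, and using the standard identification of Toeplitz push-forward $\partial^T$ with Bott periodicity on the reduced summand $K^{-i-1}(X) \subset K^{-i}(\T\times X)$, one obtains $\beta([f^E]) = \partial^T([g^-]) - \partial^T([g^+])$, matching the first computation.

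The main obstacle will be the second step, namely the explicit identification of $\beta$ with the Toeplitz-index difference. Although natural, this requires carefully tracking signs and orientations through the Mayer--Vietoris sequence for the piecewise-smooth sphere $\tilde{\SSS}^3$, in particular accounting for the rotation $R(z,w)=(w,\bar z)$ (which is what produces the $\bar z$ in $g^+$). A convenient reduction is first to verify the identity on the summand $K^{-i}(X) \subset K^{-i}(\tilde{\SSS}^3\times X)$ of classes pulled back from $X$, where both sides vanish (for $\phi$, because such a class is represented so that the homotopy $g_t$ is constant and hence $T_G$ is invertible throughout), and then to check agreement on explicit generators of the $K^{-i-3}(X)$ summand, where the computation reduces to the known compatibility between the Toeplitz index and Bott periodicity in a single $\T$-factor.
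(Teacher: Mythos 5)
Your first step contains a genuine error that sinks the whole argument. The identity $[T_G]=\partial^{\mathrm{pair}}([g^-],[g^+])$ is false: the relative $K$-class of a family of Fredholm operators that is invertible on $\{\pm2\}\times\T\times X$ is \emph{not} determined by the symbol classes of its boundary restrictions — the interior of the family contributes. The place in the paper where a relative Fredholm class is identified with $\partial^{\mathrm{pair}}$ of boundary data (namely $\partial^{\mathrm{pair}}([t^0],[\tilde t^\infty])=[F\lvert_{[-1,1]\times\T\times X}]$ in the proof that $\psi$ is well defined) relies crucially on the interior family being a finite matrix lift $\tilde F\oplus g$ of the boundary data; $T_G$ is nothing of the sort — on $[-2,-1]\sqcup[1,2]$ it consists of invertible Toeplitz operators with genuinely non-constant symbols, and on $[-1,1]$ of infinite-dimensional Fredholm Toeplitz operators. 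Example~\ref{example} is an explicit counterexample to your formula: there $g^-(z)=f^E(z,0)=\diag(2z,z^{-1})$ and $g^+(z)=f^E(0,\bar z)$ both have determinant equal to the constant $2$, so $\partial^{\mathrm{T}}([g^-])-\partial^{\mathrm{T}}([g^+])=0$, whereas $\beta([f^E])=1$. For the same reason your second step fails: $\beta$ is the projection onto the top-cell summand $K^{-i-3}(X)$ of $K^{-i}(\tilde{\SSS}^3\times X)$ followed by Bott periodicity, and restricting $f^E$ to the core circles $\T\times\{0\}$ and $\{0\}\times\T$ of the two solid tori only records information in the lower summands $K^{-i}(\T\times X)$; it cannot detect the top cell at all.

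The correct route (and the paper's) avoids boundary restrictions entirely. One decomposes $K^{-i}(\tilde{\SSS}^3\times X)\cong K^{-i}(X)\oplus K^{-i}_{\cpt}(\R^3\times X)$ as in (\ref{Kdecomp}). On the first summand, represented by $\tpi^*g$, the homotopy $g_t$ may be taken constant, so $T_G$ is invertible throughout and $\phi$ vanishes, as does $\beta$ (your reduction here is the one part that matches the paper). On the second summand one represents the class by a map equal to $I^{(i)}_N$ outside a small ball in $\tilde{\SSS}^3\setminus\{s_0\}$, so that $G$ is trivial except on $-2\le s\le -1$; then $\phi$ is literally the Toeplitz push-forward map $K^{-i}_{\cpt}(\R\times\T^2\times X)\to K^{-i+1}_{\cpt}(\R\times\T\times X)$, which Atiyah's theorem identifies with Bott periodicity. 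Composing with $\bar\alpha$ gives the square of Bott periodicity on the top summand, i.e.\ $\beta$. If you want to salvage your plan, you must replace the boundary-data computation by this localization to a ball; there is no version of your step one in which only $g^\pm$ appear.
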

\begin{proof}
The map $\beta$ is given through the decomposition (\ref{Kdecomp}) and the square of the Bott periodicity isomorphism.
The map $\bar{\alpha}$ corresponds to the Bott periodicity isomorphism, and we see a relation between the map $\phi$ and the Bott periodicity.

Let $\tpi \colon \tilde{\SSS}^3 \times X \to X$ be the projection.
Through the decomposition (\ref{Kdecomp}), the direct summand $K^{-i}(X)$ in $K^{-i}(\tilde{\SSS}^3 \times X)$ is represented by $[\tpi^* g]$, where $g \colon X \to GL^{(i)}_N(\C)$ for some $N$.
$\tpi^* g$ is a continuous map on $\tilde{\SSS}^3 \times X$, which is constant with respect to $\tilde{\SSS}^3$.
In this case, a homotopy $\{ g_t \}_{0 \leq t \leq 1}$ to construct $G$ in (\ref{Tfamily}) can be taken to be constant $g_t = \pi^* g$, and
the corresponding family $T_G$ of Toeplitz operators representing $\phi([\tpi^* g])$ to be a family of (self-adjoint) invertible operators.
Therefore, $\phi([\tpi^* g]) = 0$, and the direct summand $K^{-i}(X)$ maps to zero through $\phi$.

We next consider the direct summand of $K^{-i}(\tilde{\SSS}^3 \times X)$ corresponding to $K^{-i}_\cpt(\R^3 \times X)$ in (\ref{Kdecomp}).
We consider $\tilde{\SSS}^3$ as the one-point compactification of $\R^3$, where the point at infinity corresponds to the base point $s_0 = (1,1) \in \tilde{\SSS}^3$.
$K$-classes in this component are represented by a continuous maps $f \colon \tilde{\SSS}^3 \times X \to GL^{(i)}_N(\C)$ satisfying $f(s_0, x) = I^{(i)}_N$ for any $x \in X$.
Since $\tilde{\SSS}^3 \setminus \{ s_0 \} \cong \R^3$ is contractible, we can take a small three-dimensional ball $\mathbb{B}$ in $\tilde{\SSS}^3$ and deform $f$ to a continuous map which is $I^{(i)}_N$ on $(\tilde{\SSS}^3 \setminus \mathbb{B}) \times X$.
Therefore, the $K$-class $[f]$ is represented by a continuous map $f' \colon \tilde{\SSS}^3 \times X \to GL^{(i)}_N(\C)$, which is $I^{(i)}_N$ on $\D^2 \times \T$ and $\T \times \{ 0 \}$, where $0$ is the origin of $\D^2 \subset \C$.
Since $f'_r = I^{(i)}_N$, we can take a homotopy $\{ g_t \}_{0 \leq t \leq 1}$ to construct $G$ in (\ref{Tfamily}) as $g_t = I^{(i)}_N$.
Then, the map $G \colon [-2,2] \times \T^2 \times X \to GL^{(i)}_N(\C)$ is $I^{(i)}_N$ when $s=-2$ and $-1 \leq s \leq 2$, and corresponding family of Toeplitz operators $T_G$ is also $I^{(i)}_N$ on $s=-2$ and $-1 \leq s \leq 2$.
Therefore, $\phi$ on $K^{-i}_\cpt(\R^3 \times X)$ is simply the push-forward map through the Toeplitz operators \cite{At68}.
Explicitly, it is the same as the composite of the following maps:
\begin{gather*}
	K^{-i-3}(X) \cong K^{-i}_\cpt(\R^3 \times X) \to K^{-i}_\cpt(\R \times \T^2 \times X)
		\overset{\partial^{\text{T}}}{\longrightarrow} K^{-i+1}_\cpt(\R \times \T \times X)\\
		 \overset{/\Ker(\alpha)}{\longrightarrow} K^{-i+1}([-2,2] \times \T \times X,\{ \pm 2\} \times \T \times X) / \Ker(\alpha) \cong K^{-i-1}(X).
\end{gather*}
In the above, the map from $K^{-i}_\cpt(\R^3 \times X)$ to $K^{-i}_\cpt(\R \times \T^2 \times X)$ is constructed as follows: we embed $\R^3$ into $\R \times \T^2$ as a small open ball.
A $K$-class in $K^{-i}_\cpt(\R^3 \times X)$ is represented by a continuous map $g$ from $\R^3 \times X$ to $GL^{(i)}_N(\C)$ for some $N$, which is $I^{(i)}_N$ outside of a compact set.
The image of the $K$-class $[g]$ in $K^{-i}_\cpt(\R \times \T^2 \times X)$ is represented by the extension of $g$ to $\R \times \T^2 \times X$ by $I^{(i)}_N$.
Therefore, $\phi$ on $K^{-i}_\cpt(\R^3 \times X)$ is given by the Bott periodicity isomorphism \cite{At68}.

In summary, the composite $\bar{\alpha} \circ \phi$ is a projection onto $K^{-i}_\cpt(\R^3 \times X)$ followed by the square of the Bott periodicity isomorphism, which is $\beta$.
\end{proof}

\subsection{Proof of Theorem~\ref{mainfamily}}
\label{Sect.4.3}
Theorem~\ref{mainfamily} follows from the proposition below and the commutativity of the diagram (\ref{maindiagfamily}).
\begin{proposition}\label{mainpsingle}
Let $f \colon \T^2 \times X \to GL^{(i)}_N(\C)$ be a continuous map such that for each $x \in X$, $f(x)$ is a two-variable rational matrix function such that the associated quarter-plane Toeplitz operator $T^{0,\infty}_{f(x)}$ is Fredholm.
Let $f^E \colon \tilde{\SSS}^3 \times X \to GL^{(i)}_N(\C)$ be the extension of $f$ through matrix factorization in the hermitian case of Proposition~\ref{exthf} (when $i=0$) or Proposition~\ref{ext3} (when $i=1$).
We then have $\psi([(T^0_f,T^\infty_f)]) = \phi([f^E])$.
\end{proposition}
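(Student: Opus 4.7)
The plan is to construct representatives of $\psi([(T^0_f, T^\infty_f)])$ and $\phi([f^E])$ from compatible data and then compare them in the quotient $K^{-i+1}([-2,2] \times \T \times X, \{\pm 2\} \times \T \times X)/\Ker(\alpha)$. By the independence of both $\psi$ and $\phi$ from the choice of the representative $g$ and the homotopy $\{g_t\}$ from $f$ to $\pi^* g$, I may use the same $g$ and the same $g_t$ on both sides, chosen to exploit the matrix factorization of $f$.

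The crucial input is that the outer slabs $s \in [-2,-1]$ and $s \in [1,2]$ of $T_G$ in the $\phi$ construction consist of Toeplitz operators with symbols $f^E(z,(2+s)w,x)$ and $f((2-s)w,\bar z,x)$ coming from the matrix-factorization extension; by Lemma~\ref{extension3} applied in families via Lemma~\ref{extension2}, these are families of invertible Toeplitz operators. Since the outer slabs of $F$ are also families of invertible half-plane Toeplitz operators by Lemma~\ref{lemma2}, I choose the lifts $T^0_t$ and $T^\infty_t$ so that on an initial sub-interval $t \in [0,t_0]$ the lift $T^0_t$ equals the Toeplitz operator with symbol $f^E(z,(1-t/t_0)w,x)$, and similarly for $T^\infty_t$ in the other direction; the homotopy lifting property of the fibration~(\ref{fib2}) then extends these lifts on $[t_0,1]$ to reach the direct-sum form $t^0 \oplus g$, $t^\infty \oplus g$ required by Lemma~\ref{lemma2}(iii).

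With this matched data, $F$ and $T_G$ agree, up to reparametrization of the $s$-interval, on the portions of the outer slabs corresponding to the matrix-factorization deformation, both being the same family of invertible Toeplitz operators with symbol given by rescaling the $w$-disk via $f^E$. The classes $[F]$ and $[T_G]$ in the relative $K$-group thus differ only through (a) the auxiliary deformation of the $\psi$-lift on $[t_0,1]$ needed to reach the direct-sum form, and (b) the middle slab $s \in [-1,1]$, where $F$ uses a Kuiper path $a_{s+1} \oplus g$, $b_{1-s} \oplus g$ in the matrix algebra while $T_G$ uses the Toeplitz family $T_{g_{1+s}}, T_{\tilde g_{1-s}}$. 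I claim that the total discrepancy $[F] - [T_G]$ lies in the direct summand $K^{-i}(X) = \Ker(\alpha)$ in the decomposition underlying~(\ref{identifyK}), so $[F]$ and $[T_G]$ coincide in the quotient, which proves the proposition.

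The main obstacle is verifying this last claim: tracking explicitly that the bridging deformation at the end of the $\psi$-lift, together with the Kuiper-type stabilization on the middle slab, contributes only to the base summand $K^{-i}(X)$. This reduces to showing that the families of invertible matrix functions on $\T \times X$ obtained by comparing the direct-sum form $t^0 \oplus g$ with the constant-symbol Toeplitz operator $T_{\pi^* g} = g \cdot I$ represent classes pulled back from $K^{-i}(X)$, which follows from an explicit description of the decomposition behind~(\ref{identifyK}) and the fact that constant-in-$z$ families sit in the base summand, together with the Bott-periodicity identification realizing $\bar\alpha$.
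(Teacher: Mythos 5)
Your opening moves match the paper's: you use the same $g$ and the same homotopy $\{g_t\}$ for both constructions, and you correctly identify Lemma~\ref{extension3} as the reason the outer slabs of $T_G$ are invertible. But the proposal then departs from what actually works, and the departure introduces two genuine problems. First, your choice of lifts is not admissible: if on $[0,t_0]$ you set $T^0_t$ to be the Toeplitz family with symbol $f^E(z,(1-t/t_0)w,x)$ and $T^\infty_t$ to be the analogous family in the other direction, then $(\sigmaz\bm{\otimes}1)(T^0_t)$ is the function $(z,w)\mapsto f^E(z,(1-t/t_0)w,x)$ while $(\sigmai\bm{\otimes}1)(T^\infty_t)$ is $(z,w)\mapsto f^E((1-t/t_0)z,w,x)$; these disagree on $\T^2$ for $0<t\le t_0$, so the pair $(T^0_t,T^\infty_t)$ does not lie in $GL^{(i)}(\Szi\bm{\otimes}C(X))$ and condition (ii) of Lemma~\ref{lemma2} fails. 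The map $\psi$ is only defined on such compatible pairs, so this ``matched data'' cannot be fed into the $\psi$ construction. Second, even granting the matching, your key claim --- that the residual discrepancy $[F]-[T_G]$ lies in $\Ker(\alpha)$ --- is exactly what needs proof, and the reduction you sketch is false as stated: the class $[t^0]\in K^{-i}(\T\times X)$ obtained by comparing $t^0\oplus g$ with the constant family $g$ is generally \emph{not} pulled back from $K^{-i}(X)$; its component in the $K^{-i-1}(X)$ summand is precisely what $\partial^{\mathrm T}$ sends to the (generally nonzero) index class. So the argument is circular at the decisive step.

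The idea you are missing is the one that makes the paper's proof short. Take the standard $F$ from Lemma~\ref{lemma2} and build $T_G$ from the induced path $g_t=\sigmaz(T^0_t)=\sigmai(T^\infty_t)$. Collapse the middle of $T_G$ by pulling back along $p\colon[-2,2]\to[-1,1]$ (legitimate because $T_G$ is invertible on the outer slabs, by Lemma~\ref{extension3}); call the result $T'_G$. Then $F$ and $T'_G$ agree at $s=\pm2$ and have \emph{identical symbols} at every point of $[-2,2]\times\T\times X$: on the outer slabs both have symbol $g_{s+2}$ (resp.\ $\tilde g_{2-s}$), and on $[-1,1]$ the operator $a_{s+1}\oplus g$ (resp.\ $b_{1-s}\oplus g$) is a finite-rank perturbation of the constant-symbol operator $T_{\pi^*g}$. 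Hence $F-T'_G$ is a continuous family of (self-adjoint) compact operators, and the linear path $(1-t)F+tT'_G$ stays within Fredholm operators that are invertible at $s=\pm2$. This gives $[F]=[T'_G]=[T_G]$ already in the relative group $K^{-i+1}([-2,2]\times\T\times X,\{\pm2\}\times\T\times X)$, so there is no discrepancy to localize in $\Ker(\alpha)$ at all.
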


\begin{proof}
For the definition of $\psi([(T^0_f,T^\infty_f)])$, we take a path $\{ (T^0_t, T^\infty_t) \}_{0\leq t \leq 1}$ in $GL^{(i)}_{N'}(\Szi \bm{\otimes} C(X))$ in Lemma~\ref{lemma2} and construct a family $F$ of (self-adjoint) Fredholm operators on $[-2,2] \times \T^2 \times X$ representing the element $\psi([(T^0_f,T^\infty_f)])$ ((b) in Figure~\ref{Fig4}).
Note that $T^0_f$ is identified with $T_f$ through the isomorphism $\TT^0 \cong C(\T) \bm{\otimes} \TT$ and that $\tilde{T}^\infty_f$ is identified with $T_{\tilde{f}}$.
Associated with the family $\{ (T^0_t, T^\infty_t) \}_{0\leq t \leq 1}$, there is a path\footnote{We assume that $f$ is sufficiently stabilized to take $N' = N$ for simplicity.} $g_t =  \sigmaz(T^0_t) = \sigmai(T^\infty_t)$ from $f$ to $\pi^* g$ in $C(\T^2 \times X, GL^{(i)}_{N}(\C))$.
We use this path $\{ g_t \}_{0 \leq t \leq 1}$ to define $\phi([f^E])$.
Families of (self-adjoint) Fredholm Toeplitz operators $T_G$ representing the $K$-group element $\phi([f^E])$ and their symbols $G$ are indicated in (d) and (c) of Figure~\ref{Fig4}.
\begin{figure}
  \centering
  \includegraphics[width=12.5cm]{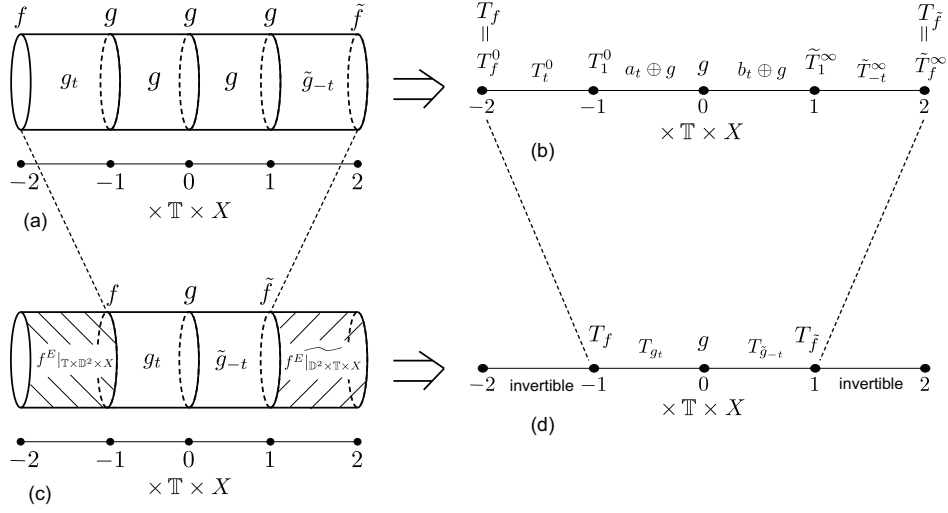}
  \caption{Families of Toeplitz operators and their symbols used to define $\psi([(T^0_f,T^\infty_f)])$ ((b) and (a)) and $\phi([f^E])$ ((d) and (c)): (a) symbols of $F$, (b) family of Toeplitz operators $F$, (c) symbols $G$ of $T_G$, (d) family of Toeplitz operators $T_G$}
    \label{Fig4}
\end{figure}
Note that $f^E_r = f^E\lvert_{\T^2 \times X} = f$ in our setup.
Since $f^E$ in Propositions~\ref{ext3} and \ref{exthf} is the extension of $f$ constructed through matrix factorization, the families of Toeplitz operators $T_G$ are (self-adjoint) invertible on $([-2,-1] \sqcup [1,2]) \times \T \times X$ by Lemma~\ref{extension3}.

Let us compare $F$ and $T_G$.
Let $p \colon [-2,2] \to [-1,1]$ be a map defined as follows:
\begin{equation*}
p(s) =
\left\{
\begin{aligned}
s+1, &\hspace{3mm} -2 \leq s \leq -1,\\
0, &\hspace{3mm} -1 \leq s \leq 1,\\
s-1, & \hspace{3mm} 1 \leq s \leq 2,
\end{aligned}
\right.
\end{equation*}
that is, collapsing the subinterval $[-1,1]$ in $[-2,2]$ to zero.
We take a restriction of $T_G$ onto $-1 \leq s \leq 1$ and extend $\{s=0\}$ to $-1 \leq s \leq 1$; that is, we consider
\begin{equation*}
	T'_G = (p \times \id_{\T \times X})^* (T_G\lvert_{[-1,1] \times \T \times X}).
\end{equation*}
Since $T_G$ is invertible when $-2 \leq s \leq -1$ and $1 \leq s \leq 2$, $T'_G$ is also invertible when $s = \pm 2$, and the $K$-class of $T'_G$ in $K^{-i+1}([-2,2] \times \T \times X, \{ \pm 2 \}  \times \T \times X)$ is the same as that of $T_G$.
Families $F$ and $T'_G$ coincide at boundaries $s = \pm 2$, and on the interior, these two are families of (self-adjoint) Fredholm Toeplitz operators of the same symbol.
Therefore, they differ by (self-adjoint) compact operators (see Figure~\ref{Fig4}), and the linear path $(1-t)F + t T'_G$ for $0 \leq t \leq 1$ provides a homotopy between $F$ and $T'_G$ in the space of (self-adjoint) Fredholm operators on $[-2,2] \times \T \times X$, which is invertible on $\{ \pm 2 \} \times \T \times X$.
We thus have,
\begin{equation*}
	[F] = [T'_G] = [T_G] \in K^{-i+1}([-2,2] \times \T \times X, \{ \pm 2 \}  \times \T \times X).
\end{equation*}
By taking a quotient by $\Ker(\alpha)$, we obtain $\psi([(T^0_f,T^\infty_f)]) = \phi([f^E])$.
\end{proof}
The following is an example of Theorem~\ref{mainfamily} when $i=1$ and $X = \{ \pt \}$.

\begin{example}\label{example}
Let $f \colon \T^2 \to GL(2,\C)$ be the two-variable rational matrix function given by
$f(z,w) =
	\begin{pmatrix}
z & -w^{-1}\\
w & z^{-1}
\end{pmatrix}$.
We first consider $f$ as a family of rational matrix functions on the circle $\T$ of variable $z$ parametrized by $w \in \T$.
By applying an algorithm from \cite{GKS03}, we obtain a right factorization of the following form:
\begin{equation*}
		f(z,w) = f_-(z,w) \cdot f_+(z,w) =
\begin{pmatrix}
1 & 0\\
-wz^{-1} & 1
\end{pmatrix}
\begin{pmatrix}
z & -w^{-1}\\
2w & 0
\end{pmatrix}.
\end{equation*}
Since this is a canonical factorization, the half-plane Toeplitz operator $T^\infty_f$ is invertible.
Our extended symbol $f^E$ in Proposition~\ref{ext3} is given as,
\begin{equation}\label{exeq1}
		f^E(z,w) = f_-(\bar{z}^{-1},w) \cdot f_+(z,w) =
\begin{pmatrix}
z & -w^{-1}\\
-w\lvert z\lvert^2 + 2w & \bar{z}
\end{pmatrix},
\end{equation}
for $(z,w) \in \D^2 \times \T$.
We next consider $z \in \T$ as a parameter, and by a similar discussion for rational matrix functions $f$ with respect to $w \in \T$, we also obtain a canonical factorization. Therefore, the half-plane Toeplitz operator $T^0_f$ is invertible and $f^E$ for $(z,w) \in \T \times \D^2$ is given as
\begin{equation}\label{exeq2}
		f^E(z,w) = 
\begin{pmatrix}
-z\lvert w \lvert^2 + 2z & -\bar{w}\\
w & z^{-1}
\end{pmatrix}.
\end{equation}
For $0 \leq t \leq 1$, let $g_t \colon \tilde{\SSS}^3 \to GL(2,\C)$ be a continuous map given as follows:
On $\D^2 \times \T$, we set $g_t$ as the replacement of the $(2,1)$ component of $f^E$ in (\ref{exeq1}) with $tw(-\lvert z \lvert^2+1) + w$.
On $\T \times \D^2$ we set $g_t$ as the replacement of the $(1,1)$ component of $f^E$ in (\ref{exeq2}) with $tz(-\lvert w \lvert^2+1) + z$.
$g_t$ provides a homotopy between $g_1 = f^E$ and
$g_0(z,w) = 
\begin{pmatrix}
z & -\bar{w}\\
w & \bar{z}
\end{pmatrix}$
in $C(\tilde{\SSS}^3, GL(2,\C))$.
Therefore, $[f^E] = [g_0]$ in $K^{-1}(\tilde{\SSS}^3)$.
Note that the map $\beta \colon K^{-1}(\tilde{\SSS}^3) \to K^0( \{ \pt \}) \cong \Z$ is the square of the Bott periodicity isomorphism.
Since $g_0$ is the square of a Bott element, which is a generator of the $K$-group $K^{-1}(\tilde{\SSS}^3) \cong \Z$, we have $\beta([f^E]) = \beta([g_0]) = 1$.
Since $T^0_f$ and $T^\infty_f$ are invertible, the quarter-plane Toeplitz operator $T^{0,\infty}_f$ is Fredholm, and its Fredholm index is $1$ by Theorem~\ref{mainfamily}, which is also computed in \cite{CDS72, Dud77} by different methods.
\end{example}

Let us consider the special case of Theorem~\ref{mainfamily} when $i=1$ and $X = \{ \pt \}$.
Our extended symbol defines an element $[f^E] \in K^{-1}(\tilde{\SSS}^3)$ and, through the isomorphism $\beta \colon K^{-1}(\tilde{\SSS}^3) \overset{\cong}{\longrightarrow} \Z$, we obtain an integer $W_3(f^E) := \beta([f^E])$.
This integer corresponds to $\pi_3(GL_n(\C)) \cong \Z$ for $n \geq 2$ and is the three-dimensional winding number.
Therefore, we obtain the following result.

\begin{corollary}\label{maincorollary}
Let $f \colon \T^2 \to GL_N(\C)$ be a two-variable rational matrix function such that the associated quarter-plane Toeplitz operator $T^{0,\infty}_{f}$ is Fredholm.
Let $f^E \colon \tilde{\SSS}^3 \to GL_N(\C)$ be the extension of $f$ through matrix factorization in Proposition~\ref{ext3}.
Then the Fredholm index of $T^{0,\infty}_f$ is given by,
\begin{equation*}
	 \ind (T^{0,\infty}_f) = W_3(f^E).
\end{equation*}
\end{corollary}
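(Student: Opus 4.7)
The plan is to derive this corollary as the specialization of Theorem~\ref{mainfamily} to the case $i = 1$ and $X = \{\pt\}$, so that essentially all the work is already done in the theorem and only some bookkeeping of $K$-theoretic identifications remains. With these choices, the target group in Theorem~\ref{mainfamily} becomes $K^{-i+1}(X) = K^{0}(\{\pt\}) \cong \Z$, and the standard isomorphism realizing this $\Z$ sends the $K_1$-class of a single Fredholm operator to its Fredholm index. In particular, the class $[T^{0,\infty}_f]$ on the left-hand side of Theorem~\ref{mainfamily} corresponds under this identification to the integer $\ind(T^{0,\infty}_f)$.

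Next, I would unpack the homomorphism $\beta$ in this special case. The decomposition (\ref{Kdecomp}) reduces to
\begin{equation*}
K^{-1}(\tilde{\SSS}^3) \cong K^{-1}(\{\pt\}) \oplus K^{-1}_\cpt(\R^3) \cong 0 \oplus K^{0}(\{\pt\}) \cong \Z,
\end{equation*}
so $\beta \colon K^{-1}(\tilde{\SSS}^3) \to K^{0}(\{\pt\}) \cong \Z$ is nothing but this isomorphism, and by the discussion immediately preceding the corollary statement, the integer it assigns to $[f^E]$ is by definition $W_3(f^E)$. One can also identify it with the image of $f^E$ in $\pi_3(GL_N(\C)) \cong \Z$ for $N \geq 2$, confirming that this is indeed the three-dimensional winding number.

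Combining the two identifications above and applying Theorem~\ref{mainfamily}, the equality $[T^{0,\infty}_f] = \beta([f^E])$ in $K^{0}(\{\pt\})$ becomes, after passing to integers, exactly $\ind(T^{0,\infty}_f) = W_3(f^E)$. The main obstacle is not any substantive mathematical step, since the hard index-theoretic work (the construction of $\psi$ and $\phi$ and the commutativity of the diagram (\ref{maindiagfamily})) has already been carried out in Theorem~\ref{mainfamily}; the one point that deserves a careful cross-check is that the sign and orientation conventions implicit in $\beta$ (the choice of Bott isomorphism and the orientation of $\tilde{\SSS}^3 = \partial(\D^2 \times \D^2)$) agree with the convention defining the three-dimensional winding number, so that the identity $\ind(T^{0,\infty}_f) = W_3(f^E)$ holds on the nose rather than up to a sign. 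This can be verified on the explicit generator, for example by comparison with Example~\ref{example}, where both sides equal $1$.
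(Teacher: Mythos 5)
Your proposal is correct and follows exactly the paper's own route: the corollary is obtained by specializing Theorem~\ref{mainfamily} to $i=1$, $X=\{\pt\}$, identifying $[T^{0,\infty}_f]\in K^0(\{\pt\})\cong\Z$ with the Fredholm index and noting that $\beta\colon K^{-1}(\tilde{\SSS}^3)\to\Z$ is an isomorphism whose value on $[f^E]$ is, by the paper's definition, $W_3(f^E)$. Your extra caution about sign conventions is reasonable but moot here, since the paper simply defines $W_3(f^E):=\beta([f^E])$ and confirms the normalization on the generator in Example~\ref{example}.
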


\section{Quarter-Plane Toeplitz Operators Preserving Real Structures}
\label{Sect.5}
In this section, we discuss a variant of Theorem~\ref{mainfamily} in real $K$-theory.
For this purpose, we use Atiyah's $KR$-theory for spaces equipped with involutions \cite{At66} and Boersema--Loring's formulation for the $KO$-theory of real \CA s \cite{BL16}.
Index theory for quarter-plane Toeplitz operators preserving some real structures and application to topological corner states are discussed in \cite{Hayashi4}, which we mainly follow.

\subsection{Matrix Functions Preserving Real or Quaternionic Structures}
\label{Sect.5.1}
Let $\fC$ be a real or a quaternionic structure on $\C^n$, that is, an antiunitary operator on $\C^n$ whose square is $+1$ or $-1$.
Note that, when we consider a quaternionic structure, the positive integer $n$ must be even.
We write $\Ad_{\fC}$ for a real linear automorphism of order two on $M_n(\C)$ given by $\Ad_{\fC}(x) = \fC x \fC^*$.
We also write $*$ for the operation on $M_n(\C)$ taking the hermitian conjugate of matrices.
We write $c$ for complex conjugation on $\C$, that is, $c(z) = \bar{z}$.
Then, $(\T, c)$ is a $\Z_2$-space, which is a $\Z_2$-subspace of $(\D^2, c)$.

\begin{lemma}\label{frlemma1}
Let $\mathfrak{I}$ be a $\Z_2$-space {\upshape (i)} $(GL_n(\C), \Ad_\fC)$ or {\upshape (ii)} $(GL_n(\C), \Ad_\fC \circ *)$.
Let $(X, \zeta)$ be a $\Z_2$-space and
$f \colon (\T \times X, c \times \zeta) \to \mathfrak{I}$
be a $\Z_2$-equivariant continuous map such that for each $x \in X$, $f(x)$ is a rational matrix function of trivial partial indices.
Then, its extension $f^e$ in Lemma~\ref{extension2} is a $\Z_2$-map
$f^e \colon (\D^2 \times X, c \times \zeta) \to \mathfrak{I}$.
\end{lemma}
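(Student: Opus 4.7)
The plan is to adapt the proof of Lemma~\ref{ffh} to the equivariant setting: for each $x \in X$ I compare canonical factorizations of $f(\cdot, x)$ and $f(\cdot, \zeta(x))$, derive an alternative canonical factorization of $f(\cdot, \zeta(x))$ out of the one at $x$ using the equivariance of $f$, and then invoke the uniqueness of canonical factorizations up to a constant invertible matrix. Continuity of $f^e$ in $x$ is already supplied by Lemma~\ref{extension2}, so only the pointwise identity $\alpha(f^e(z, x)) = f^e(\bar{z}, \zeta(x))$ on $\D^2$ needs to be established, where $\alpha$ denotes the action $\Ad_\fC$ in case (i) or $\Ad_\fC \circ *$ in case (ii).

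Fix $x \in X$ and take canonical factorizations $f(z, x) = f_-(z, x) f_+(z, x)$ and $f(z, \zeta(x)) = f_-(z, \zeta(x)) f_+(z, \zeta(x))$ on $\T$. Applying $\alpha$ to the factorization at $x$ and using equivariance yields a second factorization of $f(\cdot, \zeta(x))$ on $\T$: in case (i), substituting $z \mapsto \bar{z}$ produces $f(z, \zeta(x)) = \Ad_\fC f_-(\bar{z}, x) \cdot \Ad_\fC f_+(\bar{z}, x)$; in case (ii), using $\bar{z} = z^{-1}$ on $\T$ and that $*$ reverses multiplication gives $f(z, \zeta(x)) = \Ad_\fC(f_+(z^{-1}, x)^*) \cdot \Ad_\fC(f_-(z^{-1}, x)^*)$. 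I would then verify that the new factors extend holomorphically to the correct disks: in case (i), if $f_+(z, x) = \sum_n a_n z^n$ on $D_+$ then the $\C$-antilinearity of $\Ad_\fC$ gives $\Ad_\fC f_+(\bar{z}, x) = \sum_n \Ad_\fC(a_n) z^n$, again analytic on $D_+$, and symmetrically for the other factor on $D_-$; in case (ii) the holomorphic M\"obius inversion $z \mapsto z^{-1}$ exchanges $D_+$ and $D_-$ and the two $\C$-antilinear operations $\Ad_\fC$ and $*$ together preserve holomorphy, so the two factors extend to $D_-$ and $D_+$ in the correct $(-,+)$ order despite the reversal induced by $*$.

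By the uniqueness of canonical factorization there exists $B_x \in GL_n(\C)$ relating these two factorizations of $f(\cdot, \zeta(x))$. The resulting identities on $\T$ extend by analytic continuation to the disks; substituting them into $f^e(z, x) = f_-(\bar{z}^{-1}, x) f_+(z, x)$ and computing $\alpha(f^e(z, x))$ causes $B_x$ and $B_x^{-1}$ to cancel, leaving $f_-(z^{-1}, \zeta(x)) f_+(\bar{z}, \zeta(x)) = f^e(\bar{z}, \zeta(x))$, as required.

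The main obstacle will be the bookkeeping in case (ii): one must verify that replacing $\bar{z}$ by $z^{-1}$ (so that the holomorphic inversion, not antiholomorphic conjugation, appears in the extension), combined with $\Ad_\fC$ and $*$, really produces a canonical factorization in the right $(-,+)$ order. Once this compatibility is established, the cancellation of $B_x$ at the end is formal, and case (i) reduces to an easier version using $z \mapsto \bar{z}$ directly.
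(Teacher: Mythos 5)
Your proposal is correct and follows essentially the same route as the paper's proof: use equivariance to manufacture a second canonical factorization (the paper transports the factorization at $\zeta(x)$ to one of $f(\cdot,x)$, you go the other way, which is equivalent), check analyticity of the transformed factors on the correct disks, invoke uniqueness up to a constant $B \in GL_n(\C)$, analytically continue the resulting identities, and let $B$ cancel in $f^e$. The bookkeeping you flag in case (ii) — that $*$ reverses the order and $z \mapsto z^{-1}$ swaps $D_\pm$, so the factor built from $f_+$ becomes the minus factor — is exactly the point the paper also verifies, and your treatment of it is sound.
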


\begin{proof}
By Lemma~\ref{extension2}, the map $f^e$ is continuous.
We now show its $\Z_2$-equivalence.
For each $x \in X$, we take a canonical factorization $f(x) = f_-(x) f_+(x)$.

(i) We first consider the case in which $\mathfrak{I} = (GL_n(\C), \Ad_\fC)$.
By assumption, the following equation holds for $(z,x) \in \T \times X$:
\begin{equation}\label{factnreal1}
	f_-(z,x) f_+(z,x) = f(z,x) = \fC f(\bar{z},\zeta(x)) \fC^* = \fC f_-(\bar{z}, \zeta(x))\fC^* \cdot \fC f_+(\bar{z}, \zeta(x)) \fC^*.
\end{equation}
Let $f_+^e(x)$ and $f_-^e(x)$ be continuous extensions of $f_+(x)$ and $f_-(x)$ onto $\T \cup D_+$ and $\T \cup D_-$, which are analytic on $D_+$ and $D_-$, respectively.
The function $z \mapsto \fC f_+(\bar{z}, \zeta(x)) \fC^*$ is a rational matrix function on $\T$ and $\fC f^e_+(\bar{z}, \zeta(x)) \fC^*$ provides its continuous extension onto $\T \cup D_+$ which is analytic on $D_+$ as a nonsingular matrix function.
A similar observation holds for $\fC f_-(\bar{z}, \zeta(x))\fC^*$ and
the equation (\ref{factnreal1}) provides two canonical factorizations of $f(x)$.
Therefore, for each $x \in X$, there exists $B \in GL_n(\C)$ such that $B f_+(z, x) = \fC f_+(\bar{z}, \zeta(x)) \fC^*$ and $f_-(z, x) B^{-1} = \fC f_-(\bar{z}, \zeta(x)) \fC^*$
for $z \in \T$.
By the uniqueness of analytic continuation, we obtain
\begin{equation*}
	 B f_+^e(z, x) = \fC f_+^e(\bar{z}, \zeta(x)) \fC^* \ \ \text{for} \ z \in \T \cup D_+,
\end{equation*}
\begin{equation*}
	f_-^e(z, x)B^{-1} = \fC f^e_-(\bar{z}, \zeta(x)) \fC^* \ \ \text{for} \ z \in \T \cup D_-.
\end{equation*}
Therefore, for $z \in \T \cup D_+ = \D^2$,
\begin{align*}
	f^e(z, x) &= f_-^e(\bar{z}^{-1}, x) f_+^e(z, x) 
		= f_-^e(\bar{z}^{-1}, x) B^{-1} \cdot B f_+^e(z, x) \\
		&= \fC f_-^e(z^{-1}, \zeta(x)) \fC^* \cdot \fC f_+^e(\bar{z}, \zeta(x)) \fC^*
		= \fC f^e(\bar{z}, \zeta(x)) \fC^*.
\end{align*}

(ii) We next consider the case of $\mathfrak{I} = (GL_n(\C), \Ad_\fC \circ *)$.
By assumption, we have
\begin{equation}\label{factnreal2}
	 f_-(z,x ) f_+(z, x) = f(z, x) = \fC f_+(\bar{z}, \zeta(x))^* \fC^* \cdot \fC f_-(\bar{z}, \zeta(x))^* \fC^*
\end{equation}
for $(z,x) \in \T \times X$.
The matrix function $\fC f_+^e(z^{-1}, \zeta(x))^* \fC^*$ (resp. $\fC f_-^e(z^{-1}, \zeta(x))^* \fC^*$) provides a continuous extension of $\fC f_+(\bar{z}, \zeta(x))^* \fC^*$ (resp. $\fC f_-(\bar{z}, \zeta(x)^* \fC^*$) onto $\T \cup D_-$ (resp. $\T \cup D_+$), which is analytic on $D_-$ (resp. $D_+$), and the right hand side of equation $(\ref{factnreal2})$ is also a canonical factorization of $f$.
Therefore, as in the proof of Lemma~\ref{frlemma1}, there exists $B \in GL_n(\C)$ satisfying,
\begin{equation*}
	B f_+^e(z, x) = \fC f^e_-(z^{-1}, \zeta(x))^* \fC^* \ \ \text{for} \ z \in \T \cup D_+,
\end{equation*}
\begin{equation*}
	f_-^e(z,x) B^{-1} = \fC f_+^e(z^{-1}, \zeta(x))^* \fC^* \ \ \text{for} \ z \in \T \cup D_-.
\end{equation*}
By these equations, for $z \in \T \cup D_+ = \D^2$,
\begin{align*}
	f^e(z, x) &= f_-^e(\bar{z}^{-1}, x) B^{-1} \cdot B f_+^e(z, x)
		= \fC f_+^e(\bar{z}, \zeta(x))^* \fC^* \cdot \fC f^e_-(z^{-1}, \zeta(x))^* \fC^*\\
		&= \fC (f_-^e(z^{-1}, \zeta(x)) f_+^e(\bar{z}, \zeta(x)))^* \fC^*
		= \fC f^e(\bar{z}, \zeta(x))^* \fC^*.
\end{align*}
\end{proof}

\begin{lemma}\label{frlemma3}
Let $\mathfrak{I}$ be a $\Z_2$-space {\upshape (i)} $(GL_n(\C)^\sa, \Ad_\fC)$ or {\upshape (ii)} $(GL_n(\C)^\sa, -\Ad_\fC)$.
Let $(X, \zeta)$ be a $\Z_2$-space and
$f \colon (\T \times X, c \times \zeta) \to \mathfrak{I}$
be a $\Z_2$-equivariant continuous map such that for each $x \in X$, $f(x)$ is a rational matrix function of trivial partial indices.
Then, its extension $f^e$ in Lemma~\ref{ffh} is a $\Z_2$-map
$f^e \colon (\D^2 \times X, c \times \zeta) \to \mathfrak{I}$.
\end{lemma}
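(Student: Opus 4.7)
The plan is to adapt the argument of Lemma~\ref{frlemma1} to the self-adjoint case, noting that the values in $GL_n(\C)^\sa$ are handled already by Lemma~\ref{ffh}, so only the $\Z_2$-equivariance remains to be checked. As before, it suffices to prove equivariance pointwise in $x$, so I reduce to the case $X = \{\pt\}$ and take a canonical factorization $f = f_- f_+$.

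For case (i), the assumption $f(z) = \fC f(\bar z) \fC^*$ is exactly the hypothesis of Lemma~\ref{frlemma1}(i), and the same computation goes through without modification (the hermiticity of $f$ plays no role in that argument). Thus the nontrivial point is case (ii), where the skew-equivariance $f(z) = -\fC f(\bar z) \fC^*$ introduces an overall sign that must be tracked through the uniqueness of the canonical factorization.

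For case (ii), I first observe that the right-hand side $-\fC f(\bar z)\fC^* = -\fC f_-(\bar z)\fC^* \cdot \fC f_+(\bar z)\fC^*$ yields a second canonical factorization of $f$ once the minus sign is absorbed into the $-$--factor: write $g_-(z) = -\fC f_-(\bar z)\fC^*$ and $g_+(z) = \fC f_+(\bar z)\fC^*$. The extensions $g_-^e(z) := -\fC f_-^e(\bar z)\fC^*$ and $g_+^e(z) := \fC f_+^e(\bar z)\fC^*$ are continuous on $\T \cup D_-$ and $\T \cup D_+$, respectively, and are analytic on $D_-$ and $D_+$ because the antilinearity of $\Ad_\fC$ cancels the antiholomorphicity of $z\mapsto \bar z$. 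Uniqueness of canonical factorization then produces $B \in GL_n(\C)$ with
\begin{equation*}
    -\fC f_-(\bar z)\fC^* = f_-(z) B^{-1}, \qquad \fC f_+(\bar z)\fC^* = B f_+(z),
\end{equation*}
and by the uniqueness of analytic continuation the same identities persist for the extensions $f_\pm^e$ on their respective disks.

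To finish, for $z \in \D^2$ I expand
\begin{equation*}
    \fC f^e(\bar z)\fC^* = \fC f_-^e(z^{-1})\fC^* \cdot \fC f_+^e(\bar z)\fC^*,
\end{equation*}
and substitute the two identities above (using the first one with $z$ replaced by $\bar z^{-1}$, which satisfies $|\bar z^{-1}| \geq 1$, so lies in $\T \cup D_-$, and noting $\overline{\bar z^{-1}} = z^{-1}$) to obtain $\fC f_-^e(z^{-1})\fC^* = -f_-^e(\bar z^{-1}) B^{-1}$. Multiplication then gives
\begin{equation*}
    \fC f^e(\bar z)\fC^* = -f_-^e(\bar z^{-1}) B^{-1} \cdot B f_+^e(z) = -f^e(z),
\end{equation*}
as required. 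The main (mild) obstacle is bookkeeping: one must place the minus sign on the factor that extends to $D_-$ so that the canonical-factorization uniqueness applies, and then track where it reappears after the substitution $z \mapsto \bar z^{-1}$; once this is done, the analytic-continuation step proceeds exactly as in Lemma~\ref{frlemma1}.
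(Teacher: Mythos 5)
Your proof is correct, and part (i) coincides with the paper's argument (hermitian case of Lemma~\ref{ffh} plus Lemma~\ref{frlemma1}(i)). For part (ii), however, you take a genuinely different route: you rerun the factorization-uniqueness computation of Lemma~\ref{frlemma1} directly under the skew-equivariance $f(z) = -\fC f(\bar z)\fC^*$, absorbing the minus sign into the $D_-$-factor so that $g_- = -\fC f_-(\bar{\,\cdot\,})\fC^*$, $g_+ = \fC f_+(\bar{\,\cdot\,})\fC^*$ is again a canonical factorization, and then tracking the sign through the substitution $z \mapsto \bar z^{-1}$; the bookkeeping is right and the conclusion $\fC f^e(\bar z)\fC^* = -f^e(z)$ follows. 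The paper instead disposes of (ii) in one line by multiplying by $\sqrt{-1}$: since $\fC$ is antilinear, a hermitian $f$ with $-\Ad_\fC$-equivariance becomes a skew-hermitian $\sqrt{-1}f$ with $\Ad_\fC$-equivariance, and the claim reduces to the skew-hermitian case of Lemma~\ref{ffh} together with Lemma~\ref{frlemma1}(i) (using that $(\sqrt{-1}f)^e = \sqrt{-1}f^e$, which is immediate from the factorization). Your version avoids invoking the skew-hermitian case of Lemma~\ref{ffh} and makes the sign mechanism explicit, at the cost of repeating the analytic-continuation argument; the paper's reduction is shorter and reuses existing lemmas verbatim. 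One cosmetic caveat: the equivariance is not literally a statement ``pointwise in $x$'' (it compares the fiber at $x$ with the fiber at $\zeta(x)$), so rather than ``reducing to $X = \{\pt\}$'' you should keep the labels $x$ and $\zeta(x)$ as in the proof of Lemma~\ref{frlemma1}; your computation is unchanged by reinstating them.
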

\begin{proof}
{\upshape (i)} follows from the hermitian case of Lemma~\ref{ffh} and (i) of Lemma~\ref{frlemma1}.
For {\upshape (ii)}, note that a $\Z_2$-map
$f \colon (\T \times X, c \times \zeta) \to (GL_n(\C)^\sa, -\Ad_\fC)$
provides, by multiplication by the imaginary unit, a $\Z_2$-map
$\sqrt{-1}f \colon (\T \times X, c \times \zeta) \to (GL_n(\C)^\ska, \Ad_\fC)$.
Therefore, the result follows from the skew-hermitian case of Lemma~\ref{ffh} and (i) of Lemma~\ref{frlemma1}.
\end{proof}

Let $\nu$ be an involution\footnote{The $\Z_2$-space $(\tilde{\SSS}^3, \nu)$ is $\Z_2$-equivariantly homeomorphic to the $\Z_2$-space $\SSS^{2,2}$ in \cite{At66}.} on $\tilde{\SSS}^3$ given by the restriction of $c^2 = c \times c$ on $\C^2$ onto $\tilde{\SSS}^3$.
By Proposition~\ref{ext3} and Lemmas~\ref{frlemma1} and \ref{frlemma3}, we obtain the following result.

\begin{proposition}\label{frlemma4}
	Let $(X, \zeta)$ be a $\Z_2$-space, and let $\mathfrak{I}$ be a $\Z_2$-space $(GL_n(\C), \Ad_\fC)$, $(GL_n(\C), \Ad_\fC \circ *)$, $(GL_n(\C)^\sa, \Ad_\fC)$ or $(GL_n(\C)^\sa, -\Ad_\fC)$.
	Let $f \colon (\T^2 \times X, c^2 \times \zeta) \to \mathfrak{I}$
be a $\Z_2$-continuous map such that for each $x \in X$, $f(x)$ is a two-variable rational matrix function for which the associated quarter-plane Toeplitz operator $T^{0,\infty}_{f(x)}$ is Fredholm.
Then, through matrix factorization, there canonically associates a $\Z_2$-continuous map
$f^E \colon (\tilde{\SSS}^3 \times X, \nu \times \zeta) \to \mathfrak{I}$ that extends $f$.
\end{proposition}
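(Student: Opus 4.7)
The plan is to combine Proposition~\ref{ext3} with the $\Z_2$-equivariant extension results of Lemmas~\ref{frlemma1} and \ref{frlemma3}, treating the two hemispheres $\T \times \D^2$ and $\D^2 \times \T$ of $\tilde{\SSS}^3$ separately and then observing that they glue compatibly. Continuity of $f^E$ is already guaranteed by Proposition~\ref{ext3}, so the only content to add is $\Z_2$-equivariance.

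Recall how the extension in Proposition~\ref{ext3} is built. For the extension onto $\T \times \D^2 \times X$, we Fourier-transform in the first variable and view $T^0_{f(x)}$ as the family of invertible Toeplitz operators $\{ T_{f(z,\cdot,x)} \}_{(z,x) \in \T \times X}$ with symbols $f(z,\cdot,x) \in C(\T, GL^{(i)}_N(\C))$ in the second ($w$) variable. Applying Lemma~\ref{extension2} with parameter space $\T \times X$ then produces a continuous extension on $\T \times \D^2 \times X$. Analogously, the invertibility of $T^\infty_{f(x)}$ yields the extension on $\D^2 \times \T \times X$, and the two extensions coincide with $f$ on the common $\T^2 \times X$.

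Now I would upgrade each step to the $\Z_2$-equivariant setting. For the $\T \times \D^2 \times X$ piece, the $\Z_2$-action $c^2 \times \zeta = c \times c \times \zeta$ acts by $c$ on the $w$-circle being extended and by $c \times \zeta$ on the parameter space $\T \times X$. This is precisely the hypothesis of Lemma~\ref{frlemma1} (for $\mathfrak{I}$ equal to $(GL_n(\C), \Ad_\fC)$ or $(GL_n(\C), \Ad_\fC \circ *)$) or of Lemma~\ref{frlemma3} (for the two self-adjoint cases), with the role of the parameter space $X$ there played by $(\T \times X, c \times \zeta)$ here. Those lemmas show that the extension of $f$ onto $\D^2 \times \T \times X$ (with the $\D^2$ being the $w$-disk) is automatically $\Z_2$-equivariant. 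The same argument applied to the first variable gives $\Z_2$-equivariance on the $\D^2 \times \T \times X$ hemisphere.

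Finally, since both pieces restrict to the original $\Z_2$-map $f$ on $\T^2 \times X$, they agree on the intersection of the two hemispheres; the gluing produces a single continuous $\Z_2$-map $f^E \colon (\tilde{\SSS}^3 \times X, \nu \times \zeta) \to \mathfrak{I}$, as claimed. The whole argument is essentially a bookkeeping reduction; the only mildly delicate point is checking that the $\Z_2$-action $\nu$ on $\tilde{\SSS}^3$ correctly restricts to $c \times c$ on each hemisphere so that Lemmas~\ref{frlemma1} and \ref{frlemma3} can be invoked on the nose. This is immediate from the definition $\nu = c^2|_{\tilde{\SSS}^3}$, so no genuine obstruction arises.
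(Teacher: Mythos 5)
Your proposal is correct and takes essentially the same route as the paper, which proves Proposition~\ref{frlemma4} simply by combining Proposition~\ref{ext3} with Lemmas~\ref{frlemma1} and \ref{frlemma3} --- i.e., applying the one-variable equivariant extension lemmas with parameter space $(\T \times X, c \times \zeta)$ on each hemisphere of $\tilde{\SSS}^3$ and gluing along $\T^2 \times X$, exactly as you describe. The only (cosmetic) slip is that you once write $\D^2 \times \T \times X$ for the hemisphere in which the $w$-disk sits, where the consistent ordering is $\T \times \D^2 \times X$; your parenthetical makes the intent clear, so nothing is affected.
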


\subsection{Index Theorem: Real Cases}
\label{Sect.5.2}
Let $\fR$ be the antiunitary operator on $\C^n$ given by $\fR = \diag(c,\cdots,c)$, where $c$ is the complex conjugation on $\C$.
Let $j$ be an antiunitary operator on $\C^2$ given by $j(x,y) = (-\bar{y}, \bar{x})$.
When $n$ is even, let $\fJ = \diag(j,\cdots,j)$ be a quaternionic structure on $\C^n$.
Let $A$ be a unital \CA \ equipped with a {\em real structure}\footnote{An antilinear $*$-automorphism on $A$ satisfying $\fr^2=1$.} $\fr$.
Let $\tau$ be the antiautomorphism on $A$ of order two given by $\tau(a) = \fr(a^*)$.
We call $\tau$ the {\em transposition} and write $a^\tau$ for $\tau(a)$.
The pair $(A,\tau)$ is called {\em \TA} \ in \cite{BL16}.
The transposition $\tau$ on $A$ is extended to the transposition on the matrix algebra $M_n(A)$ by $(a_{ij})^\tau = (a_{ji}^\tau)$.
Let $\sharp \bm{\otimes} \tau$ be a transposition on $M_2(A)$ defined by
\begin{equation*}
\left(
    \begin{array}{cc}
           a_{11}&a_{12}\\
           a_{21}&a_{22}
    \end{array}
\right)^{\sharp \bm{\otimes} \tau}
	=
\left(
    \begin{array}{cc}
           a_{22}^\tau & -a_{12}^\tau \\
           -a_{21}^\tau & a_{11}^\tau
    \end{array}
\right),
\end{equation*}
which is extended to the transposition on $M_{2n}(A)$ by $(b_{ij})^{\sharp \bm{\otimes} \tau} = (b_{ji}^{\sharp \bm{\otimes} \tau})$ where $b_{ij} \in M_2(A)$ and $1 \leq i,j \leq n$.
For $i = -1,0, \ldots, 6$, let $n_{r,i}$ be a positive integer, $R^{(i)}$ be a relation and $I^{(i)}_r$ be a matrix as indicated in Table~\ref{table1}.
Let $GL_n^{(i)}(A, \tau)$ be the set of all invertible elements in $M_{n_{r,i} \cdot n}(A)$ satisfying the relation $R^{(i)}$.
Following Boersema--Loring \cite{BL16}, we define\footnote{In \cite{BL16}, $KO$-groups are defined via unitaries though we can also define $KO$-groups through invertibles preserving symmetries in \cite{BL16} since deformation retraction from invertibles to unitaries preserves these symmetries. We discuss invertible elements since, in our application discussed in Sect.~\ref{Sect.6}, Hamiltonians will be expressed as multivariable nonsingular rational matrix functions, though not necessarily unitaries.}
the $KO$-group of $(A, \tau)$ as $KO_i(A, \tau) = \cup_{n=1}^\infty GL_n^{(i)}(A, \tau)/\sim_i$ where the equivalence relation $\sim_i$ is generated by homotopy and stabilization by $I^{(i)}_r$.
For a finite $\Z_2$-CW complex $(X,\zeta)$, we associate a (complex) \CA \ $C(X)$ with a transposition $\tau_\zeta$ given by $(f^{\tau_\zeta})(x) = f(\zeta(x))$.
We define $KR^{-i}(X, \zeta) = KO_i(C(X), \tau_\zeta)$ which is Atiyah's Real $K$-groups \cite{At66} for the $\Z_2$-space $(X,\zeta)$.
Note that an element $f \in GL_n^{(i)}(C(X), \tau_\zeta)$ corresponds to a $\Z_2$-equivariant continuous map $f \colon (X, \zeta) \to GL^{(i)}_{r,n}(\C)$ where $GL^{(i)}_{r,n}(\C)$ is the $\Z_2$-space as indicated in Table~\ref{table1}.
\begin{table}
\caption{$KO$-theory via invertible elements (Boersema--Loring \cite{BL16})}
\label{table1}
\centering
\begin{tabular}{|c|c|c|c|c|}
\hline
$i$ & $n_{r,i}$ & $R^{(i)}$ & $I^{(i)}_r$ & $GL^{(i)}_{r,n}(\C)$   \\ \hline \hline
$-1$ & $1$ & $x^\tau = x$ & $1$ & $(GL_n(\C), \Ad_\fR \circ*)$  \\ \hline
$0$ & $2$ & $x = x^*$, $x^\tau = x^*$ & $\diag(1, -1)$ & $(GL_{2n}(\C)^\sa, \Ad_\fR)$  \\ \hline
$1$ & $1$ & $x^\tau = x^*$ & $1$ & $(GL_n(\C), \Ad_\fR)$ \\ \hline
$2$ & $2$ & $x = x^*$, $x^\tau = -x$ & $\left( \begin{array}{cc} 0& \sqrt{-1} \\ -\sqrt{-1} & 0 \end{array} \right)$ & $(GL_{2n}(\C)^\sa, -\Ad_\fR)$ \\ \hline
$3$ & $2$ & $x^{\sharp \bm{\otimes} \tau} = x$ & $1_2$ & $(GL_{2n}(\C), \Ad_\fJ \circ*)$ \\ \hline
$4$ & $4$ & $x = x^*$, $x^{\sharp \bm{\otimes} \tau} = x^*$ & $\diag(1_2, -1_2)$ & $(GL_{4n}(\C)^\sa, \Ad_\fJ)$ \\ \hline
$5$ & $2$ & $x^{\sharp \bm{\otimes} \tau} = x^*$ & $1_2$ & $(GL_{2n}(\C), \Ad_\fJ)$ \\ \hline
$6$ & $2$ & $x = x^*$, $x^{\sharp \bm{\otimes} \tau} = -x$ & $\left( \begin{array}{cc} 0& \sqrt{-1} \\ -\sqrt{-1} &0 \end{array} \right)$ & $(GL_{2n}(\C)^\sa, -\Ad_\fJ)$ \\ \hline
\end{tabular}
\end{table}

On $l^2(\Z^2)$, we consider an antiunitary operator of order two given by the pointwise operation of complex conjugation, for which we simply write $c$.
Conjugation of $c$ provide real structures for the quarter-plane Toeplitz algebra $\TTzi$ and the pull-back \CA \ $\Szi$.
We write $\tau_{0,\infty}$ and $\tau_\mathcal{S}$ for corresponding transpositions on $\TTzi$ and $\Szi$, respectively.
For $i = -1,0, \ldots, 6$, and a positive integer $N$, let $GL_{r,N}^{(i)}$ be the $\Z_2$-space indicated in Table~\ref{table1}.
Let $(X, \zeta)$ be a finite $\Z_2$-CW complex, and let $f \colon (\T^2 \times X, c^2 \times \zeta) \to GL^{(i)}_{r,N}(\C)$ be a $\Z_2$-map such that for each $x \in X$, $f(x)$ is a two-variable rational matrix function and the associated quarter-plane Toeplitz operator $T^{0,\infty}_{f(x)}$ is Fredholm.
In this case, half-plane Toeplitz operators $\{T^0_{f(x)}\}_{x \in X}$ and $\{T^\infty_{f(x)}\}_{x \in X}$ and quarter-plane Toeplitz operators $\{T^{0,\infty}_{f(x)}\}_{x \in X}$ preserve some symmetry corresponding to the $\Z_2$-equivalence of the map $f$.
As in \cite{Hayashi3}, pairs of invertible half-plane Toeplitz operators $(T^0_f, T^\infty_f)$ defines an element $[(T^0_f, T^\infty_f)]$ of the $KO$-group $KO_i(\Szi \bm{\otimes} C(X), \tau_\mathcal{S} \bm{\otimes} \tau_\zeta)$, and (self-adjoint, when $i$ is even) Fredholm quarter-plane Toeplitz operators define an element $[T^{0,\infty}_{f}]$ of the $KR$-group $KR^{-i+1}(X, \zeta)$.
By Proposition~\ref{frlemma4}, through the matrix factorization, there associates a $\Z_2$-map $f^E$ which defines an element $[f^E]$ of the $KR$-group $KR^{-i}(\tilde{\SSS}^3 \times X, \nu \times \zeta)$.
Let
\begin{equation*}
\partial^{\text{qT}} \colon KO_i(\Szi \bm{\otimes} C(X), \tau_\mathcal{S} \bm{\otimes} \tau_\zeta) \to KO_{i-1}(C(X), \tau_\zeta) \cong KR^{-i+1}(X, \zeta),
\end{equation*}
be the boundary map of the $24$-term exact sequence for $KO$-theory associated with the short exact sequence of \TA s,
\begin{equation}\label{exttens2}
	0 \to (\K \bm{\otimes} C(X), \tau_\K \bm{\otimes} \tau_\zeta) \to (\TTzi \bm{\otimes} C(X), \tau_{0,\infty} \bm{\otimes} \tau_\zeta) \overset{\gamma \bm{\otimes} 1}{\longrightarrow} (\Szi \bm{\otimes} C(X), \tau_\mathcal{S} \bm{\otimes} \tau_\zeta) \to 0.
\end{equation}
Then, we have $\partial^{\text{qT}}([(T^0_f, T^\infty_f)]) = [T^{0,\infty}_{f}]$.
For $(\tilde{\SSS}^3, \nu)$, we take a $\Z_2$-fixed point $s_0 = (1,1)$ in $\tilde{\SSS}^3$ as its base point, and obtain an isomorphism
$KR^{-i}(\tilde{\SSS}^3 \times X, \nu \times \zeta) \cong KR^{-i}(X, \zeta) \oplus KR^{-i+1}(X, \zeta)$.
Let $\beta \colon KR^{-i}(\tilde{\SSS}^3 \times X, \nu \times \zeta) \to KR^{-i+1}(X, \zeta)$ be the projection corresponding to this decomposition.

\begin{theorem}\label{mainfamilyKO}
Let $(X, \zeta)$ be a finite $\Z_2$-CW complex.
Let $f \colon (\T^2 \times X, c^2 \times \zeta) \to GL^{(i)}_{r,N}(\C)$ be a $\Z_2$-map such that for each $x \in X$, $f(x)$ is a two-variable rational matrix function and the associated quarter-plane Toeplitz operator $T^{0,\infty}_{f(x)}$ is Fredholm.
Let $f^E \colon (\tilde{\SSS}^3 \times X, \nu \times \zeta) \to GL^{(i)}_{r,N}(\C)$ be the extension of $f$ through matrix factorization in Proposition~\ref{frlemma4}.
Then, we have $[T^{0,\infty}_f] = \beta([f^E])$ in $KR^{-i+1}(X, \zeta)$.
\end{theorem}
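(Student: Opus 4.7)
The plan is to adapt the proof of Theorem~\ref{mainfamily} to the $\Z_2$-equivariant setting by constructing a $KR$-theoretic analog of diagram~(\ref{maindiagfamily}). Each complex $K$-group in that diagram is replaced by the corresponding $KR$-group with its involution, the auxiliary isomorphism $\bar{\alpha}$ is replaced by a $KR$-analog $\bar{\alpha}^{KR}$ that comes from $KR$-Bott periodicity, and one constructs $\Z_2$-equivariant versions $\psi^{KR}$ and $\phi^{KR}$ of the two oblique arrows satisfying $\bar{\alpha}^{KR} \circ \phi^{KR} = \beta$. Theorem~\ref{mainfamilyKO} then follows from a $KR$-analog of Proposition~\ref{mainpsingle}, which asserts $\psi^{KR}([(T^0_f, T^\infty_f)]) = \phi^{KR}([f^E])$, together with the commutativity of the resulting diagram.

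First I would establish the $\Z_2$-equivariant analog of Lemma~\ref{lemma2}. The essential ingredient is that for each $i \in \{-1, 0, \ldots, 6\}$ the symbol map $\sigma \colon GL^{(i)}_{r,n}(\TT) \to GL^{(i)}_{r,n}(C(\T))$ is a Hurewicz fibration. This is proved along the lines of Lemma~\ref{fiblem}: Toeplitz compression $f \mapsto T_f$ is a continuous linear section compatible with each symmetry $R^{(i)}$ of Table~\ref{table1}, and Michael's selection theorem applied within the symmetry-respecting orbit of the conjugation action yields the required local sections. Using this fibration I would lift a $\Z_2$-equivariant homotopy from $f$ to a pullback $\pi^* g$ to an $R^{(i)}$-preserving path $\{(T^0_t, T^\infty_t)\}$ in $GL^{(i)}_{r,N'}(\Szi \bm{\otimes} C(X))$, together with matrix functions $t^0$ and $\tilde{t}^\infty$ on $(\T \times X, c \times \zeta)$ carrying the same symmetry type as $f$.

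Next, the definitions of $\psi^{KR}$ and $\phi^{KR}$ proceed as in Sect.~\ref{Sect4.1} and Sect.~\ref{Sect4.2}: the former via the Fredholm family $F$ assembled from the lifted homotopy and connecting paths $a_t, b_t$, and the latter via the Toeplitz family $T_G$ assembled from the $\Z_2$-equivariant extended symbol $f^E$ supplied by Proposition~\ref{frlemma4}. All manipulations in the proofs of Proposition~\ref{CDSthm} and Proposition~\ref{welldefphi}, namely the three-region decomposition of $(\Z_{\geq 0})^2$ in Figure~\ref{Fig5}, the gluing/excision arguments of Figures~\ref{Fig2}--\ref{Fig3}, and the linear interpolation $(1-t)F + tT'_G$ identifying the two families modulo compact perturbations, are manifestly compatible with every symmetry $R^{(i)}$ once their inputs are $\Z_2$-equivariant. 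The coincidence $\psi^{KR}([(T^0_f, T^\infty_f)]) = \phi^{KR}([f^E])$ then follows as in Proposition~\ref{mainpsingle}, using the $\Z_2$-equivariant version of Lemma~\ref{extension3} implicit in Proposition~\ref{frlemma4}.

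The main obstacle lies in step one: guaranteeing that the stabilization and homotopy-lifting of Lemma~\ref{lemma2} preserve all eight real or quaternionic symmetry types simultaneously. Two subtleties appear. First, the polar decomposition $x \mapsto x \lvert x \lvert^{-1}$ used to retract invertibles onto unitaries must remain inside $GL^{(i)}_{r,N}$, which is automatic since every relation $R^{(i)}$ is preserved by continuous functional calculus. Second, the local sections of fibration~(\ref{fib1}) used in Lemma~\ref{fiblem} must be chosen equivariantly with respect to $\Ad_\fR$, $\Ad_\fJ$ and their skew variants; this is handled by applying Michael's selection theorem to convex sets of sections that are closed under the relevant involution, yielding a symmetry-preserving selection. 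Once these equivariant fibrations are in hand, the remainder of the complex-case argument propagates under $K^{-i} \rightsquigarrow KR^{-i}$, and the identification $\bar{\alpha}^{KR} \circ \phi^{KR} = \beta$ reduces to recognizing the Toeplitz push-forward on the reduced summand $\widetilde{KR}^{-i}(\tilde{\SSS}^3 \times X, \nu \times \zeta)$ as the $KR$-Bott map landing in $KR^{-i+1}(X,\zeta)$.
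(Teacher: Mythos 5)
Your overall architecture is exactly the paper's: replace every complex $K$-group in diagram~(\ref{maindiagfamily}) by the corresponding $KR$/$KO$-group, note that the matrix-factorization input is already equivariant by Sect.~\ref{Sect.5.1}, and rerun the constructions of $\psi$ and $\phi$ verbatim. The parts of your argument asserting that the polar retraction, the three-region decomposition of Proposition~\ref{CDSthm}, the gluing arguments of Proposition~\ref{welldefphi}, and the linear interpolation $(1-t)F+tT'_G$ all respect the relations $R^{(i)}$ are correct and match the paper.

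The gap is in the one step where genuinely new work is required: the $\Z_2$-equivariant homotopy lifting property replacing Lemma~\ref{fiblem}. To lift the $\Z_2$-homotopy $\{g_t\}$ to an $R^{(i)}$-preserving path $\{(T^0_t,T^\infty_t)\}$ you need $\sigma$ to be a fibration \emph{in the equivariant sense}, and your proposed route --- ``apply Michael's selection theorem to convex sets of sections closed under the relevant involution'' --- does not establish this. The spaces $U_n(\TT)$ and $U_n(C(\T))$ are not convex, the genuinely convex objects in Lemma~\ref{fiblem} enter only through the local sections of the orbit maps $GL_n(\TT)_1 \to (GL_n(\TT)/GL_n(\TT)^{\fs})_1$, and near a fixed point of the involution on the base the existence of an intertwining local section is precisely the hard part, not something averaging or closure under the involution hands you. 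The paper instead invokes Bredon's Theorem~4.1: since (\ref{fib1}) and (\ref{fib2}) are already Hurewicz fibrations, it suffices to check that the restrictions to the $\Z_2$-\emph{fixed-point sets} are fibrations (Proposition~\ref{refib} and Lemma~\ref{refib2}). Cases (i), (iii), (iv) of Lemma~\ref{refib2} are then routine real/skew analogues, but the transpose-type symmetry $x^\tau = x$ (rows $i=-1,3$ of Table~\ref{table1}, case (ii)) is not: the fixed-point set $U_n(\TT)^{\fc_\TT\circ *}$ is not an orbit of the conjugation action on self-adjoint unitaries, and the paper has to embed it into an auxiliary real Banach algebra $A^{\tilde{\fc}_\TT} \subset M_{2n}(\TT)$ and identify it with $(U(A^{\tilde{\fc}_\TT})^{\ska})^{-\fe}$ in order to apply Wood's Lemma~4.2 and obtain the local sections. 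Your proposal neither performs this reduction to fixed-point sets nor addresses the transpose case, so as written the key lifting step is unsupported.
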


The proof of Theorem~\ref{mainfamily} concerns three parts: matrix factorizations, the homotopy lifting property and $K$-theory.
For Theorem~\ref{mainfamilyKO}, matrix factorizations are discussed in Sect.~\ref{Sect.5.1}.
Here, we note the following result concerning the $\Z_2$-equivariant homotopy lifting property.
For $\fC = \fR$ or $\fJ$,
let $\fc_\TT$ be an involution on $M_n(\TT)$ given by $\fc_\TT(T) = \fC T \fC^*$, and let $\fc_\T$ be an involution on $M_n(C(\T))$ given by $\fc_\T(f)(z) = \fC f(\bar{z}) \fC^*$ for $z \in \T$.

\begin{proposition}\label{refib}
The following are Serre $\Z_2$-fibrations:
\begin{enumerate}
\renewcommand{\labelenumi}{(\roman{enumi})}
\item $\sigma \colon (U_n(\TT), \fc_\TT) \to (U_n(C(\T)), \fc_\T)$,
\item $\sigma \colon (U_n(\TT), \fc_\TT \circ *) \to (U_n(C(\T)), \fc_\T \circ *)$,
\item $\sigma \colon (U_n(\TT)^\sa, \fc_\TT) \to (U_n(C(\T))^\sa, \fc_\T)$,
\item $\sigma \colon (U_n(\TT)^\sa, -\fc_\TT) \to (U_n(C(\T))^\sa, -\fc_\T)$.
\end{enumerate}
\end{proposition}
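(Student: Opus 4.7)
The plan is to prove each map is a $\Z_2$-equivariant fiber bundle over a paracompact Hausdorff base, which will imply it is a Serre $\Z_2$-fibration. By the standard characterization of equivariant fibrations for discrete groups, a $\Z_2$-equivariant map is a Serre $\Z_2$-fibration if and only if both the map and its restriction $\sigma^{\Z_2}$ to the $\Z_2$-fixed subspaces are Serre fibrations. The non-equivariant statement for case (iii) is Lemma~\ref{fiblem}, and the remaining three cases follow from the same template (or from Coburn--Douglas--Singer for (i)), so the burden is on $\sigma^{\Z_2}$.

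For the fixed-point map I would rerun the proof of Lemma~\ref{fiblem} while verifying that every construction remains in the $\Z_2$-fixed subspace. Three compatibilities make this essentially automatic. First, the Toeplitz section $f \mapsto T_f$ from $M_n(C(\T))$ to $M_n(\TT)$ intertwines $\fc_\T$ with $\fc_\TT$: this is because the Fourier transform conjugates the pointwise complex conjugation on $l^2(\Z)$ to the map $f \mapsto \overline{f(\bar{\cdot})}$ on $C(\T)$, so $T_{\fc_\T(f)} = \fc_\TT(T_f)$, and the section also commutes with $*$; hence it respects each of the four involutions in (i)--(iv). Second, the polar decomposition $T \mapsto T\lvert T\lvert^{-1}$ is built from $T \mapsto T^*$ and continuous functional calculus on $T^*T$, both of which commute with each involution, so it restricts to the corresponding fixed subspaces. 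Third, in the self-adjoint cases (iii), (iv), any self-adjoint compact perturbation used in the proof of Lemma~\ref{fiblem} to make a lift invertible can be averaged over $\Z_2$ to lie in the fixed subspace. Starting from a $\Z_2$-fixed unitary $u$, these facts produce a $\Z_2$-fixed (self-adjoint) invertible lift $s$, and the affine section $l(f) = T_f + (s - T_{\sigma(s)})$ followed by polar decomposition yields a continuous $\Z_2$-equivariant local section of $\sigma^{\Z_2}$ through $s$.

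The main obstacle is the appeal to Michael's selection theorem in Wood's bundle-theoretic step, which provides a continuous local section of the quotient $U_n(\TT)_1 \to (U_n(\TT)/U_n(\TT)^\fs)_1$: convexity, essential for Michael, is not available on unitaries. I would circumvent this by working first at the level of invertibles, where the ambient space is an open convex subset of a Banach space; there a continuous local section of $GL_n(\TT)_1 \to (GL_n(\TT)/(GL_n(\TT))^\fs)_1$ exists, and averaging it over the compact group $\Z_2$ produces a $\Z_2$-equivariant local section, which the equivariant polar decomposition then transports back to unitaries. Combined with the conjugation action $T \mapsto TsT^*$--which is $\Z_2$-equivariant precisely because $s$ is $\Z_2$-fixed--this furnishes a $\Z_2$-equivariant local trivialization of $\sigma^{\Z_2}$ in a $\Z_2$-invariant neighborhood of each orbit, completing the fiber bundle picture and hence the proof.
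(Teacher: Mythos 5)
Your overall strategy coincides with the paper's: reduce via the criterion for equivariant fibrations (Bredon) to showing that, in addition to the non-equivariant statements (\ref{fib1}) and (\ref{fib2}), the restrictions of $\sigma$ to the $\Z_2$-fixed point sets are fibrations, and then redo the Coburn--Douglas--Singer/Wood argument there. For cases (i), (iii) and (iv) this works essentially as you describe, because the fixed-point sets are the (self-adjoint, resp.\ skew-adjoint after multiplying by $\sqrt{-1}$) unitaries of the \emph{real} Banach algebras $M_n(\TT)^{\fc_\TT}$ and $M_n(C(\T))^{\fc_\T}$, and the compression section, polar decomposition, Michael's selection theorem and Wood's conjugation-orbit lemma all go through over real Banach algebras. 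The genuine gap is case (ii). The fixed-point set of $\fc_\TT \circ *$ consists of the ``transpose-symmetric'' unitaries $T = \fC T^* \fC^*$; this is neither a group nor the self-adjoint unitaries of a real subalgebra, so the conjugation action $T \mapsto TsT^*$ that underlies your ``local trivialization in a neighborhood of each orbit'' does not act on it, and the template of Lemma~\ref{fiblem} does not apply as stated. This is exactly the case the paper has to treat separately: it doubles the algebra to $M_{2n}(\TT)$, identifies $U_n(\TT)^{\fc_\TT\circ *}$ with the skew-adjoint unitaries in a real form $A^{\tilde{\fc}_\TT}$ that anti-commute with a complex structure $e$, and then invokes Wood's Lemma~$4.2$ for the action of $U(A^{\tilde{\fc}_\TT})^{\fe}$ on that space. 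Your proposal needs some such replacement of the homogeneous-space structure for (ii); without it the claim that ``the remaining three cases follow from the same template'' is not justified.

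Two secondary points. First, both of your averaging steps are incorrect as stated and also unnecessary: on the fixed-point spaces the $\Z_2$-action is trivial, so there is nothing to average, and in any case averaging a self-adjoint compact perturbation $k$ with $\fc_\TT(k)$ need not preserve invertibility of $T_u + k$, while averaging a local section of the nonlinear quotient $GL_n(\TT)_1 \to (GL_n(\TT)/(GL_n(\TT))^\fs)_1$ need not produce a section. The correct move is to make the constructions inside the fixed real subalgebra from the start, e.g.\ take the perturbation $k$ to be a multiple of the spectral projection of the self-adjoint Fredholm lift onto a small interval around $0$, which is automatically $\fc_\TT$-fixed. Second, the obstacle you flag about Michael's theorem and convexity of unitaries is not an obstacle in the paper's argument: Lemma~\ref{fiblem} already applies Michael only at the level of invertibles and descends to unitaries by the polar map $d$, which is precisely the workaround you propose.
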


Since (\ref{fib1}) and (\ref{fib2}) are Hurewicz fibrations and by Theorem~$4.1$ of \cite{Bre67}, to show Proposition~\ref{refib}, it is sufficient to show that the restrictions on the $\Z_2$-fixed point sets are fibrations.

\begin{lemma}\label{refib2}
The following are Hurewicz fibrations:
\begin{enumerate}
\renewcommand{\labelenumi}{(\roman{enumi})}
\item $\sigma \colon U_n(\TT)^{\fc_\TT} \to U_n(C(\T))^{\fc_\T}$,
\item $\sigma \colon U_n(\TT)^{\fc_\TT \circ *} \to U_n(C(\T))^{\fc_\T \circ *}$,
\item $\sigma \colon (U_n(\TT)^\sa)^{\fc_\TT} \to (U_n(C(\T))^\sa)^{\fc_\T}$,
\item $\sigma \colon (U_n(\TT)^\sa)^{-\fc_\TT} \to (U_n(C(\T))^\sa)^{-\fc_\T}$.
\end{enumerate}
\end{lemma}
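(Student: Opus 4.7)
The plan is to adapt the proof of Lemma~\ref{fiblem} to each of the four $\Z_2$-fixed point settings, verifying $\Z_2$-equivariance at every construction step. Each target is a paracompact Hausdorff subspace of a Banach space, so by \cite{Spanier66} it suffices to show that each map is a fiber bundle over each connected component of its image; this reduces to exhibiting a continuous local section around $\sigma(s)$ for every $s$ in the source.

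First I would verify that the three main constructions from Lemma~\ref{fiblem} survive the symmetry constraints. The compression $f \mapsto T_f$ intertwines $\fc_\T$ with $\fc_\TT$, since $P$ is fixed by $\fc_\TT$ and $\fc_\TT(M_f) = M_{\fc_\T(f)}$; combined with $T_{f^*} = T_f^*$, it also intertwines $\fc_\T \circ *$ with $\fc_\TT \circ *$. Polar normalization $d(T) = T\lvert T\lvert^{-1}$ respects all four symmetries: for (i), (iii), (iv) this is immediate from $\fc_\TT$ being a $*$-automorphism, and for (ii), if $\fc_\TT(x) = x^*$ and $x = u\lvert x\lvert$ is the polar decomposition, then $\fc_\TT(\lvert x\lvert^2) = \fc_\TT(x^*x) = xx^* = \lvert x^*\lvert^2$ and $\lvert x^*\lvert = u \lvert x\lvert u^*$, giving $\fc_\TT(u) = x^*\lvert x^*\lvert^{-1} = \lvert x\lvert u^* \cdot u\lvert x\lvert^{-1}u^* = u^*$, so $u$ is $(\fc_\TT \circ *)$-fixed. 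Local lift existence near image points: for (iii) and (iv), a self-adjoint Fredholm compression has index zero and can be perturbed to a self-adjoint invertible by a $\pm\fc_\TT$-fixed self-adjoint compact operator, using density of the $\pm\fc_\TT$-fixed self-adjoint compacts inside the appropriate real subspace; for (i) and (ii), the affine lift $l(f) = T_f + k$ with $k = s - T_{\sigma(s)}$ is invertible on an open neighborhood of $\sigma(s)$, and $k$ automatically lies in the correct fixed subset of the compacts since both $s$ and $T_{\sigma(s)}$ do.

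Next I would rerun the orbit/quotient argument of Lemma~\ref{fiblem} inside the symmetry-fixed subsets. For (i), (iii), (iv), the $\fc_\TT$-fixed part of $M_n(\TT)$ is a real $C^*$-subalgebra, the fixed-point Toeplitz extension is a short exact sequence of real $C^*$-algebras, and the Michael selection step from \cite{Michael59, Wood} applies verbatim inside this real algebra since it uses only real-convexity. For (ii), the transposition $\fc_\TT \circ *$ makes $M_n(\TT)$ into a \TA \ in the sense of \cite{BL16}; the $\tau$-fixed set is a real closed convex subset stable under conjugation by $\tau$-fixed unitaries, so Michael's theorem still produces a continuous local section of the quotient $U_n(\TT)_1^{\fc_\TT \circ *} \to (U_n(\TT)^{\fc_\TT \circ *}/(U_n(\TT)^{\fc_\TT \circ *})^\fs)_1$. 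Combining this local section with $d$ and the conjugation-orbit identification $(U_n(\TT)/U_n(\TT)^\fs)_1 \cong U_n(\TT)^\sa_s$ (or its unitary analogue for (i) and (ii)), restricted to the relevant fixed subspaces, yields the required continuous local section of $\sigma$ around $\sigma(s)$ in each case.

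The step I expect to be the main obstacle is case (ii). Because $\fc_\TT \circ *$ is an antilinear antihomomorphism, the $\tau$-fixed subset is not a subalgebra in the usual sense, so the Michael selection and stabilizer-quotient machinery must be carefully restated in the \TA \ framework; the polar-decomposition identity $\lvert x^*\lvert = u\lvert x\lvert u^*$ is the algebraic input that makes the construction go through. Cases (i), (iii), (iv) are then routine translations of Lemma~\ref{fiblem} into the real $C^*$-algebraic setting, with the only extra check being density of the $\pm\fc_\TT$-fixed self-adjoint compacts inside the self-adjoint compacts of the prescribed symmetry type.
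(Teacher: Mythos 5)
Your reduction to a fiber-bundle statement and your treatment of (i), (iii), (iv) match the paper's (the paper disposes of (i), (iii) as ``real analogues'' of Lemma~\ref{fiblem} and of (iv) by multiplying by $\sqrt{-1}$ to pass to the $\fc_\TT$-fixed skew-adjoint case, which is equivalent to your perturbation argument). The genuine gap is in case (ii), exactly where you flagged the difficulty. Your proposed quotient
$U_n(\TT)^{\fc_\TT \circ *}_1 \to \bigl(U_n(\TT)^{\fc_\TT \circ *}/(U_n(\TT)^{\fc_\TT \circ *})^{\fs}\bigr)_1$ does not exist: writing $\tau = \fc_\TT \circ *$, the set of $\tau$-symmetric unitaries $\{u : u^\tau = u\}$ is not a group (since $\tau$ is an antihomomorphism, $(uv)^\tau = v^\tau u^\tau = vu \ne uv$ in general), and it is not stable under ordinary conjugation either: for $v^\tau = v$ and $u^\tau = u$ one gets $(vuv^*)^\tau = v^* u v$, not $vuv^*$. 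The action that does preserve $\tau$-symmetric elements is the congruence action $u \mapsto v\,u\,v^\tau$ of the \emph{full} unitary group (not of the $\tau$-fixed elements), so the stabilizer--orbit machinery has to be run for that action, not for a conjugation action of a nonexistent subgroup. Merely invoking the polar identity $\lvert x^*\rvert = u\lvert x\rvert u^*$ and ``Michael's theorem on a convex fixed set'' does not repair this.

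The paper's resolution is to place (ii) in Wood's framework by doubling: set $A = M_{2n}(\TT)$, introduce the real structure $\tilde{\fc}_\TT = \Ad_{\tilde{\fC}}$ and the element $e = \diag(\sqrt{-1}\,1_n, -\sqrt{-1}\,1_n)$, and identify $U_n(\TT)^{\fc_\TT\circ *}$ with the skew-adjoint unitaries of the real Banach $*$-algebra $A^{\tilde{\fc}_\TT}$ that anticommute with $e$, via $a \mapsto \left(\begin{smallmatrix} 0 & -a^* \\ a & 0\end{smallmatrix}\right)$. Under this identification the congruence action becomes the honest conjugation action $T \mapsto TsT^*$ of the genuine group $U(A^{\tilde{\fc}_\TT})^{\fe}$, Lemma~$4.2$ of \cite{Wood} gives the homeomorphism $\bigl(U(A^{\tilde{\fc}_\TT})^{\fe}/(U(A^{\tilde{\fc}_\TT})^{\fe})^{\fs}\bigr)_1 \cong U_n(\TT)^{\fc_\TT\circ *}_u$, and Michael's selection theorem applies to the quotient of an actual Banach Lie group by a subgroup. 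You would need to supply this (or an equivalent) device to make case (ii) go through; the rest of your argument is sound.
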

\begin{proof}
(i) and (iii) are real analogues of (\ref{fib1}) and (\ref{fib2}).
(iv) follows from the skew-adjoint analogue of (iii), since multiplication by $\sqrt{-1}$ provides the homeomorphism
from $(U_n(\TT)^\sa)^{-\fc_\TT}$ to $(U_n(\TT)^\ska)^{\fc_\TT}$.

For (ii), we put unitaries preserving these real structures in the framework of Wood \cite{Wood}.
Let $A = M_{2n}(\TT)$ and set $U(A) = U_{2n}(\TT)$.
Let
\begin{equation*}
	e = 
	\left( 
    \begin{array}{cc}
           \sqrt{-1} \cdot 1_n&0\\
           0&-\sqrt{-1} \cdot 1_n
    \end{array}
\right), \ \
	\tilde{\fC} = 
	\left( 
    \begin{array}{cc}
           0& -\fC \cdot 1_n \\
           \fC \cdot 1_n &0
    \end{array}
\right),
\end{equation*}
and let $\tilde{\fc}_\TT \colon A \to A$ be a real linear automorphism of order two given by $\tilde{\fc}_\TT(a) = \tilde{\fC} a \tilde{\fC}^*$.
Its fixed point set $A^{\tilde{\fc}_\TT}$ is a unital real Banach $*$-algebra containing $e$.
Let $\fe$ be an operator on $A^{\tilde{\fc}_\TT}$ given by the conjugation of $e$.
Let us consider the space $(U(A^{\tilde{\fc}_\TT})^{\ska})^{-\fe}$, that is, the elements in $A = M_{2n}(\TT)$ which are skew-adjoint unitary, commute with $\tilde{\fC}$ and anti-commute with $e$.
We have the following identification,
\begin{equation*}
	U_n(\TT)^{\fc_\TT \circ *} \overset{\cong}{\longrightarrow} (U(A^{\tilde{\fc}_\TT})^{\ska})^{-\fe}, \ \ a \mapsto
\left( 
    \begin{array}{cc}
           0& -a^* \\
           a&0
    \end{array}
\right).
\end{equation*}
Let $u \in U_n(\TT)^{\fc_\TT \circ *}$ and
$s = \begin{pmatrix}
0 & -u^*\\
u & 0
\end{pmatrix} \in (U(A^{\tilde{\fc}_\TT})^{\ska})^{-\fe}$.
We consider an operator $\fs$ on $U(A^{\tilde{\fc}_\TT})^{\fe}$ given by the conjugation of $k$.
By Lemma~$4.2$ of \cite{Wood}, we have the following homeomorphism,
\begin{equation*}
	\bigl( U(A^{\tilde{\fc}_\TT})^{\fe} / (U(A^{\tilde{\fc}_\TT})^{\fe})^\fs \bigl)_{1} \overset{\cong}{\longrightarrow} (U(A^{\tilde{\fc}_\TT})^{\ska})^{-\fe}_s \cong U_n(\TT)^{\fc_\TT \circ *}_u,
\end{equation*}
given by $[T] \mapsto TsT^*$.
We also have a similar homeomorphism for the algebra $C(\T)$, and as in the proof of Lemma~\ref{fiblem}, we obtain that the map
\begin{equation*}
	\sigma \colon U_n(\TT)^{\fc_\TT \circ *}_u \to U_n(C(\T))^{\fc_\T \circ *}_{\sigma(u)}
\end{equation*}
is a fiber bundle and the result follows.
\end{proof}

For the proof of Theorem~\ref{mainfamilyKO}, we replace $n_i$, $I^{(i)}$ and $GL_n^{(i)}(\C)$ in the proof of Theorem~\ref{mainfamily} with $n_{r,i}$, $I^{(i)}_r$ and $GL_{r,n}^{(i)}(\C)$ in Table~\ref{table1} and obtain the following commutative diagram:
\begin{equation*}
\vcenter{
\xymatrix{
KO_i(\Szi \bm{\otimes} C(X), \tau_\mathcal{S} \bm{\otimes} \tau_\zeta) \ar[d]_{\psi} \ar[rrd]^{\partial^{\text{qT}}} & &\\
KR^{-i+1}\bigl(([-2,2],\{ \pm 2 \}) \times \T \times X, \id \times c \times \zeta \bigl) \bigl/\Ker(\alpha) \ar[rr]^{\hspace{3cm}\bar{\alpha}} & & KR^{-i+1}(X, \zeta) \\
KR^{-i}(\tilde{\SSS}^3 \times X, \nu \times \zeta) \ar[u]^{\phi} \ar[rru]_{\beta} & &
}}
\end{equation*}
where the map $\bar{\alpha}$ is induced from the map $\alpha$, defined as the map that makes the following diagram commutative:
\begin{equation*}
\vcenter{
\xymatrix{
KR^{-i}(\T \times X, c \times \zeta) \oplus KR^{-i}(\T \times X, c \times \zeta) \ar[d]_{\partial^{\text{pair}}} \ar[rd]^{\hspace{3mm} \partial^{\text{T}} \oplus - \partial^{\text{T}}}& \\
KR^{-i+1}\bigl(([-2,2],\{ \pm 2 \}) \times \T \times X, \id \times c \times \zeta \bigl) \ar[r]^{\hspace{2cm}\alpha} & KR^{-i+1}(X, \zeta)
}}
\end{equation*}
Theorem~\ref{mainfamilyKO} is proved in a parallel way as Theorem~\ref{mainfamily} by using real $K$-theory in place of complex $K$-theory.

\section{Gapped Topological Invariants Related to Corner States}
\label{Sect.6}
In this section, we discuss some bulk-edge gapped Hamiltonians on a lattice with a codimension-two corner or hinge.
By using Theorem~\ref{mainfamily} and Theorem~\ref{mainfamilyKO}, we provide more geometric way to formulate a relation between topological invariants for such gapped Hamiltonians and corner/hinge states than that in \cite{Hayashi2,Hayashi4}.

We consider a translation invariant Hamiltonian on the lattice $\Z^n$ of the following form:
\begin{equation*}
	H \colon l^2(\Z^n, \C^N) \to l^2(\Z^n, \C^N), \ \ H = \sum_{\text{finite}} a_{j_1 \cdots j_n} S_1^{j_1} \cdots S_n^{j_n},
\end{equation*}
where $a_{j_1 \cdots j_n} \in M_N(\C)$, $S_j$ is the shift operator in the $j$-th direction and the subscript {\em finite} means that $a_{j_1 \cdots j_n} = 0$ except for finitely many $(j_1, \ldots, j_n) \in \Z^n$.
We assume that $H$ is self-adjoint and consider such Hamiltonians in each of the ten Altland--Zirnbauer classes.
In classes A, AI and AII, we further assume that the spectrum of the bulk Hamiltonian is not contained in $\R_{>0}$ and $\R_{<0}$.
Through the Fourier transform, the bulk Hamiltonian corresponds to a hermitian matrix-valued function $H \colon \T^n \to M_N(\C)^\sa$ on the $n$-dimensional Brillouin torus $\T^n$.
We write $(z_1, \ldots, z_n)$ for an element in $\T^n$.
Corresponding to our finite hopping range condition, each entry of this matrix consists of a $n$-variable Laurent polynomial; therefore, the bulk Hamiltonian $H$ correspond to a $n$-variable rational matrix function on $\T^n$.
We next introduce our models for two edges and the corner.
Note that, for each $\bz = (z_3, \ldots, z_n) \in \T^{n-2}$, the matrix function $H(\bz)$ on $\T^2$ is a two-variable rational matrix function.
Let $H^0(\bz) = T^0_{H(\bz)}$ and $H^\infty(\bz) = T^\infty_{H(\bz)}$ be the associated half-plane Toeplitz operators, and let $H^{0,\infty}(\bz) = T^{0,\infty}_{H(\bz)}$ be the associated quarter-plane Toeplitz operator.
That is, for models of two edges, we consider the restrictions of our bulk Hamiltonian onto half-spaces $\Z \times \Z_{\geq 0} \times \Z^{n-2}$ and $\Z_{\geq 0} \times \Z \times \Z^{n-2}$, and for the model of codimension-two right angle corner, we consider the restriction onto the lattice $(\Z_{\geq 0})^2 \times \Z^{n-2}$, where we assume the Dirichlet boundary condition.
We assume that, for any $\bz \in \T^{n-2}$, half-plane Toeplitz operators $H^0(\bz)$ and $H^\infty(\bz)$ are invertible.
Under this assumption, $H(\bz)$ is also invertible and $H^{0,\infty}(\bz)$ is Fredholm.
Therefore, we assume that our model Hamiltonians for the bulk and two edges that makes the corner are gapped.
Under this assumption, we discuss a relation between some gapped topological invariant and corner states.
As in \cite{Hayashi4}, the family of self-adjoint Fredholm operators $\{ H^{0,\infty}(\bz) \}_{\bz \in \T^{n-2}}$ defines an element of the complex $K$-group $K^{-i+1}(\T^{n-2})$ for classes A and AIII, or the $KR$-group $KR^{-i+1}(\T^{n-2}, c^{n-2})$ for classes AI, BDI, D, DIII, AII, CII, C and CI of some degree $i$ corresponding to its Altland--Zirnbauer class $\spadesuit$ as indicated in Table~\ref{label}.
\begin{table}
\caption{$i$ and $\mathcal{H}^E$ for each of the Altland--Zirnbauer classes $\spadesuit$}
\label{label}
\centering
\begin{tabular}{|c||c|c|c|c|c|c|c|c|c|c|} \hline
  $\spadesuit$ & $\A$ & $\AIII$ & $\AI$ & $\BDI$ & $\DD$ & $\DIII$ & $\AII$ & $\CII$ & $\CC$ & $\CI$ \\ \hline \hline
  $i$ & $0$ & $1$ & $0$ & $1$ & $2$ & $3$ & $4$ & $5$ & $6$ & $-1$ \\ \hline
  $\mathcal{H}^E$ & $H^E \oplus 1_N$ & $h^E$ & $H^E \oplus 1_N$ & $h^E$ & $H^E$ & $h^E$ & $H^E \oplus 1_N$ & $h^E$ & $H^E$ & $h^E$ \\ \hline
\end{tabular}
\end{table}
We write $\I_{\mathrm{Gapless}}^{\spadesuit}(H)$ for this element of the $K$-group.
If $\I_{\mathrm{Gapless}}^{\spadesuit}(H)$ is non-trivial, there exist topological corner/hinge states.
For classes AIII, BDI, DIII, CII and CI where the Hamiltonians preserve chiral symmetry, a Hamiltonian $H$ anti-commute pointwise with the chiral symmetry operator $\Pi$ and can be represented by the off-diagonal form
$H = \begin{pmatrix}
0 & h^*\\
h & 0
\end{pmatrix}$.
This $h$ is also a nonsingular $n$-variable rational matrix function on $\T^n$.
As in \cite{Hayashi4}, this $H$ or $h$ preserves the symmetries of Boersema--Loring's formulation of complex or real $K$-theory groups.
Under our assumption, by using matrix factorizations (Proposition~\ref{exthf} and Proposition~\ref{frlemma4}), $H$ or $h$ on $\T^n$ is extended to a nonsingular matrix-valued continuous map
$H^E \colon \tilde{\SSS}^3 \times \T^{n-2} \to GL_N(\C)^\sa$ or $h^E \colon \tilde{\SSS}^3 \times \T^{n-2} \to GL_\frac{N}{2}(\C)$.
Corresponding to its Altland--Zirnbauer class, this matrix function define the following element of the complex or real $K$-group which is a gapped topological invariants for our bulk-edge gapped system:

\begin{definition}\label{def6.1}
In classes $\spadesuit=$ A and AIII, we define,
\begin{equation*}
	\I_{\mathrm{Gapped}}^{\spadesuit}(H) = [\mathcal{H}^E] \in K^{-i}(\tilde{\SSS}^3 \times \T^{n-2}).
\end{equation*}
In classes $\spadesuit=$  AI, BDI, D, DIII, AII, CII, C and CI, we define,
\begin{equation*}
	\I_{\mathrm{Gapped}}^{\spadesuit}(H) = [\mathcal{H}^E] \in KR^{-i}(\tilde{\SSS}^3 \times \T^{n-2}, \nu \times c^{n-2}),
\end{equation*}
where $i$ and $\mathcal{H}^E$ are as indicated in Table~\ref{label}.
\end{definition}

As in \cite{Hayashi2,Hayashi4}, from the pair of invertible half-space operators $H^0$ and $H^\infty$, we can define an element of the $K$-group $K_i(\Szi \bm{\otimes} C(\T^{n-2}))$ or $KO_i(\Szi \bm{\otimes} C(\T^{n-2}), \tau_\mathcal{S} \bm{\otimes} \tau_{c^{n-2}})$, and $\I_{\mathrm{Gapless}}^{\spadesuit}(H)$ is the image of this element through the boundary map $\partial^{\text{qT}}$ of the long exact sequence of $K$-theory for \CA s or $KO$-theory for \TA s associated with the extension (\ref{exttens1}) or (\ref{exttens2}).
Therefore, by Theorem~\ref{mainfamily} and Theorem~\ref{mainfamilyKO}, we obtain the following result.
\begin{theorem}\label{thm6.2}
In classes $\spadesuit= \A$ and $\AIII$, we have
\begin{equation*}
	\beta(\I_{\mathrm{Gapped}}^{\spadesuit}(H)) = \I_{\mathrm{Gapless}}^{\spadesuit}(H) \in K^{-i+1}(\T^{n-2}),
\end{equation*}
In classes $\spadesuit= \AI, \BDI, \DD, \DIII, \AII, \CII, \CC$ and $\CI$, we have,
\begin{equation*}
	\beta(\I_{\mathrm{Gapped}}^{\spadesuit}(H)) = \I_{\mathrm{Gapless}}^{\spadesuit}(H) \in KR^{-i+1}(\T^{n-2}, c^{n-2}),
\end{equation*}
where $i$ and $\mathcal{H}^E$ are as indicated in Table~\ref{label}.
\end{theorem}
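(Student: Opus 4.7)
The plan is to recognize the statement as a direct application of Theorem~\ref{mainfamily} in the complex classes A and AIII and of Theorem~\ref{mainfamilyKO} in the eight real Altland--Zirnbauer classes, taking $X = \T^{n-2}$ as the parameter space (with trivial involution in the complex case and with $c^{n-2}$ in the real case). The key steps are: verify the hypotheses of those theorems for the family of two-variable symbols coming from the bulk Hamiltonian, identify the two $K$-theoretic objects appearing in Theorem~\ref{thm6.2} with the corresponding classes in the statements of Theorem~\ref{mainfamily} and Theorem~\ref{mainfamilyKO}, and keep track of the degree shifts arising from chiral symmetry.

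First I would set $f(\bz) = H(\bz)$ in the non-chiral classes A, AI, D, AII, C and $f(\bz) = h(\bz)$ in the chiral classes AIII, BDI, DIII, CII, CI, so that $f \colon \T^2 \times \T^{n-2} \to GL_M^{(i)}(\C)$ for the appropriate degree $i$ from Table~\ref{label}. The finite hopping range assumption guarantees that, for each fixed $\bz$, $f(\bz)$ is a two-variable rational matrix function of $(z_1, z_2)$. The bulk-edge gappedness assumption means that $H^0(\bz)$ and $H^\infty(\bz)$ are invertible for all $\bz$, so by Douglas--Howe the quarter-plane Toeplitz operator $T^{0,\infty}_{f(\bz)}$ is Fredholm (and self-adjoint when $i$ is even). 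In classes A, AI and AII one applies the stabilization $\oplus 1_N$ as indicated in Table~\ref{label} to fit into the framework preceding Theorem~\ref{mainfamily} where essential spectra avoid $\R_{>0}$ and $\R_{<0}$.

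Second I would check that in each real class, $f$ is $\Z_2$-equivariant with target the appropriate space from Table~\ref{table1}, which is precisely the framework set up in Section~\ref{Sect.5} following Boersema--Loring. Proposition~\ref{exthf} (complex case) and Proposition~\ref{frlemma4} (real case) then produce, via matrix factorization, the extension $f^E \colon \tilde{\SSS}^3 \times \T^{n-2} \to GL_M^{(i)}(\C)$ which by construction represents $\I_{\mathrm{Gapped}}^{\spadesuit}(H)$ in the appropriate $K$-group per Definition~\ref{def6.1}. On the other side, by the identifications already used in \cite{Hayashi2, Hayashi4}, the family $\{T^{0,\infty}_{f(\bz)}\}_{\bz \in \T^{n-2}}$ represents $\I_{\mathrm{Gapless}}^{\spadesuit}(H)$ in $K^{-i+1}(\T^{n-2})$ or $KR^{-i+1}(\T^{n-2}, c^{n-2})$.

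Once these identifications are made, Theorem~\ref{mainfamily} in the complex cases and Theorem~\ref{mainfamilyKO} in the real cases yield $\beta([f^E]) = [T^{0,\infty}_f]$, which is the desired equality. The only point requiring care is alignment of the Boersema--Loring degree $i$ with the chiral versus non-chiral distinction: when $\Pi$ is present, one works with the off-diagonal block $h$, which is an $i$-odd invertible rather than a $2i$-even self-adjoint invertible, accounting for the degree shift $-1$ between $H^E$ and $h^E$ entries in Table~\ref{label}. This bookkeeping is exactly what makes the $\mathcal{H}^E$ in Table~\ref{label} land in the $K$-group where Theorem~\ref{mainfamily}/Theorem~\ref{mainfamilyKO} can be applied with matching degree, and apart from this no substantive obstacle remains; the theorem is a corollary of the index-theoretic results of Sections~\ref{Sect.4} and \ref{Sect.5}.
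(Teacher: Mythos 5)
Your proposal is correct and follows essentially the same route as the paper: both identify $\I_{\mathrm{Gapped}}^{\spadesuit}(H)$ with $[\mathcal{H}^E]$ via Definition~\ref{def6.1}, identify $\I_{\mathrm{Gapless}}^{\spadesuit}(H)$ with $[T^{0,\infty}_{\mathcal{H}}]$ (equivalently, the image of the pair of invertible half-plane operators under $\partial^{\mathrm{qT}}$), and then invoke Theorem~\ref{mainfamily} (classes A, AIII) or Theorem~\ref{mainfamilyKO} (the eight real classes) with $X=\T^{n-2}$. The hypothesis checks you list (rationality from finite hopping range, Fredholmness via Douglas--Howe, the $\oplus 1_N$ stabilization in classes A, AI, AII, and the chiral degree bookkeeping from Table~\ref{label}) are exactly the content the paper relies on.
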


Theorem~\ref{thm6.2} provides a geometric formulation for a relation between a gapped topological invariant and corner states in \cite{Hayashi2,Hayashi4},
though, since our three sphere $\tilde{\SSS}^3$ is not smooth as the boundary of $\D^2 \times \D^2$,
an integration formula for numerical gapped invariants, like integration of the Berry curvature for the first Chern number, is still missing.
At this stage, we simply note the following understanding of numerical gapped invariants for two-dimensional class AIII systems with a corner and three-dimensional class A systems with a hinge.

\begin{example}\label{2DAIII}
For a two-dimensional class AIII bulk-edge gapped Hamiltonian on the lattice $\Z_{\geq 0} \times \Z_{\geq 0}$,
our extension of the (off-diagonal part of the) bulk Hamiltonian defines an element $\I_{\mathrm{Gapped}}^{\AIII}(H) = [h^E] \in K^{-1}(\tilde{\SSS}^3)$.
By Corollary~\ref{maincorollary}, the three-dimensional winding number $W_3(h^E)$ of $h^E$ is the same as the Fredholm index $\ind T^{0,\infty}_h = \mathrm{Tr}(\Pi \lvert_{\Ker H^{0, \infty}}) = \I_{\mathrm{Gapless}}^{\AIII}(H)$ and accounts for topological corner states.
The two-variable rational matrix function $f$ in Example~\ref{example} provides an example in this class and corresponds to Benalcazar--Bernevig--Hughes' two-dimensional model of a second-order topological insulator \cite{BBH17a} as discussed in \cite{Hayashi3}.
\end{example}

\begin{example}\label{3DA}
For a three-dimensional class A bulk-edge gapped Hamiltonian on the lattice $\Z_{\geq 0} \times \Z_{\geq 0} \times \Z$,
our gapped topological invariant is $\I_{\mathrm{Gapped}}^{\A}(H) = [H^E \oplus 1_N] \in K^0(\tilde{\SSS}^3 \times \T)$.
Since $H^E$ is a continuous family of self-adjoint invertible matrices, we define a complex vector bundle $E$ on  $\tilde{\SSS}^3 \times \T$ whose fiber at $(z,w,t) \in \tilde{\SSS}^3 \times \T$ is
\begin{equation*}
	 E_{(z,w,t)} = \bigcup_{\mu < 0} \Ker (H^E(z,w,t) - \mu).
\end{equation*}
This vector bundle $E$ is an extension of the Bloch bundle since $H^E$ is an extension of the bulk Hamiltonian $H$.
By Theorem~\ref{mainfamily} and the results in \cite{Hayashi2}, the minus of the pairing of second Chern class of this extended Bloch bundle $E$ with fundamental class of $\tilde{\SSS}^3 \times \T$ is the same as the spectral flow of the family of self-adjoint Fredholm operators $\{ H^{0,\infty}(t) \}_{t \in \T}$, therefore accounts for the number of topological hinge states\footnote{For its proof, we take a deformation of our three sphere $\tilde{\SSS}^3$ to the unit three sphere $\SSS^3$ in $\C^2$. For two- and four-dimensional unit balls in $\C$ and $\C^2$, we consider spin$^c$ structures they inherit as subspaces of Euclidean spaces. We equip $\T$ and $\SSS^3$ their boundary spin$^c$ structures and take the product spin$^c$ structure on $\SSS^3 \times \T$. Then the result follows from Atiyah--Singer's index formula for twisted spin$^c$ Dirac operators \cite{AS68}.}.
\end{example}

Summarizing, we consider a gapped translation invariant single-particle Hamiltonian of finite hopping range on the lattice $\Z^n$ in each of the ten Altland--Zirnbauer classes.
We use Gohberg--Kre{\u \i}n theory to factorize the bulk Hamiltonian on the Brillouin torus about two variables $z_1$ and $z_2$.
For this purpose, there is a relevant algorithm since our bulk Hamiltonian corresponds to a multivariable rational matrix function on the torus $\T^n$ \cite{GK58r, CG81, GKS03}.
If all of the partial indices of right matrix factorizations are trivial (equivalently, if compressions of our bulk Hamiltonians onto two half-spaces $\Z \times \Z_{\geq 0} \times \Z^{n-2}$ and $\Z_{\geq 0} \times \Z \times \Z^{n-2}$ are invertible), we define two topological invariants:
One is defined through the restriction of the bulk Hamiltonian onto the lattice $(\Z_{\geq 0})^2 \times \Z^{n-2}$ assuming the Dirichlet boundary condition which provides a (family of) Fredholm operator(s) and its $K$-class $\I_{\mathrm{Gapless}}^{\spadesuit}(H)$ account for topological corner/hinge states.
The other is defined as the $K$-class $\I_{\mathrm{Gapped}}^{\spadesuit}(H)$ of the extension of the bulk Hamiltonian onto $\tilde{\SSS}^3 \times \T^{n-2}$ obtained through matrix factorizations, which is our gapped topological invariant (Definition~\ref{def6.1}).
There is a relation between these two topological invariants (Theorem~\ref{thm6.2}), therefore, corresponding to the gapped topological invariant $\I_{\mathrm{Gapped}}^{\spadesuit}(H)$, corner states appear.

\subsection*{Acknowledgments}
This work was supported by JSPS KAKENHI (Grant Nos. JP17H06461, JP19K14545) and JST PRESTO (Grant No. JPMJPR19L7).


\end{document}